\newtheorem{theorem}{Theorem}[section]
\newtheorem{lemma}[theorem]{Lemma}
\newtheorem{proposition}[theorem]{Proposition}
\newtheorem{corollary}[theorem]{Corollary}
\newtheorem{fact}[theorem]{Fact}
\newtheorem{remark}[theorem]{Remark}
\theoremstyle{definition}
\newtheorem{definition}[theorem]{Definition}
\newcommand{\ep}{\varepsilon}
\newcommand{\vf}{\varphi}
\newcommand{\cv}{\mathrm{cv}}
\newcommand{\conv}{\mathrm{conv}}
\newcommand{\dist}{\mathrm{dist}}
\newcommand{\R}{\mathbb{R}}
\newcommand{\N}{\mathbb{N}}
\newcommand{\rd}{{\mathbb R}^d}
\newcommand{\diam}{{\rm diam}\,}
\newcommand{\Crit}{{\rm Crit}}
\newcommand{\BT}{{BT_{\frac12}}}
\newcommand{\GB}{\cal G_B}
\newcommand{\card}{{\rm card}}
\def \cal{\mathcal}
\begin{document}                                                 
    \title[Critical values of distance functions]{Smallness of the set of critical values of distance functions in two-dimensional
 Euclidean and Riemannian spaces}                                 
    \author{Jan Rataj}
    \author{Lud\v ek Zaj\'\i\v cek}
    \address{Charles University, Faculty of Mathematics and Physics, Sokolovsk\'a 83,
186 75 Praha 8, Czech Republic}
    \email{rataj@karlin.mff.cuni.cz}
    \email{zajicek@karlin.mff.cuni.cz}
		\thanks{The research was supported by the Czech Science Foundation, Project No.~18-11058S.}
		\keywords{distance function, distance sphere, critical point, critical value, Minkowski content, Riemannian manifold}
		\subjclass{57N40,  53B21}
		\begin{abstract}
We study how small is the set of critical values of the distance function from a compact (resp. closed) set in the plane or in a connected complete two-dimensional Riemannian manifold. We show that for a compact set, the set of critical values is compact
and Lebesgue null (which is a known result) and that it has ``locally'' (away from $0$) bounded sum of square roots of lengths of gaps (components of the complement). In the planar case, these conditions of local smallness are shown to be optimal. These  results improve and generalize those of Fu (1985) and of our earlier paper from 2012. We also find an optimal condition for the smallness of the whole set of critical values of a planar compact set.
    \end{abstract}
				           
\maketitle

\section{Introduction}
If $X$ is a metric space and $\emptyset \neq F \subset X$ a closed set, we denote by 
$$d_F:=\dist(\cdot,F)$$ 
the \emph{distance function} to $F$ and by
$$S_r(F):= \{x \in X:\ d_F(x)=r\},\quad r>0,$$ 
the \emph{distance spheres} (called also $r$-boundaries) of $F$ (see \cite{Fe}). Distance spheres and their properties (in $\R^d$ and some more general spaces)  were investigated in a number of articles (see, e.g,  \cite{Fe}, \cite{Fu}, \cite{RZ1}, \cite{RZ2}, \cite{BMO}; for other references see \cite{RZ1}). 

If $X=\R^d$, $d=2,3$, then  for almost all $r>0$, $S_r(F)$ is either empty or a Lipschitz $(d-1)$-dimensional manifold, see Fu \cite{Fu} (where it is factually proved that it is a ``locally semiconcave surface''; for details see \cite{RZ1}).
Moreover, Fu proved in \cite{Fu} that ``for almost all $r>0$'' can be improved in $\R^2$. 
 To describe shortly these results, we introduce the following notation.

For a closed $\emptyset \neq F\subset \R^d$, we denote by $\cv(d_F)$ the set of all \emph{critical values} of $d_F$ (see Definition \ref{critreg} below) and by $T_F$ ($L_F$) the set of all
 $r>0$ for which $S_r$ is nonempty and it is not a topological (Lipschitz, resp.) $(d-1)$-dimensional manifold. A well-known fact is that
$T_F\subset L_F\subset\cv(d_F)$ (cf.\ \eqref{tlcv}). Hence, any result on the smallness of $\cv(d_F)$ implies the corresponding result on the smallness of $L_F$ and $T_F$.

Fu (factually) proved that if $\emptyset \neq F\subset \R^2$ is closed then $\cal \cal H^{1/2}(\cv(d_F))=0$. We showed in earlier papers that the same is true in two-dimensional Riemannian manifolds, Alexandrov spaces and some two-dimensional Banach (=Minkowski) spaces \cite{RZ1} and on two-dimensional convex surfaces \cite{RZ2}.

Another result of Fu in \cite{Fu} was that if $\emptyset \neq F \subset \R^2$ is compact and $\ep>0$, then  $\cv(d_F) \cap [\ep,\infty)$ is a compact set of entropy (= upper Minkowski) dimension at most $\frac 12$.  

In order to formulate our results, we introduce the following notation (see Definition~\ref{bano}). Given a compact set $K\subset\R$ and $\alpha>0$, the \emph{degree}-$\alpha$ \emph{gap sum} of $K$ is defined as $G_\alpha(K):=\sum_{I\in{\mathcal G}_K}|I|^\alpha$, where ${\mathcal G}_K$ is the set of all bounded components of $\R\setminus K$. We say that $K$ is a \emph{$BT_\alpha$-set} if it is Lebesgue null and $G_\alpha(K)<\infty$. Note that each $BT_\alpha$-set has zero $\alpha$-dimensional Minkowski content (see \eqref{nulmin}).

The following is an improvement of the above mentioned Fu's result. In fact, it is a characterization of the smallness of $\cv(d_F) \cap [\ep,\infty)$ for $\emptyset \neq F\subset\R^2$ compact.

\begin{theorem}\label{odrn}
Let $\ep>0$ and $A \subset [\ep, \infty)$. Then the following properties are equivalent.
\begin{enumerate}
\item[(i)]\ $A\subset L_F$ for some compact $F \subset \R^2$.  
\item[(ii)]\ $A\subset \cv(d_F)$ for some compact $F \subset \R^2$.  
\item[(iii)]\ $\overline{A}$ is a $\BT$ set.
\end{enumerate}
\end{theorem}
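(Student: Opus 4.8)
The implication (i) $\Rightarrow$ (ii) is immediate from the inclusion $L_F\subset\cv(d_F)$ recorded in \eqref{tlcv}; it therefore suffices to prove (ii) $\Rightarrow$ (iii) (``smallness'') and (iii) $\Rightarrow$ (i) (``construction''), which closes the loop (i) $\Rightarrow$ (ii) $\Rightarrow$ (iii) $\Rightarrow$ (i).

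\emph{Proof of (ii) $\Rightarrow$ (iii).} Let $A\subset\cv(d_F)$ with $F\subset\R^2$ compact. Every critical point $x$ of $d_F$ lies in the convex hull of its set $N(x)$ of nearest points of $F$, hence in $\conv F$, so $\cv(d_F)\subset[0,\diam F]$; moreover $\cv(d_F)\cap[\ep,\diam F]$ is compact, being the image under the continuous proper map $d_F$ of the closed set of critical points at level $\ge\ep$. Thus $\overline A\subset\cv(d_F)\cap[\ep,\infty)$, and since the $\BT$ property is inherited by closed subsets (Lebesgue nullity trivially; for the gap sum because every gap of a subset is, up to a Lebesgue-null set, a disjoint union of gaps of the superset, and $t\mapsto\sqrt t$ is subadditive), it is enough to prove that $\cv(d_F)\cap[\ep,\infty)$ is a $\BT$ set. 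Its Lebesgue nullity is part of Fu's theorem $\mathcal H^{1/2}(\cv(d_F))=0$ quoted above, so everything comes down to the estimate $G_{1/2}\bigl(\cv(d_F)\cap[\ep,\infty)\bigr)<\infty$. For this I would pass to the ridge (central) set $\Sigma=\{x:\card N(x)\ge2\}$: a value $r\ge\ep$ is critical iff it is attained at a point $x\in\Sigma$ with $x\in\conv(N(x))$, and computing the derivative of $d_F$ along $\Sigma$ shows that on $\Sigma$ this is equivalent to vanishing of the tangential derivative; hence $\cv(d_F)\cap[\ep,\infty)$ is contained in the critical-value set of $d_F|_\Sigma$. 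In the plane $\Sigma\cap\{d_F\ge\ep\}$ has, inside any ball, finite length, and the restriction of $d_F$ to it is a one-dimensional function with derivative of bounded variation — the bounded variation coming from the fact that $d_F$ is semiconcave on $\{d_F\ge\ep\}$, being the square root of the semiconcave function $d_F^2=\inf_{p\in F}|\cdot-p|^2$. This reduces the matter to a one-variable lemma: a $1$-Lipschitz function on an interval of length $\ell$ whose derivative has total variation $\le v$ has a $\BT$ critical-value set with $G_{1/2}\le C\sqrt{\ell v}$. That lemma is proved by a Cauchy--Schwarz argument: a gap $(a,b)$ of the critical-value set is produced by an excursion of $g'$ away from $0$ over a domain interval $J$ of height $h:=\sup_J|g'|$, during which $g$ changes by $b-a\le h\,|J|$, whence $\sum\sqrt{b-a}\le\bigl(\sum h\bigr)^{1/2}\bigl(\sum|J|\bigr)^{1/2}$, the first factor bounded by $\mathrm{Var}(g')$ and the second by $\ell$. \emph{The main obstacle is the structure theory of the ridge set $\Sigma$ in the plane} — finiteness of its length away from $F$, its regularity and decomposition, and the control of its branch points — which is exactly where two-dimensionality is essential and where most of the technical work resides.

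\emph{Proof of (iii) $\Rightarrow$ (i).} Given a $\BT$ set $K=\overline A\subset[\ep,M]$ with gaps $(a_j,b_j)$, $\sum_j\sqrt{b_j-a_j}<\infty$, I would run the previous analysis backwards. First pick a $1$-Lipschitz, semiconcave function $g$ on an interval of finite length whose critical-value set is exactly $K$: take $g$ nondecreasing, with derivative $0$ on a Lebesgue-null Cantor-type copy of $K$, and with a single ``bump'' of height $O(\sqrt{b_j-a_j})$ over a domain interval of length $\sqrt{b_j-a_j}$ accounting for the $j$-th gap, so that $g$ increases by exactly $b_j-a_j$ there; then the total domain length and $\mathrm{Var}(g')$ are both $O\bigl(\sum\sqrt{b_j-a_j}\bigr)<\infty$, and no spurious critical values appear. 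Next realise $g$ essentially as $d_F|_\Sigma$ by taking $\Sigma$ a gently curved arc and $F$ a pair of curves offset from $\Sigma$, arranged so that at the ridge points lying over $K$ the offset feet really are the nearest points of $F$; this already gives $K\subset\cv(d_F)$. Finally, to place $K$ in $L_F$ rather than merely in $\cv(d_F)$, perturb $F$ locally near each of those ridge points so that the corresponding sphere $S_r(F)$ acquires a branch point or a cusp instead of just a corner, with the perturbations shrinking fast enough that their accumulation also forces $S_r$ to fail to be a topological manifold at the limit values of $K$. \emph{The main obstacle here is packing all these features into one bounded set while keeping every $r\in K$ — the limit values included — inside $L_F$}; the finiteness of $G_{1/2}(K)$ is precisely the budget that makes the packing possible, and since the Cauchy--Schwarz step in the previous part is sharp in the worst case, this construction also shows that the exponent $\tfrac12$ (equivalently, the $\BT$ condition) cannot be weakened, which is the optimality asserted in the introduction.
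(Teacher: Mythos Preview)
Your (i)$\Rightarrow$(ii) is fine. In both remaining directions, however, the ``main obstacles'' you flag are real gaps, and the paper resolves them by a different route that sidesteps them entirely.

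For (ii)$\Rightarrow$(iii) you route through the ridge set $\Sigma=\{x:\card N(x)\ge 2\}$ and need (a) that $\Sigma\cap\{d_F\ge\ep\}$ has finite $\mathcal H^1$-length, (b) that $\Sigma$ is regular enough to carry a tangential derivative of $d_F$, and (c) that this derivative has bounded variation. For a general compact $F$ none of this is available: the non-differentiability set of a semiconcave function on $\R^2$ is covered by \emph{countably} many Lipschitz curves, not finitely many, its $\mathcal H^1$-measure in a bounded region need not be finite, and branch points of $\Sigma$ obstruct any clean one-variable reduction. The paper never touches $\Sigma$. Instead it works directly with $\Crit(d_F)$, which is much smaller: using semiconcavity of $d_F$ and a quantitative bound on the size of the Clarke subdifferential (Lemmas~\ref{critdc2}, \ref{poklip}), it shows (Lemma~\ref{nakoulib}) that $\Crit(d_F)\cap\overline B(a,\delta/3)$ is covered by at most $288$ Lipschitz graphs. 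Ferry's inequality \eqref{ferryn}, $|d_F(v)-d_F(w)|\le(2\min d_F)^{-1}\|v-w\|^2$ for critical $v,w$, then replaces your bounded-variation argument: on each Lipschitz graph a map satisfying a quadratic H\"older bound has image with finite $G_{1/2}$ (Lemma~\ref{obra2}). Your Cauchy--Schwarz lemma is morally the same estimate, but the reduction to it via $\Sigma$ does not go through.

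For (iii)$\Rightarrow$(i) your plan---build a one-variable $g$, realise it as $d_F|_\Sigma$ for a pair of offset curves, then perturb near each critical level to break the Lipschitz-manifold property---leaves unverified both that the offsets are genuine nearest points and that the perturbations handle accumulation points of $K$. The paper's construction (Proposition~\ref{konstr}) is explicit and needs no perturbation: with $b=\max K$, set $g(y)=\sqrt{2b}\,G_{1/2}(K\cap[\min K,y])$ and $F=\{(g(y),\pm y):y\in K\}$. Subadditivity of $\sqrt{\cdot}$ gives $v-u\le(2b)^{-1}(g(v)-g(u))^2$, which forces $\pi_F((g(v),0))=\{(g(v),\pm v)\}$, so $(g(v),0)\in S_v(F)$; a short elementary argument then shows $S_v(F)$ admits no bilipschitz chart near that point. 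This lands every $v\in K$---limits included---in $L_F$ at once.
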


The main tools from the proof are results on the critical points of DC functions, an inequality due to Ferry (see \eqref{ferryn}) and a construction based again on the Ferry's paper \cite{Fe} (see Proposition~\ref{konstr}). Up to the Ferry's inequality, our approach is completely independent of \cite{Fu} and, hence, provides also an alternative proof of the two-dimensional results of Fu \cite{Fu}.

Using similar methods, we also obtain a result on the smallness of the set of critical points of $d_F$ (Proposition~\ref{mkrb}).

Our second main result concerns the smallness of the whole set of critical values $\cv(F)$ for a compact planar set $F$. The degree-$\frac 12$ gap sum of $\cv(F)$ can be infinite, but, a more careful quantitative local study of the degree-$\frac 12$ gap sum of $\cv(F)$ (Lemma~\ref{nakouli}) makes it possible to obtain the following (again optimal) result:

\begin{theorem}\label{chardon}
Let $A\subset (0,\infty)$. Then the following conditions are equivalent.
\begin{enumerate}
\item[(i)] $A\subset  L_F$ for some compact set $F \subset \R^2$.
\item[(ii)] $A\subset \cv(d_F)$ for some compact set $F \subset \R^2$.
\item[(iii)]  $\overline{A}$ is bounded, Lebesgue null and  
$$\int_0^\infty G_{1/2}(\overline{A}\cap[r,\infty)) \sqrt{r}\, dr<\infty.$$
\end{enumerate}
\end{theorem}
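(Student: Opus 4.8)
The strategy is to prove the cycle (i)$\Rightarrow$(ii)$\Rightarrow$(iii)$\Rightarrow$(i). The first implication is immediate, since $L_F\subset\cv(d_F)$ (cf.\ \eqref{tlcv}).

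For (ii)$\Rightarrow$(iii), assume $A\subset\cv(d_F)$ with $F\subset\R^2$ compact. Then $\overline A\subset\cv(d_F)$ because $\cv(d_F)$ is closed, and since $\cv(d_F)$ is known to be compact and Lebesgue null, so is $\overline A$. The quantitative part rests on the following elementary fact, which I would record as a lemma: \emph{if $B\subset C$ are compact subsets of $\R$ and $C$ is Lebesgue null, then $G_{1/2}(B)\le G_{1/2}(C)$}. Indeed, a bounded component $I$ of $\R\setminus B$ lies in $[\min C,\max C]$; if $I$ misses $C$ it is contained in a single bounded component of $\R\setminus C$, whereas if $I$ meets $C$ then, as $|I\cap C|=0$, $I$ is up to a null set the disjoint union of the bounded components $I_1,I_2,\dots$ of $\R\setminus C$ that it contains, so $|I|^{1/2}=\bigl(\sum_j|I_j|\bigr)^{1/2}\le\sum_j|I_j|^{1/2}$ by subadditivity of $t\mapsto t^{1/2}$; since the components of $\R\setminus C$ used in this way are pairwise distinct, summation yields the claim. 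Applying it with $B=\overline A\cap[r,\infty)$ and $C=\cv(d_F)\cap[r,\infty)$ gives $G_{1/2}(\overline A\cap[r,\infty))\le G_{1/2}(\cv(d_F)\cap[r,\infty))$ for every $r>0$, so (iii) follows by integrating the bound supplied by Lemma~\ref{nakouli} (combined, if that lemma is stated in local form, with a covering of a bounded neighbourhood of $F$ by finitely many balls).

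The heart of the matter is (iii)$\Rightarrow$(i). Because every quantity in (iii) transforms predictably under dilations we may assume $\overline A\subset(0,1]$, and we set $A_k:=\overline A\cap[2^{-k-1},2^{-k}]$ for $k\ge 0$. The lemma above (applicable since $\overline A$ is Lebesgue null) also makes $r\mapsto G_{1/2}(\overline A\cap[r,\infty))$ nonincreasing, and a comparison of $\overline A\cap[2^{-k-1},\infty)$ with $A_k$ (all but at most one bounded gap of $A_k$ is again a bounded gap of $\overline A\cap[2^{-k-1},\infty)$) gives $G_{1/2}(\overline A\cap[r,\infty))\ge G_{1/2}(A_k)-(2^{-k-1})^{1/2}$ for $r\le 2^{-k-1}$. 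Integrating this inequality against $\sqrt r\,dr$ on $(0,2^{-k-1})$ and using the finiteness of the integral in (iii) forces $G_{1/2}(A_k)<\infty$, so each $A_k$ is a $\BT$ set. By Proposition~\ref{konstr} (applied to $A_k$, or to the dilated set $2^kA_k\subset[\tfrac12,1]$ followed by rescaling) there is a compact $F_k$ with $A_k\subset L_{F_k}$, and since the Ferry-type construction producing it is localized at the scale $\sup A_k$, one may take $\diam F_k=O(2^{-k})$.

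It remains to glue the pieces. Fixing a unit vector $e$ and translating $F_k$ so that it lies in a ball $B(c\,2^{-k}e,\;c'\,2^{-k})$ with $c$ large compared with $c'$, the translated clusters occupy pairwise disjoint regions whose mutual distances, and also the distances of each of them to the origin, exceed $2^{-k}$ at scale $k$; moreover they accumulate only at the origin, so $F:=\{0\}\cup\bigcup_kF_k$ is compact. For $r\in A_k$ choose, using $r\in L_{F_k}$, a point $x_0\in S_r(F_k)$ at which $S_r(F_k)$ is not a Lipschitz $1$-manifold; since $d_{F_k}(x_0)=r\le 2^{-k}$, the point $x_0$ lies so close to $F_k$ that $d_F\equiv d_{F_k}$ on a neighbourhood of it, whence $S_r(F)$ coincides with $S_r(F_k)$ near $x_0$ and so is not a Lipschitz $1$-manifold there, i.e.\ $r\in L_F$. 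Thus $A=\bigcup_kA_k\subset L_F$, closing the cycle. I expect the main obstacle to be exactly this construction: one needs Proposition~\ref{konstr} to furnish realizations of controlled size, so that the countably many pieces can be packed to accumulate at a point without interfering with each other's critical points, and one then has to verify that forming the union neither destroys the critical values carried by the individual $A_k$ nor spoils compactness.
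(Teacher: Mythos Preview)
Your (iii)$\Rightarrow$(i) argument has a genuine gap: the claim ``one may take $\diam F_k=O(2^{-k})$'' is false in general. The Ferry construction of Proposition~\ref{konstr} produces, by Remark~\ref{Fmala}, a set contained in $\overline B(0,b+\sqrt{2b}\,G_{1/2}(K))$, so $\diam F_k$ is controlled not just by $\sup A_k\le 2^{-k}$ but also by $2^{-k/2}G_{1/2}(A_k)$. The integral hypothesis in (iii) is equivalent (via Lemma~\ref{sumint} with $\alpha=3/2$) to $\sum_k 2^{-3k/2}G_{1/2}(A_k)<\infty$, which allows $G_{1/2}(A_k)$ to be as large as (say) $2^{3k/2}/k^2$; then $\diam F_k\gtrsim 2^{-k/2}\cdot 2^{3k/2}/k^2=2^k/k^2\to\infty$, so the pieces cannot accumulate at a point and $F$ is not compact. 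Rescaling does not help, since $G_{1/2}$ scales by the square root of the dilation factor.

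The paper repairs this with the additional chopping step of Lemma~\ref{rozsek}: each $A_k$ is decomposed into $p_k\le G_{1/2}(A_k)/\sqrt{\delta_k}+1$ compact pieces, each with gap sum at most $2\sqrt{\delta_k}$. Now every piece yields an $F_{k,p}\subset B(0,5\delta_k)$, and the total area of the buffered balls $B(c_{k,p},9\delta_k)$ satisfies $\sum_{k,p}\lambda_2(B(0,9\delta_k))\lesssim\sum_k G_{1/2}(A_k)\delta_k^{3/2}+\sum_k\delta_k^2<\infty$, so the (countably many) balls can be packed disjointly into a bounded set and the closure of the union is compact. Without this chopping, your one-set-per-dyadic-scale packing simply cannot be made to work.

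Your (ii)$\Rightarrow$(iii) sketch is also incomplete: ``integrating the bound supplied by Lemma~\ref{nakouli}'' together with ``a finite cover'' does not by itself give the integral bound, because the number of balls of radius $\sim\delta_n/8$ needed to cover the annulus $\{\delta_{n+1}\le d_F\le\delta_n\}$ grows as $n\to\infty$. The missing quantitative ingredient is Lemma~\ref{peter}, which controls this growth by $\sum_n p_n\delta_n^2<\infty$; combined with the $\sqrt{\delta_n}$ bound from Lemma~\ref{nakouli} this yields $\sum_n\delta_n^{3/2}G_{1/2}(\cv(d_F)\cap[\delta_{n+1},\delta_n])<\infty$, which is the form of (iii) actually used in the paper (Proposition~\ref{Pchardon}).
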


As a corollary, we obtain that the $\frac 45$-dimensional Minkowski content of $\cv(d_F)$ vanishes for nonempty compact planar sets $F$ (but the degree-$\frac 45$ sum of $\cv(d_F)$ can be infinite, in which case the upper Minkowski dimension of $\cv(d_F)$ equals $\frac 45$), see Theorem~\ref{45}.

For $T_F$, we do not know a characterization of smallness, but we present an example of a compact planar set $F$ for which the Hausdorff dimension of $T_F\cap[1,2]$ equals $\frac 12$ (Theorem~\ref{TF}).

We also consider a two-dimensional complete Riemannian manifold $M$. If $\emptyset \neq F\subset M$ is closed, the critical values of $d_F$ are defined as those of $d_F\circ \varphi$ for any chart $\varphi$ of $M$ (see Definition~\ref{D_crit}) and the sets $\cv(d_F)$, $L_F$ and $T_F$ are defined exactly as in the Euclidean case. Again, the inequalities $T_F\subset L_F\subset\cv(d_F)$ hold (see \eqref{tlcv_riem}). With the help of a generalized Ferry's inequality (Proposition~\ref{P1}) we show that a slightly weaker implication than (ii)$\implies$(iii) from Theorem~\ref{odrn} is still true:

\begin{theorem}\label{proko}
 If $X$ is a connected complete two-dimensional Riemmanian manifold, $\emptyset \neq F\subset X$ a compact set, and  $0<\ep <K$,
 then $\cv(d_F) \cap [\ep,K]$ is a  $\BT$ set. In particular, both  $\cv(d_F) \cap [\ep,K]$ and $L_F \cap [\ep,K]$
 have zero $1/2$-dimensional  Minkowski content.
\end{theorem}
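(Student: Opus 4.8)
The plan is to show that $P:=\cv(d_F)\cap[\ep,K]$ is a $\BT$ set, i.e.\ that it is Lebesgue null and has finite degree-$\frac12$ gap sum $G_{1/2}(P)<\infty$; the two ``in particular'' assertions then follow at once, since a $\BT$ set has zero $\frac12$-dimensional Minkowski content by \eqref{nulmin}, and $L_F\cap[\ep,K]\subset\cv(d_F)\cap[\ep,K]=P$ by \eqref{tlcv_riem} (Minkowski content being monotone under inclusion).

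First I would fix the geometry. By the Hopf--Rinow theorem the set $C:=\{x\in X:\ \ep\le d_F(x)\le K\}$ is closed and bounded, hence compact, so $\mathrm{vol}(C)<\infty$; moreover every critical point of $d_F$ whose critical value lies in $[\ep,K]$ belongs to $C$. Covering $C$ by finitely many charts in which $d_F$ is a DC function and using the upper semicontinuity of the subdifferential entering the definition of a critical point, one sees that the set of critical points of $d_F$ is closed, so $P$, being the image under $d_F$ of the (compact) set of critical points contained in $C$, is compact. Lebesgue nullity of $P$ is already known, since $\mathcal H^{1/2}(\cv(d_F))=0$ by \cite{RZ1}. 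Finally, the coarea formula applied to the $1$-Lipschitz function $d_F$ on $X$, together with $|\nabla d_F|\le 1$ a.e., gives
$$\int_\ep^K\mathcal H^1(S_r(F))\,dr\le\mathrm{vol}(C)<\infty.$$

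The heart of the argument is a per-gap estimate. Let $\{I_m\}$, $I_m=(a_m,b_m)$, be the (countably many, pairwise disjoint) bounded components of $\R\setminus P$; each $I_m\subset[\ep,K]$, so $a_m$ is a critical value of $d_F$ while every value in $(a_m,b_m)$ is regular. Fixing a critical point $x_m$ with $d_F(x_m)=a_m$, reading $d_F$ in a chart around $x_m$ as a DC function, and invoking the structure results for critical points of DC functions, I would produce a controlled ``Ferry ball'' $B(x_m,\rho_m)$ on which the distance spheres $S_r(F)$ with $a_m<r<b_m$ are forced to be long in a quantitative way. Feeding this into the generalized Ferry inequality of Proposition~\ref{P1} — which plays here the role of the Euclidean inequality \eqref{ferryn} in the proof of Theorem~\ref{odrn}, and whose constants are uniform over $C$ thanks to the two-sided control of the metric and curvature on the compact set $C$ and to $\ep\le d_F\le K$ — I expect to obtain a constant $c=c(X,F,\ep,K)>0$, independent of $m$, with
$$\int_{a_m}^{b_m}\mathcal H^1(S_r(F))\,dr\ge c\,|I_m|^{1/2}$$
for every $m$. (The exponent $\frac12$ ultimately reflects the parabolic relation between the spatial scale of the critical configuration at $x_m$ and the length of the interval of values it controls.) Summing over $m$, using that the $I_m$ are disjoint and contained in $[\ep,K]$,
$$c\,G_{1/2}(P)=c\sum_m|I_m|^{1/2}\le\sum_m\int_{a_m}^{b_m}\mathcal H^1(S_r(F))\,dr\le\int_\ep^K\mathcal H^1(S_r(F))\,dr<\infty,$$
whence $G_{1/2}(P)<\infty$ and $P$ is a $\BT$ set, which finishes the proof.

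The routine parts of this program are the Hopf--Rinow compactness, the coarea estimate, and the elementary bookkeeping with gaps (for instance that a closed subset, and a finite union inside $[\ep,K]$, of $\BT$ sets is again a $\BT$ set). The genuine difficulty — and the step I expect to be the main obstacle — is the per-gap estimate: transporting the Euclidean ``Ferry ball'' argument underlying Theorem~\ref{odrn} to the manifold with the radius $\rho_m$ and the length lower bound for $S_r(F)$ inside $B(x_m,\rho_m)$ controlled purely in terms of $\ep$, $K$ and the geometry of $C$, the dependence on the (possibly very small) gap length $|I_m|$ being allowed only through the displayed factor $|I_m|^{1/2}$. Supplying this uniform control is precisely what Proposition~\ref{P1} is designed to do, so the work is to set up the reduction to a chart carefully enough that Proposition~\ref{P1} and the DC critical-point results apply with all constants depending only on the admissible data.
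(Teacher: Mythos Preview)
Your proposal has a genuine gap at its core. The entire argument rests on the per-gap estimate
\[
\int_{a_m}^{b_m}\mathcal H^1(S_r(F))\,dr\ge c\,|I_m|^{1/2},
\]
which you do not prove and which, as far as I can see, does not follow from the tools you cite. Proposition~\ref{P1} (and its Corollary~\ref{C_Ferry}) is an inequality between pairs of \emph{critical points}: it says $|d_F(x)-d_F(y)|\lesssim\dist(x,y)^2$ for $x,y\in\Crit(d_F)$. It gives no information about the length of the level sets $S_r$ at \emph{regular} values $r\in(a_m,b_m)$, and I see no mechanism by which it could force a lower bound on $\mathcal H^1(S_r)$ near a single critical point $x_m$. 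Your heuristic about a ``parabolic relation'' does not lead to the right exponent either: if the relevant spatial scale near $x_m$ is $\rho$, then Ferry suggests a value interval of order $\rho^2$ and level sets of length $\sim\rho$ inside $B(x_m,\rho)$, giving an integral of order $\rho^3\sim|I_m|^{3/2}$, not $|I_m|^{1/2}$. You also appear to believe that the Euclidean Theorem~\ref{odrn} is proved via such a level-set/coarea argument; it is not, so you are not generalizing an existing proof but proposing a new one whose key step is missing.

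The paper's proof avoids level sets entirely and is quite short once Lemma~\ref{posc} is available. That lemma shows that for every $z\in M\setminus F$ there is $r>0$ with $d_F(\Crit(d_F)\cap\overline B(z,r))$ a $\BT$ set: one passes to a bi-Lipschitz chart where $d_F$ becomes a locally DC function with $(d^*)'_+(x,v)\le -c<0$ in some direction at each point (from \eqref{cosinus} and \eqref{Riem_m}); Lemma~\ref{pokr} then covers the critical set locally by finitely many Lipschitz graphs, the Ferry inequality (Corollary~\ref{C_Ferry}) makes $d_F$ ``square-Lipschitz'' on critical points, and Lemma~\ref{obra2} turns this into a $G_{1/2}$ bound on each graph. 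Theorem~\ref{proko} then follows immediately: cover the compact set $C=(d_F)^{-1}([\ep,K])$ by finitely many such balls and apply Corollary~\ref{adbt}. Thus the Ferry inequality is used to control the \emph{image of the critical set under $d_F$} directly, not the geometry of regular level sets; this is the idea you are missing.
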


We also present related results on the smallness of the sets of critical values for the distance function from closed (not necessarily compact) subsets of $\R^2$ (Theorem~\ref{euprouz}) and of a two-dimensional complete Riemannian manifold (Theorem~\ref{prouz}).

In the Preliminaries, we collect and prove some results about the gap sums of compact subsets of $\R$  (Subsection~2.2) and critical points of Lipschitz and DC functions (Subsection~2.3). Section~3 contains our main results in $\R^2$, and in Section~4 we treat the two-dimensional Riemannian manifolds.

\section{Preliminaries}

\subsection{Basic definitions}
The integer part of $x \in \R$ is denoted by $\lfloor x \rfloor$.
In any vector space $V$, we use the symbol $0$ for the zero element. 
The symbol $B(x,r)$ (resp.\ $\overline B(x,r)$) will denote the open (resp.\ closed) ball
  with center $x$ and radius $r$. The boundary of a set $M$ is denoted by $\partial M$.

In the Euclidean space $\R^d$  the norm is denoted by $\|\cdot\|$ and the  scalar product  by $\langle \cdot,\cdot\rangle$. We denote $S_{\R^d}:= \{x\in \R^d:\ \|x\|=1\}$. The Lebesgue measure in $\R^d$ is denoted by $\lambda_d$. If $I\subset \R$ is an interval, we set $|I|:= \lambda_1(I)$.

A mapping is called $K$-Lipschitz if it is Lipschitz with a (not necessarily minimal) constant $K$.
A bijection $f$ is called bilipschitz if both $f$ and $f^{-1}$ are Lipschitz. 

We say that a metric space $X$ is a {\it $k$-dimensional topological (resp. Lipschitz) manifold}
 if for every $a \in X$ there exists an open neighbourhood $U$ of $a$ and a homeomorphism
 (resp. a bilipschitz  homeomorphism) of $U$ on an open subset of $\R^k$.

We will need also the following special notation.
\begin{definition}\label{lipgr}
Let $A\subset \R^2$. We will say that $A$ is a {\it Lipschitz graph} if there exists a 
Lipschitz function $g:\R \to \R$ such that
$$  A= \{ (x,g(x)):\ x \in \R \}\quad \text{or}\quad A= \{ (g(y), y):\ y \in \R \}.$$
 If $g$ is $M$-Lipschitz, we say that $A$ is an {\it $M$-Lipschitz graph}.
\end{definition}

If $f$ is a real function defined on an open set $G \subset \R^d$, then
the {\it directional derivative} and the {\it one-sided  directional derivative} 
    of $f$ at $x\in G$ in the direction $v \in \R^d$ are defined by 
   $$ f'(x,v) := \lim_{t \to 0} \frac{f(x+tv)-f(x)}{t} \quad \text{and}\quad f'_+(x,v) := \lim_{t \to 0+} \frac{f(x+tv)-f(x)}{t}.$$

	Let $f$ be a real function defined on an open convex set $G \subset \R^d$. Then we say
	 that $f$ is a {\it DC function}, if it is the difference of two convex functions. Special DC
	 functions are semiconvex and semiconcave functions. Namely, $f$ is a {\it semiconvex} (resp.
	 {\it semiconcave}) function, if  there exist $a>0$ and a convex function $g$ on $G$ such that
	$$   f(x)= g(x)- a \|x\|^2\quad (\text{resp.}\ \  f(x)= a \|x\|^2 - g(x)),\quad x \in G.$$
	
	 For $s\ge 0$, denote (following \cite{Ma}) by $\cal H^{s}$ the (nonnormalized) $s$-dimensional Hausdorff measure, and $\dim_H A$ denotes the Hausdorff dimension of $A$.

	Let
	 $d_A:= \dist(\cdot,A)$ be the  distance function from the set $A$, and 
	\begin{equation}\label{para}
A_{\ep}:=\{ z\in\rd:\, d_A(z)\leq \ep\}
\end{equation}
 be
 its $\ep$-parallel set.

 The \emph{$s$-dimensional lower and upper Minkowski content} of a nonempty bounded set $A\subset\R^d$ is defined by
\[
\underline{\cal M}^s(A):=\liminf_{\ep\to 0+} \frac{\lambda_d(A_{\ep})}{\ep^{d-s}} \quad \text{ and } \quad
\overline{\cal M}^s(A):=\limsup_{\ep\to 0+} \frac{ \lambda_d(A_{\ep})  }{\ep^{d-s}}.
\]
(Note that other definitions in the literature differ by normalization factors only, and so  our results
 hold also under these definitions.)

If $\underline{\cal M}^s(A)=\overline{\cal M}^s(A)$, then the common value ${\cal M}^s(A)$ is refered to as the \emph{$s$-dimensional Minkowski content} of $A$. 
We denote by
\[
\underline{\dim}_M A:=\inf\{t\ge 0 : \underline{\cal M}^s(A)=0\}=\sup\{t\ge 0 :\underline{\cal M}^s(A)=\infty\}
\]
and
\[
\overline{\dim}_M A:=\inf\{t\ge 0 :\overline{\cal M}^s(A)=0\}=\sup\{t\ge 0 :\overline{\cal M}^s(A)=\infty\}
\]
the \emph{lower} and \emph{upper Minkowski dimension} of $A$.	

The \emph{packing dimension} of a set $A\subset\R^d$ can be defined by
\begin{equation}\label{dimp}
\dim_PA:=\inf\left\{ \sup_i\overline{\dim}_MA_i:\, A\subset\bigcup_{i=1}^\infty A_i\right\}
\end{equation}
(cf.\ \cite{Fa}*{Proposition~3.8}).
Recall that (see \cite{Ma}*{p.~79})
\begin{equation}\label{nuldim}
\cal{H}^s(A)=0\ \ \ \text{whenever}\ \ \ \cal{M}^s(A)=0
\end{equation}
and that (see \cite{Fa}*{Eq.~(3.29)})
\begin{equation}   \label{nedi}
\dim_H(A) \leq \dim_PA \leq \overline{\dim}_M A.
\end{equation}

Note also that, in contrast to the (upper, lower) Minkow\-ski dimension, the packing dimension is already stable with respect to countable unions, i.e.,
$$\dim_P\left(\bigcup_iA_i\right)=\sup_i\dim_PA_i,\quad A_i\subset\R^d,\quad i=1,2,\dots$$
(see \cite{Fa}*{Eq.~(3.26)}).

	\subsection{Gap sums}

\begin{definition}\label{bano}
If  $B\subset \R$ is compact, by a \emph{ gap of $B$} we mean a bounded component of $\R \setminus B$
 and by $\cal G_B$ we denote the collection (possibly empty) of all gaps of $B$. 

If $\alpha> 0$, we define the \emph{degree-$\alpha$ gap sum} of $B$ as
$$  G_{\alpha} (B) = \sum_{I \in \GB}   |I|^{\alpha}.$$  

We will say that $B\subset \R$ is a \emph{$BT_{\alpha}$ set} if $B$ is  (possibly empty) compact,
 $\lambda(B) =0$ and  $G_{\alpha} (B) < \infty$.
\end{definition}

 The notations  $\cal G_B$ and $  G_{\alpha} (B)$ are taken from \cite{BN}.
 $BT_{\alpha}$ sets were (factually) first considered by  Besicovitch and Taylor in
 \cite{BT} and were used in \cite{Ka} and \cite{BN} in their study of optimal versions of Sard theorems
 for real $C^k$ and  $C^{k,s}$  functions of one variable.

There exist close connections between gap sums and upper Minkowski content and dimension.
 In \cite{Ha}, it is proved, that if $B\subset \R$ is a compact Lebesgue null set, then
\begin{equation}\label{hawk}
\overline{\dim}_M B  = \inf\{\alpha>0:\  G_{\alpha} (B)< \infty\}=: i(B).
\end{equation}
(This is the correct formulation of \cite{Ha}*{Theorem 3.1}, where the assumption of nullness of
 $B$ is forgotten but used in the proof.)

 The equality \eqref{hawk} was  proved independently
 in \cite{BN}; it is a part of \cite{BN}*{Theorem 1.2}, which contains also (as the implication  $(4) \Rightarrow (5)$) the following
 result.
\begin{equation}\label{nulmin}
 \text{If $B\subset \R$ is a $BT_{\alpha}$ set, then  ${\cal M}^{\alpha}(B)= 0$.}
\end{equation}
Note that the proof of \eqref{hawk} in \cite{Ha} is rather laconic and the proof of 
 \cite{BN}{Theorem~1.2} is rather indirect and  not detailed. So, since \eqref{hawk} and \eqref{nulmin}
 are important for us, we present for completeness their short detailed proofs.
\smallskip

 Since the case of a finite $B$ is trivial both in \eqref{hawk} and \eqref{nulmin}, we consider
further an infinite compact Lebesgue null set $B\subset \R$. Then the collection  $\cal G_B$
 of all gaps is infinite and their lengths may be arranged as a non-increasing sequence
 $(a_n), \ n=1,2,\dots$. For each  $0<r< a_1/2$, let $i=i(r)$ be such that
\begin{equation}\label{ri}
  a_{i+1} \leq 2r < a_i.
	\end{equation}
	It is easy to see (cf.\  \cite{Fa2}*{(3.17), p.~51}) that
	\begin{equation}\label{expl}
	 \lambda_1(B_r)= 2r + 2ri + \sum_{j=i+1}^{\infty} \, a_j,
	\end{equation}
  where $B_r$ is defined in \eqref{para}.
	If $B$ is as in \eqref{nulmin}, the sequence $((a_i)^{\alpha})$ is non-increasing
	 and $\sum (a_i)^{\alpha}$ converges; consequently by a well-known easy fact 
	(see, e.g., \cite{Br}*{p.~31}) we obtain
	 $i\cdot (a_i)^{\alpha} \to 0$. So, since  by  $ \eqref{ri}$ 
	$$   \frac{ir}{r^{1-\alpha}} = i r^{\alpha}  \leq  \frac{i\cdot (a_i)^{\alpha} }{2^{\alpha}},$$
	 we easily obtain
	\begin{equation}\label{prcl}
	\lim_{r\to 0+} \frac{ 2r + 2ir}{r^{1-\alpha}} =0.
	\end{equation}
		Further by \eqref{ri}
		$$
		D(r):= \frac{\sum_{j=i+1}^{\infty} \, a_j}{r^{1-\alpha}} \leq  2^{1-\alpha} \sum_{j=i+1}^{\infty} 
		 \frac{a_j}{(a_{i+1})^{1-\alpha}}
		\leq  2^{1-\alpha}\sum_{j=i+1}^{\infty} (a_j)^{\alpha},
$$
 and consequently  $\lim_{r\to 0+} D(r) =0$. Hence, using \eqref{expl} and \eqref{prcl}, we obtain
 that $\lim_{r\to 0+}  \lambda_1(B_r)/r^{1-\alpha} =0$ and so ${\cal M}^{\alpha}(B)= 0$.

Thus \eqref{nulmin} is proved, which immediately implies that $\overline{\dim}_M B 
\leq i(B)$. So, to prove \eqref{hawk}, it suffices to prove the opposite implication.
 To this end, consider arbitrary  $\alpha$, $\beta$ with $\overline{\dim}_M B < \alpha < \beta$.  Then $\overline{\cal M}^{\alpha}(B)= 0$
 and thus \eqref{expl} gives that  $\lim_{r\to 0+}  ir/r^{1-\alpha} = \lim_{r\to 0+} i r^{\alpha}  =0$
 and, consequently, \eqref{ri} implies that  $\lim_{i \to \infty} i  2^{-\alpha} (a_{i+1})^\alpha  = 0$.
 Thus, for all sufficiently large $i$, we have 
$$(a_{i+1})^\alpha \leq \frac{1}{i}\quad \text{and so}\quad (a_{i+1})^\beta \leq \frac{1}{i^{\beta/\alpha}},$$
and consequently  $i(B)\leq \beta$. By the choice of $\alpha$ and $\beta$, we obtain
 $i(B) \leq  \overline{\dim}_M B$.

\medskip
We will need also the following easy facts on gap sums.

\begin{lemma}\label{mongap}
Let  $\emptyset \neq K_1 \subset K_2 \subset \R$ be compact Lebesgue null sets and $0< \alpha<1$. Then   $G_{\alpha} (K_1) \leq G_{\alpha} (K_2)$.
 \end{lemma}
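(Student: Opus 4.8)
The strategy is to compare the gap sums gap-by-gap. Since $K_1\subset K_2$ we have $\R\setminus K_2\subset\R\setminus K_1$, so each gap of $K_2$ is contained in a (single, well-defined) component of $\R\setminus K_1$, which is either a gap of $K_1$ or one of the two unbounded components. I will simply \emph{discard} the gaps of $K_2$ lying in the unbounded components and show that, already from those lying inside gaps of $K_1$, one recovers $G_\alpha(K_1)$. The only non-trivial analytic ingredient is the subadditivity of $t\mapsto t^\alpha$ for $0<\alpha<1$: for any finite or countable family of numbers $a_j\ge 0$,
\[
\sum_j a_j^{\alpha}\ \ge\ \Bigl(\sum_j a_j\Bigr)^{\alpha},
\]
which one sees by rescaling so that $\sum_j a_j=1$ and then using $a_j^\alpha\ge a_j$ (valid since $0\le a_j\le 1$ and $\alpha\le 1$). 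This is the only step where the hypothesis $\alpha<1$ is used.

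If ${\mathcal G}_{K_1}=\emptyset$ then $G_\alpha(K_1)=0$ and there is nothing to prove, so assume ${\mathcal G}_{K_1}\neq\emptyset$ and fix a gap $I=(p,q)\in{\mathcal G}_{K_1}$. Since $K_1$ is closed, its endpoints satisfy $p,q\in K_1\subset K_2$. I would then look at the open set $(p,q)\setminus K_2$ and write it as the disjoint union of its components, which are open intervals $(a,b)$; each such $(a,b)$ is a component of $(p,q)\setminus K_2$, and because $p,q\in K_2$ its endpoints $a,b$ must lie in $K_2$, so $(a,b)$ is in fact a bounded component of $\R\setminus K_2$, i.e.\ a gap of $K_2$ contained in $I$. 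Conversely, every gap of $K_2$ contained in $I$ is a component of $(p,q)\setminus K_2$. Hence these components are precisely the members of ${\mathcal G}_{K_2}(I):=\{J\in{\mathcal G}_{K_2}:\,J\subset I\}$, and, using $\lambda_1(K_2)=0$,
\[
\sum_{J\in{\mathcal G}_{K_2}(I)}|J|\ =\ \lambda_1\bigl((p,q)\setminus K_2\bigr)\ =\ (q-p)-\lambda_1\bigl(K_2\cap(p,q)\bigr)\ =\ |I|.
\]
Applying the subadditivity inequality to the family $\{|J|:J\in{\mathcal G}_{K_2}(I)\}$ yields $\sum_{J\in{\mathcal G}_{K_2}(I)}|J|^\alpha\ge |I|^\alpha$.

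To finish I would sum over all gaps of $K_1$. Distinct gaps of $K_1$ are disjoint, so a given $J\in{\mathcal G}_{K_2}$ belongs to ${\mathcal G}_{K_2}(I)$ for at most one $I\in{\mathcal G}_{K_1}$; thus the subfamilies ${\mathcal G}_{K_2}(I)$, $I\in{\mathcal G}_{K_1}$, are pairwise disjoint subfamilies of ${\mathcal G}_{K_2}$, and therefore
\[
G_\alpha(K_2)\ =\ \sum_{J\in{\mathcal G}_{K_2}}|J|^\alpha\ \ge\ \sum_{I\in{\mathcal G}_{K_1}}\ \sum_{J\in{\mathcal G}_{K_2}(I)}|J|^\alpha\ \ge\ \sum_{I\in{\mathcal G}_{K_1}}|I|^\alpha\ =\ G_\alpha(K_1),
\]
which is the claim. (Note the hypothesis $\lambda_1(K_1)=0$ is in fact not needed, being forced by $K_1\subset K_2$ and $\lambda_1(K_2)=0$.) There is no serious obstacle here; the only points needing a line of care are the verification that the components of $(p,q)\setminus K_2$ are genuine gaps of $K_2$ (where $p,q\in K_2$ is used) and the bookkeeping ensuring no gap of $K_2$ is counted twice.
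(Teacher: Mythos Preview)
Your proof is correct and follows essentially the same approach as the paper: for each gap $I=(p,q)$ of $K_1$ you identify the gaps of $K_2$ lying inside $I$ (the paper writes these as $\mathcal G_{K_2\cap\overline I}$), use $\lambda_1(K_2)=0$ to see their lengths sum to $|I|$, apply subadditivity of $t\mapsto t^\alpha$, and then sum over $I\in\mathcal G_{K_1}$. Your write-up is in fact slightly more explicit than the paper's in justifying why the components of $(p,q)\setminus K_2$ are genuine gaps of $K_2$ and why no gap of $K_2$ is counted twice.
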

\begin{proof}
For each  $J \in \cal G_{K_1}$, set  $\cal G^J:= \cal G_{K_2 \cap \overline{J}}$. Then clearly
 $\sum_{I \in \cal G^J} |I| = |J|$ and therefore the subadditivity of the function $\vf(t)= t^{\alpha}$  implies $\sum_{I \in \cal G^J} |I|^{\alpha} \geq |J|^{\alpha}$. Consequently
$$ G_{\alpha} (K_1)  = \sum_{J \in \cal G_{K_1}} |J|^{\alpha} \leq
\sum_{J \in \cal G_{K_1}} \sum_{I \in \cal G^J} |I|^{\alpha} \leq  G_{\alpha} (K_2).$$
\end{proof}

\begin{lemma}\label{aditgap}
Let  $A_1,\dots,A_k$ be compact subsets of an interval $[c,d]$ and $0< \alpha<1$. Then
$$  G_{\alpha} (A_1\cup\dots\cup A_k) \leq G_{\alpha}(A_1)+\dots+ G_{\alpha}(A_k) + (k-1) (d-c)^{\alpha}.$$
\end{lemma}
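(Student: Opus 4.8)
The plan is to reduce the $k$-fold statement to the case $k=2$ by induction, so the core of the work is to prove
$$G_\alpha(A\cup B)\le G_\alpha(A)+G_\alpha(B)+(d-c)^\alpha$$
for two compact subsets $A,B\subset[c,d]$. First I would set $C:=A\cup B$ and compare gaps of $C$ with gaps of $A$ and of $B$. The key observation is that every gap $I\in\cal G_C$ is contained in a gap of $A$ \emph{and} in a gap of $B$ (since $C\setminus A$ and $C\setminus B$ are open and $I$ misses $C$, hence misses $A$ and $B$), \emph{unless} $I$ is an "outer" gap, i.e.\ one of the at most two components of $\R\setminus C$ that are unbounded on one side relative to $A$ or $B$ — more precisely, a gap $I$ of $C$ might fail to sit inside a \emph{bounded} component of $\R\setminus A$ only when one endpoint of $I$ equals $\min A$ or $\max A$ (similarly for $B$). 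So partition $\cal G_C$ into: the (at most two) gaps $I$ whose closure meets $\{\min A,\max A\}$ but is not contained in any bounded component of $\R\setminus A$, and the rest; and similarly for $B$.

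The cleaner route, which I would actually carry out, is the following. For each bounded component $J$ of $\R\setminus A$, the gaps of $C$ contained in $\overline J$ are exactly the gaps of $C\cap\overline J=(A\cap\overline J)\cup(B\cap\overline J)=B\cap\overline J$ (plus possibly the two half-open pieces of $J$ at its ends, but those lie in no gap), and since $C\cap\overline J\supset\{\text{endpoints of }J\}$, these gaps have total length $\le|J|$; by subadditivity of $t\mapsto t^\alpha$ on $[0,\infty)$ their degree-$\alpha$ sum is at most... no — this goes the wrong way. Instead: among all gaps of $C$, each one is contained in a gap of $A$ or is one of the at most one leftmost/rightmost "transitional" gap. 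So I classify each $I\in\cal G_C$ by whether $\overline I\subset \overline J$ for some $J\in\cal G_A$; call these the $A$-inner gaps. For a fixed $J\in\cal G_A$, the $A$-inner gaps inside $\overline J$ are gaps of $B\cap\overline J$, and I claim their $\alpha$-sum is $\le G_\alpha(B\cap\overline J)$ plus at most one boundary term $|J|^\alpha\le(d-c)^\alpha$. Summing over $J\in\cal G_A$ double-counts nothing and yields $\sum_{I\ A\text{-inner}}|I|^\alpha\le G_\alpha(B)+(\text{one }(d-c)^\alpha)$, while the non-$A$-inner gaps of $C$ — there is at most one such, lying "around" $A$ since $C\supset A$ — contributes at most another $(d-c)^\alpha$. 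To avoid the nuisance of the two error terms, I would instead directly bound: every $I\in\cal G_C$ with $\overline I\not\ni \min A,\max A$ is $A$-inner, and among the $A$-inner ones, for each $J\in\cal G_A$ the ones inside $\overline J$ are gaps of $B\cap\overline J$, whose $\alpha$-sum, by Lemma~\ref{mongap} applied inside $\overline J$... but Lemma~\ref{mongap} compares nested sets, and $B\cap\overline J$ need not contain nor be contained in anything canonical. So the honest bound is $\sum_{I\subset\overline J,\ I\in\cal G_C}|I|^\alpha\le G_\alpha(B\cap\overline J)+|J|^\alpha$, the extra $|J|^\alpha$ accounting for the single gap of $B\cap\overline J$ that might have been "split" or that equals a piece of $J$ not present as a gap of $B$ on all of $\R$; then $\sum_{J\in\cal G_A}G_\alpha(B\cap\overline J)\le G_\alpha(B)$ by the same subadditivity computation as in Lemma~\ref{mongap}, and the sum of the extras $\sum_{J}|J|^\alpha=G_\alpha(A)$. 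Finally the at most two gaps of $C$ not captured this way (those touching $\min A$ or $\max A$) each have length $\le d-c$. Tallying gives $G_\alpha(A\cup B)\le G_\alpha(B)+G_\alpha(A)+2(d-c)^\alpha$ — one factor too many, so I must be more careful: the gap of $C$ touching $\min A$ is actually $A$-inner with respect to the \emph{unbounded} left component, hence by symmetry should be absorbed; the correct bookkeeping (treating $[c,d]$ as the ambient interval and noting $C\subset[\min A\wedge\min B,\max A\vee\max B]\subset[c,d]$, so there is exactly \emph{one} "outer" region on each side but its gap-in-$[c,d]$ has length $\le d-c$ and there is only one genuinely uncaptured gap) yields the single $(d-c)^\alpha$.

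Then the general case follows by induction on $k$: assuming the bound for $k-1$, write $A_1\cup\dots\cup A_k=(A_1\cup\dots\cup A_{k-1})\cup A_k$ and apply the two-set case together with the inductive hypothesis, getting a total error of $(k-2)(d-c)^\alpha+(d-c)^\alpha=(k-1)(d-c)^\alpha$.

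\emph{Main obstacle.} The delicate point is the precise accounting of the "boundary" gaps — the gaps of $A_1\cup\dots\cup A_k$ that straddle $\min A_i$ or $\max A_i$ and therefore are not contained in any bounded complementary component of some $A_i$ — so that the error accumulates as exactly $(k-1)(d-c)^\alpha$ and not more. I expect this to be handled by exploiting $0<\alpha<1$ (subadditivity of $t^\alpha$) consistently and by carefully choosing, for each gap $I$ of the union, a single $A_i$ inside whose closed complementary component $I$ sits, charging the unavoidable "leftover" to a bounded-length term dominated by $(d-c)^\alpha$; the inequality is not tight, so generous but correct estimates suffice.
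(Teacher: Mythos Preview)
Your reduction to the case $k=2$ by induction is correct and matches the paper. The difficulty is entirely in the two-set case, and there your argument has a concrete error.

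You claim the ``honest bound''
\[
\sum_{I\in\cal G_C,\ I\subset\overline J}|I|^\alpha\ \le\ G_\alpha(B\cap\overline J)+|J|^\alpha
\]
for each gap $J$ of $A$. This is false: take $A=\{0,3\}$, $B=\{1,2\}$, $C=\{0,1,2,3\}$, $J=(0,3)$, $\alpha=1/2$. The left side is $1+1+1=3$, the right side is $1+\sqrt 3<3$. The point is that adjoining the \emph{two} endpoints $a_J,b_J$ of $J$ to $B\cap\overline J$ can create \emph{two} new gaps, not one; the correct identity is
\[
\sum_{I\in\cal G_C,\ I\subset\overline J}|I|^\alpha\ =\ G_\alpha(B\cap\overline J)+(m_J-a_J)^\alpha+(b_J-M_J)^\alpha,
\]
where $m_J=\min(B\cap\overline J)$, $M_J=\max(B\cap\overline J)$. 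Summing this over $J$ gives $G_\alpha(B)$ plus a sum of boundary terms that you cannot simply dominate by $G_\alpha(A)+(d-c)^\alpha$; your subsequent bookkeeping (``there is only one genuinely uncaptured gap'') is handwaved and does not go through.

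The paper avoids all of this with a single clean device. Assume $\max(A_1\cup A_2)\in A_1$ and set $A_2^*:=A_2\cup\{d\}$. To each gap $(u,v)$ of $A_1\cup A_2$, assign a gap of $A_1$ or of $A_2^*$ that contains it, according to which set the \emph{left} endpoint $u$ lies in: if $u\in A_1\setminus A_2$ (and $v\in A_2\setminus A_1$), extend rightward to the next point of $A_1$; if $u\in A_2\setminus A_1$, extend to the next point of $A_2^*$; if $(u,v)$ is already a gap of $A_1$ or $A_2$, leave it alone. This assignment is injective because distinct gaps of $A_1\cup A_2$ have distinct left endpoints, and the map preserves left endpoints. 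Hence $G_\alpha(A_1\cup A_2)\le G_\alpha(A_1)+G_\alpha(A_2^*)\le G_\alpha(A_1)+G_\alpha(A_2)+(d-c)^\alpha$, since adding the single point $d$ to $A_2$ creates at most one new gap of length at most $d-c$. The trick of passing to $A_2^*$ is exactly what absorbs the stray boundary term you were unable to pin down.
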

\begin{proof}
Using induction, we see that it is sufficient to prove the case $k=2$.
 We can suppose that $\max(A_1 \cup A_2) \in A_1$. Set $A_2^*:= A_2 \cup \{d\}$.
Consider the decomposition  $\cal G_{A_1\cup A_2} = \cal G_{12} \cup \cal G_1 \cup \cal G_2$,
 where  
\begin{align*}
\cal G_{12}&:= \{(u,v) \in \cal G_{A_1\cup A_2}:\ (u,v) \in \cal G_{A_1}\cup \cal G_{A_2}\},\\
\cal G_1&:= \{(u,v) \in \cal G_{A_1\cup A_2}:\  u\in A_1\setminus A_2,\ v \in A_2\setminus A_1\},\\
\cal G_2&:= \{(u,v) \in \cal G_{A_1\cup A_2}:\  u\in A_2\setminus A_1,\ v \in A_1\setminus A_2\}.
\end{align*}
To each $(u,v) \in \cal G_{A_1\cup A_2}$ assign an interval $\vf((u,v))\in \cal G_{A_1}\cup 
\cal G_{A_2^*}$ containing $(u, v)$ in the following way:
$$\vf((u,v)):=\begin{cases}
(u,v)  &\text{if } (u,v) \in \cal G_{12},\\
(u, \min(A_1 \cap (v,b])) &\text{if } (u,v) \in \cal G_1, \\
(u, \min(A_2^* \cap (v,b])) &\text{if } (u,v) \in \cal G_2. 
\end{cases} $$
Since the mapping $\vf$ is clearly injective, we obtain
$$  G_{\alpha}(A_1 \cup A_2) \leq G_{\alpha}(A_1) + G_{\alpha}(A_2^*)  \leq G_{\alpha}(A_1) + G_{\alpha}(A_2) + (d-c)^{\alpha}.$$
\end{proof}

Lemmas~\ref{mongap} and \ref{aditgap} give:

\begin{corollary}\label{adbt}
Any compact subset of a finite union of $\BT$ sets is a $\BT$ set.
\end{corollary}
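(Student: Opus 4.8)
The plan is to verify, for a compact set $A\subset B_1\cup\dots\cup B_k$ with each $B_i$ a $\BT$ set, the three defining conditions of a $\BT$ set: compactness, Lebesgue nullity, and finiteness of $G_{1/2}$. Compactness of $A$ is part of the hypothesis. Since $\lambda_1(B_i)=0$ for every $i$, we have $\lambda_1(A)\le\lambda_1\bigl(\bigcup_{i=1}^k B_i\bigr)=0$. The case $A=\emptyset$ is trivial (the empty set is admitted as a $\BT$ set), so I would assume $A\neq\emptyset$, and it remains only to bound $G_{1/2}(A)$.

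Since each $B_i$ is compact, the set $B:=B_1\cup\dots\cup B_k$ is compact and hence contained in some bounded interval $[c,d]$; moreover $A\subset B\subset[c,d]$ and $\lambda_1(B)=0$. First I would apply Lemma~\ref{aditgap} with $\alpha=\tfrac12$ to the sets $B_1,\dots,B_k$, obtaining
$$ G_{1/2}(B)\le G_{1/2}(B_1)+\dots+G_{1/2}(B_k)+(k-1)(d-c)^{1/2}<\infty, $$
where finiteness is exactly the $\BT$-property of the $B_i$. Then, since $A$ and $B$ are nonempty compact Lebesgue null sets with $A\subset B$, Lemma~\ref{mongap} (with $\alpha=\tfrac12\in(0,1)$) gives $G_{1/2}(A)\le G_{1/2}(B)<\infty$. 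Hence $A$ is a $\BT$ set.

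There is essentially no obstacle: the corollary is a direct combination of the two preceding lemmas, exactly as the phrase preceding its statement indicates. The only points worth a moment's attention are that a finite union of compact sets is bounded (so that Lemma~\ref{aditgap}, which requires a common containing interval, is applicable) and that the empty set counts as a $\BT$ set, which disposes of the degenerate situation not covered by the nonemptiness hypothesis in Lemma~\ref{mongap}.
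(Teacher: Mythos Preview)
Your proof is correct and follows exactly the route the paper intends: apply Lemma~\ref{aditgap} to bound $G_{1/2}$ of the union, then Lemma~\ref{mongap} to pass to the compact subset. The paper itself gives no further detail beyond the sentence ``Lemmas~\ref{mongap} and \ref{aditgap} give,'' so your write-up is in fact more explicit than the original.
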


As in \cite{BN}, we will need an estimate of $G_{\alpha}(f(A))$, where $f(A)$ is compact and
 $f: A \to \R$ is a function with special properties. We could use \cite{BN}*{Theorem 2.1},
 but   Lemma \ref{obra} below which is  much easier is sufficient for our purposes.

\begin{lemma}\label{indu}
Let  $\alpha>0$, $F= \{a_1<a_2<\dots< a_n\} \subset \R$ and let $f: F \to \R$ be injective.
 Then  
\begin{equation}\label{obrkon}
G_{\alpha}(f(F)) \leq \sum_{i=1}^{n-1} |f(a_{i+1}) - f(a_i)|^{\alpha}.
\end{equation}
\end{lemma}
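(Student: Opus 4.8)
\textbf{Proof plan for Lemma~\ref{indu}.}
The plan is to prove the inequality by induction on $n$, the cardinality of $F$. For $n=1$ the left-hand side is $G_\alpha(f(F))=G_\alpha(\{f(a_1)\})=0$ (a single point has no gaps), and the empty sum on the right is also $0$, so the base case holds. The inductive idea is to remove one cleverly chosen point from $F$ and compare the gap sum of $f(F)$ with that of $f(F\setminus\{a_k\})$.

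The key step is the choice of which point to delete. Write $f(F)=\{b_1<b_2<\dots<b_n\}$ for the increasing rearrangement of the image, so that $G_\alpha(f(F))=\sum_{j=1}^{n-1}(b_{j+1}-b_j)^\alpha$. I would delete the point $a_k\in F$ whose image $f(a_k)$ is \emph{not} an endpoint of $f(F)$, i.e.\ such that $f(a_k)=b_j$ for some $j$ with $1<j<n$; such a $k$ exists as soon as $n\ge 3$ (and the case $n=2$ is immediate, since then the right-hand side is the single term $|f(a_2)-f(a_1)|^\alpha=G_\alpha(f(F))$). Removing $a_k$ merges the two adjacent gaps $(b_{j-1},b_j)$ and $(b_j,b_{j+1})$ of $f(F)$ into the single gap $(b_{j-1},b_{j+1})$ of $f(F\setminus\{a_k\})$, while all other gaps are unchanged; by subadditivity of $t\mapsto t^\alpha$ for $\alpha\le 1$ — wait, here we must be careful, since $\alpha>0$ is arbitrary. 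For $\alpha\ge 1$ subadditivity fails, but we still have the trivial bound $(b_{j-1}-b_j)^\alpha$ etc.; in fact what we need is only the easy inequality
$$G_\alpha(f(F)) - G_\alpha(f(F\setminus\{a_k\})) = (b_j-b_{j-1})^\alpha + (b_{j+1}-b_j)^\alpha - (b_{j+1}-b_{j-1})^\alpha,$$
and this difference is bounded above by $(b_j-b_{j-1})^\alpha+(b_{j+1}-b_j)^\alpha$, hence by $|f(a_k)-f(a_{k-1})|^\alpha + |f(a_{k+1})-f(a_k)|^\alpha$ where $a_{k-1},a_{k+1}$ are the neighbours of $a_k$ in $F$ — no, that is not automatic, because $b_{j-1}$ and $b_{j+1}$ need not be the images of $a_{k-1}$ and $a_{k+1}$.

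So the cleaner route is: do the induction but bound the \emph{increment} of the right-hand sum correctly. When we pass from $F$ to $F'=F\setminus\{a_k\}$ with $a_k$ an interior point of $F$ (i.e.\ $1<k<n$; we may always delete an interior point when $n\ge 3$, and separately handle whether its image is interior to $f(F)$), the right-hand sum changes by replacing the two terms $|f(a_k)-f(a_{k-1})|^\alpha$ and $|f(a_{k+1})-f(a_k)|^\alpha$ with the single term $|f(a_{k+1})-f(a_{k-1})|^\alpha$, a change of at most $|f(a_k)-f(a_{k-1})|^\alpha+|f(a_{k+1})-f(a_k)|^\alpha$ in absolute decrease when $\alpha\le 1$ but possibly an increase when $\alpha>1$. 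I expect the statement is really intended (and used) with $0<\alpha\le 1$, matching the $BT_\alpha$ setting where $0<\alpha<1$, so subadditivity of $t\mapsto t^\alpha$ is available; then deleting the interior point $a_k$ whose image is interior to $f(F)$ gives
$$G_\alpha(f(F))\le G_\alpha(f(F'))+|f(a_k)-f(a_{k-1})|^\alpha+|f(a_{k+1})-f(a_k)|^\alpha - |f(a_{k+1})-f(a_{k-1})|^\alpha$$
and by the inductive hypothesis $G_\alpha(f(F'))\le\sum'|f(a_{i+1})-f(a_i)|^\alpha$ where the primed sum runs over consecutive pairs in $F'$; combining, and using subadditivity once more to absorb the merged term, yields exactly \eqref{obrkon}. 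The case where every interior point of $F$ has an \emph{endpoint} of $f(F)$ as its image is the degenerate one and must be checked directly: then $f$ essentially alternates between the two extremes, and one sees the bound holds with room to spare.

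\textbf{Main obstacle.} The only real subtlety is bookkeeping: the increasing order on $F$ and the increasing order on $f(F)$ are unrelated (that is the whole point — $f$ is merely injective, not monotone), so when a point is removed from $F$ one must track carefully which gap of $f(F)$ disappears and express its length in terms of \emph{consecutive} values of $f$ along $F$. Handling this cleanly — and pinning down whether the intended range of $\alpha$ is $(0,1]$ so that subadditivity of $t^\alpha$ can be invoked for the merging step — is where the care is needed; the rest is a routine induction.
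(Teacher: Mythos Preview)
Your plan has a real gap, and it is precisely the ``bookkeeping'' issue you flag at the end but do not resolve. When you delete an interior point $a_k$, the change in $G_\alpha(f(F))$ involves the \emph{image-order} neighbours $b_{j-1},b_{j+1}$ of $f(a_k)=b_j$, while the change in the right-hand side involves the \emph{domain-order} neighbours $f(a_{k-1}),f(a_{k+1})$. You correctly observe these need not coincide, but your ``cleaner route'' then silently writes the displayed inequality as if they did. Nothing you propose bridges that mismatch, and subadditivity of $t\mapsto t^\alpha$ does not help here. In particular, your suspicion that the lemma is only meant for $0<\alpha\le 1$ is mistaken: it holds for every $\alpha>0$, and subadditivity is never needed.

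The paper's trick is to remove not an interior point but the \emph{last} point $a_n$. Then the right-hand side loses exactly the single term $|f(a_n)-f(a_{n-1})|^\alpha$, and one only has to show that the gap sum drops by at most this amount. If $f(a_n)=b_k$ with $1<k<n$, the change in gap sum is
\[
(b_k-b_{k-1})^\alpha+(b_{k+1}-b_k)^\alpha-(b_{k+1}-b_{k-1})^\alpha
\le \min\bigl((b_k-b_{k-1})^\alpha,(b_{k+1}-b_k)^\alpha\bigr),
\]
which uses only monotonicity of $t\mapsto t^\alpha$ (since $(b_{k+1}-b_{k-1})^\alpha\ge\max$ of the two summands). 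Now $f(a_{n-1})$ lies on one side or the other of $b_k$, so $|f(a_n)-f(a_{n-1})|\ge\min(b_k-b_{k-1},\,b_{k+1}-b_k)$, giving exactly the bound you need. The endpoint cases $k=1,n$ are easier. This is the missing idea: delete $a_n$, not an interior $a_k$, and bound the gap-sum increment by the $\min$ of the adjacent image gaps rather than trying to match image neighbours with domain neighbours.
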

\begin{proof}
We will proceed by induction on $n$. For $n=1,2$ the statement is trivial. So suppose that
 $n>2$ and the lemma ``holds for $n=n-1$''. 

Let $f(F)= \{b_1<\dots< b_n\}$ and $f(a_n)= b_k$. Denoting $F^*:= \{a_1,\dots,a_{n-1}\}$, we have
 in the case $1<k<n$
\begin{multline*}
 G_{\alpha}(f(F)) = G_{\alpha}(f(F^*)) - (b_{k+1}- b_{k-1})^{\alpha} + (b_{k+1}- b_{k})^{\alpha}
 +  (b_{k}- b_{k-1})^{\alpha}\\ \leq  G_{\alpha}(f(F^*)) + \min((b_{k+1}- b_{k})^{\alpha}, (b_{k}- b_{k-1})^{\alpha}) \leq G_{\alpha}(f(F^*)) + |f(a_n) - f(a_{n-1})|^{\alpha}.
\end{multline*}
In the (easier) cases $k=n$ and $k=1$ we obtain the same inequalities:
$$
G_{\alpha}(f(F)) = G_{\alpha}(f(F^*)) + (b_n - b_{n-1})^{\alpha} \leq G_{\alpha}(f(F^*)) + |f(a_n) - f(a_{n-1})|^{\alpha},
$$
$$
G_{\alpha}(f(F)) = G_{\alpha}(f(F^*)) + (b_2 - b_1)^{\alpha} \leq G_{\alpha}(f(F^*)) + |f(a_n) - f(a_{n-1})|^{\alpha}.
$$
By the induction hypothesis, $G_{\alpha}(f(F^*)) \leq \sum_{i=1}^{n-2} |f(a_{i+1}) - f(a_i)|^{\alpha}$,
 and so \eqref{obrkon} follows.
\end{proof}

\begin{lemma}\label{obra}
Let  $A\subset \R$, $f:A\to \R$, $\alpha>0$ and $B>0$. Suppose that $f(A)$ is compact
and
\begin{equation}\label{bezgap}
\sum_{i=1}^{n-1} |f(a_{i+1}) - f(a_i)|^{\alpha} \leq B,\ \ \text{if}\ \ a_1< \dots<a_n,\ a_i\in A,\  i=1,\dots,n.
\end{equation}
Then   $G_{\alpha}(f(A))  \leq B$.
 \end{lemma}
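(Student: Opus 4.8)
The plan is to deduce Lemma~\ref{obra} from Lemma~\ref{indu} by a straightforward approximation/supremum argument. The point is that $G_\alpha$ of a compact set is, by definition, a sum of $|I|^\alpha$ over the gaps, and such an (at most countable) sum of nonnegative terms equals the supremum of its finite subsums. So it suffices to bound, by $B$, every finite subsum $\sum_{I\in\mathcal F}|I|^\alpha$ where $\mathcal F\subset\mathcal G_{f(A)}$ is finite.

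First I would fix such a finite family $\mathcal F=\{I_1,\dots,I_m\}$ of gaps of $K:=f(A)$, say $I_j=(u_j,v_j)$. Each endpoint $u_j,v_j$ lies in $K=f(A)$ (since $K$ is compact, hence closed, the endpoints of its gaps belong to $K$), so I may pick points in $A$ mapping to them; collecting all these preimages and ordering them gives a finite set $\{a_1<a_2<\dots<a_n\}\subset A$ whose image $F:=f(\{a_1,\dots,a_n\})$ is a finite subset of $K$ containing every $u_j$ and $v_j$. The key observation is that each $I_j=(u_j,v_j)$, being a gap of the larger set $K\supset F$, is contained in some gap of the smaller finite set $F$; in fact, since $u_j,v_j\in F$ and $(u_j,v_j)\cap K=\emptyset\supset (u_j,v_j)\cap F$, the interval $(u_j,v_j)$ is exactly one of the gaps of $F$ (it is an open interval between two consecutive points of $F$, because any point of $F$ strictly inside would also lie in $K$, contradicting that $I_j$ is a gap of $K$). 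Hence $\mathcal F\subset\mathcal G_F$ and therefore
$$\sum_{j=1}^m |I_j|^\alpha\ \le\ \sum_{I\in\mathcal G_F}|I|^\alpha\ =\ G_\alpha(F)\ =\ G_\alpha(f(\{a_1,\dots,a_n\})).$$

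Now I apply Lemma~\ref{indu} to the injective restriction of... — here a small wrinkle: $f$ need not be injective on $A$, but I only need it on the finite set $\{a_1,\dots,a_n\}$, and I can choose the preimages so that distinct $a_i$ have distinct images (indeed I only ever needed preimages of the finitely many endpoints $u_j,v_j$, which are distinct points, so I pick one preimage for each and the resulting map on $\{a_1,\dots,a_n\}$ is injective). Lemma~\ref{indu} then yields
$$G_\alpha(f(\{a_1,\dots,a_n\}))\ \le\ \sum_{i=1}^{n-1}|f(a_{i+1})-f(a_i)|^\alpha\ \le\ B,$$
the last inequality being precisely hypothesis~\eqref{bezgap}. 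Taking the supremum over all finite subfamilies $\mathcal F$ gives $G_\alpha(f(A))\le B$, as desired.

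I do not expect any serious obstacle here; this is a routine "reduce to the finite case and apply the already-proved finite lemma" argument. The only points requiring a line of care are (a) justifying that a finite subsum of the gap sum is $\le B$ implies the whole (countable) gap sum is $\le B$ — immediate for sums of nonnegatives — and (b) the bookkeeping that a gap of $K$ with both endpoints in the finite set $F\subset K$ is itself a gap of $F$, and that one may arrange the chosen finite set of preimages to make $f$ injective on it. Both are elementary.
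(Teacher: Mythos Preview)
Your proof is correct and follows essentially the same approach as the paper: fix finitely many gaps of $f(A)$, choose one preimage in $A$ for each endpoint so that $f$ is injective on the resulting finite ordered set, observe that those gaps are also gaps of the finite image, and then apply Lemma~\ref{indu} together with hypothesis~\eqref{bezgap} to bound the finite subsum by $B$. The paper's version is a bit terser, but the argument is the same.
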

\begin{proof}
Let  $(c_i,d_i)$, $i=1,\dots,p$, be (pairwise different) gaps of $f(A)$. Choose  $F= \{a_1 < a_2<\dots< a_n\}\subset A$ such that $f$ is injective on $F$ and $f(F)=  \bigcup_{i=1}^p \{c_i,d_i\}$. Then Lemma \ref{indu} and \eqref{bezgap} imply
$$  \sum_{i=1}^p  (d_i-c_i)^{\alpha}  \leq G_{\alpha}(f(F)) \leq \sum_{i=1}^{n-1} |f(a_{i+1}) - f(a_i)|^{\alpha} \leq B.$$
 Consequently   $G_{\alpha}(f(A))  \leq B$.
\end{proof}
We will use Lemma \ref{obra} via the following lemma.
\begin{lemma}\label{obra2}
Let $M\geq 0$, $d>0$, let $S \subset \R^2$  be an $M$-Lipschitz graph, and let $\emptyset \neq P \subset S$
 be a compact set with  $\diam P \leq d$. Let $C>0$ and  $\kappa: P \to \R$ satisfy
\begin{equation}\label{abfe}
|\kappa(p_1)-\kappa(p_2)| \leq C \|p_1-p_2\|^2,\ \ \ p_1, p_2 \in P.
\end{equation}
Then  $\kappa(P)$ is a $\BT$ set  and
$$  G_{1/2}(\kappa(P)) \leq  C^{1/2} (M+1) d.$$
\end{lemma}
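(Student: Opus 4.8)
The plan is to reduce the statement to Lemma~\ref{obra} by checking its hypothesis~\eqref{bezgap} with $\alpha=1/2$ and $B=C^{1/2}(M+1)d$. First I would fix an arbitrary finite chain $p_1,\dots,p_n\in P$ whose images $\kappa(p_1),\dots,\kappa(p_n)$ under $\kappa$ are strictly increasing (this is what \eqref{bezgap} requires: the $a_i$ are points of the domain arranged so that the $f(a_i)$ increase), and estimate $\sum_{i=1}^{n-1}|\kappa(p_{i+1})-\kappa(p_i)|^{1/2}$. By \eqref{abfe}, $|\kappa(p_{i+1})-\kappa(p_i)|^{1/2}\le C^{1/2}\|p_{i+1}-p_i\|$, so it suffices to bound $\sum_{i=1}^{n-1}\|p_{i+1}-p_i\|$ by $(M+1)d$.

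The key geometric observation is that, since $S$ is an $M$-Lipschitz graph, say $S=\{(x,g(x)):x\in\R\}$ with $g$ $M$-Lipschitz (the other orientation being symmetric), each point of $P$ is determined by its first coordinate, and for any two points $p=(x,g(x))$, $q=(x',g(x'))$ on $S$ we have $\|p-q\|\le\sqrt{1+M^2}\,|x-x'|\le(M+1)|x-x'|$. Thus, writing $x_i$ for the first coordinate of $p_i$, I get $\sum_{i=1}^{n-1}\|p_{i+1}-p_i\|\le(M+1)\sum_{i=1}^{n-1}|x_{i+1}-x_i|$. The subtle point is that the chain $p_1,\dots,p_n$ is ordered by the values of $\kappa$, not by the $x$-coordinate, so $\sum_{i=1}^{n-1}|x_{i+1}-x_i|$ is the total variation of an \emph{arbitrary} (not monotone) enumeration of finitely many points of the interval $[\min x_i,\max x_i]$, whose length is at most $\diam P\le d$. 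This is the one place where a short argument is genuinely needed, and it is the main (minor) obstacle: a permuted chain of points in an interval of length $\le d$ can have total variation much larger than $d$ — for a zig-zag of $n$ points the total variation can be on the order of $nd$ — so the naive bound fails and one must use that the chain was chosen to make $\kappa$ monotone.

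To resolve this I would argue as follows. Relabel so that, along the chain, $\kappa(p_1)<\kappa(p_2)<\dots<\kappa(p_n)$. I claim $\sum_{i=1}^{n-1}|x_{i+1}-x_i|\le \max_i x_i-\min_i x_i\le d$; indeed, by \eqref{abfe} again, if any two consecutive $x_i$ coincided the corresponding $\kappa$-values would coincide, contradicting strict monotonicity, so all $x_i$ are distinct. Hmm — distinctness alone does not give the telescoping bound. The correct route is instead to not pass through a monotone $\kappa$-chain at all, but to observe directly that \eqref{bezgap} need only be verified for chains that are \emph{monotone in the domain}: re-examining Lemma~\ref{obra}, its proof picks $F=\{a_1<\dots<a_n\}$, i.e.\ the chain is increasing in $\R$, and the domain here is $P\subset S$, linearly ordered by the $x$-coordinate via the graph parametrization. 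So I would instead verify the hypothesis of Lemma~\ref{obra} in the form: for $x_1<x_2<\dots<x_n$ with $(x_j,g(x_j))\in P$, one has $\sum_{j=1}^{n-1}|\kappa(p_{j+1})-\kappa(p_j)|^{1/2}\le\sum_{j=1}^{n-1}C^{1/2}(M+1)(x_{j+1}-x_j)=C^{1/2}(M+1)(x_n-x_1)\le C^{1/2}(M+1)d$, where the last telescoping step is now legitimate because the $x_j$ are increasing and $x_n-x_1\le\diam P\le d$.

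Putting it together: $\kappa$, viewed as a function on $P$ parametrized by the $x$-coordinate, satisfies the chain estimate \eqref{bezgap} with $\alpha=1/2$ and $B=C^{1/2}(M+1)d$; since $\kappa(P)$ is assumed compact, Lemma~\ref{obra} yields $G_{1/2}(\kappa(P))\le C^{1/2}(M+1)d<\infty$. Finally, $\kappa(P)$ is Lebesgue null: it is a compact set with $G_{1/2}(\kappa(P))<\infty$, hence by \eqref{nulmin} (or directly, since finite degree-$1/2$ gap sum forces $\overline{\dim}_M\kappa(P)\le 1/2<1$) it has measure zero, so $\kappa(P)$ is a $\BT$ set. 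I expect the only real care needed is the bookkeeping in the previous paragraph — making sure the chain in \eqref{bezgap} is taken monotone in the graph parameter so that the variation telescopes to $\diam P$ — and the harmless case distinction between the two possible orientations of the Lipschitz graph.
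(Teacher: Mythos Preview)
Your core argument for the gap-sum bound is essentially the paper's: parametrize $P$ by the $x$-coordinate via $\psi(x)=(x,g(x))$, set $A=\psi^{-1}(P)\subset\R$ and $f=\kappa\circ\psi$, and verify hypothesis~\eqref{bezgap} of Lemma~\ref{obra} for chains $a_1<\dots<a_n$ in $A$, using \eqref{abfe} and the $(M+1)$-Lipschitz bound for $\psi$ so that the sum telescopes to $C^{1/2}(M+1)(a_n-a_1)\le C^{1/2}(M+1)d$. (Your initial detour through a $\kappa$-monotone chain was a misreading of Lemma~\ref{obra}, but you caught and corrected it.)

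Two points need fixing, one minor and one genuine:

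\textbf{(Minor)} You write ``since $\kappa(P)$ is assumed compact''. It is not assumed; it follows because \eqref{abfe} forces $\kappa$ to be continuous and $P$ is compact. The paper states this explicitly.

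\textbf{(Genuine gap)} Your argument that $\kappa(P)$ is Lebesgue null is circular. You appeal to \eqref{nulmin} and to \eqref{hawk}, but both of those results already \emph{assume} the set is Lebesgue null (look at the hypothesis of \eqref{nulmin}: ``If $B$ is a $BT_\alpha$ set\dots'', and $BT_\alpha$ includes nullness by Definition~\ref{bano}; likewise \eqref{hawk} is stated for compact \emph{null} sets). A finite gap sum by itself says nothing about measure: $[0,1]$ has $G_{1/2}([0,1])=0$ yet $\lambda_1([0,1])=1$. The paper instead observes that $\cal H^1(P)<\infty$ (a compact subset of a Lipschitz graph) and that \eqref{abfe} is a H\"older-type bound with exponent $2$, so $\cal H^{1/2}(\kappa(P))<\infty$ by the standard image estimate (\cite{Fa}*{Proposition~2.2}), hence $\lambda_1(\kappa(P))=0$. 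An equally valid elementary fix: cover $A\subset\R$ by $n$ intervals of length $d/n$; on each, $f=\kappa\circ\psi$ oscillates by at most $C(M+1)^2(d/n)^2$, so $\kappa(P)=f(A)$ is covered by $n$ intervals of total length $\le C(M+1)^2d^2/n\to 0$.
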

\begin{proof}
First observe that \eqref{abfe} implies that $\kappa$ is continuous and, hence, $\kappa(P)$ is compact.
We can (and will) suppose that  $S= \{(x,g(x)):\ x \in \R\}$, where  $g: \R \to \R$ is
 an $M$-Lipschitz  function. Set  $\psi(x):= (x, g(x))$ and observe that $\psi$ is an $(M+1)$-Lipschitz mapping. 

Now we will apply Lemma \ref{obra} with $\alpha:= 1/2$, $A:= \psi^{-1}(P)$, $f:= \kappa \circ  \psi$
 and $B:= C^{1/2} (M+1) d$. To this end,  
  consider arbitrary
 elements $a_1<\dots <a_n$ of $A$. For each $1\leq i \leq n-1$ we have
$$ |f(a_{i+1}) - f(a_i)|^{1/2} =  |\kappa (\psi(a_{i+1})) - \kappa (\psi(a_i))|^{1/2}.$$
Since  $\psi(a_{i+1}), \psi(a_i)  \in P$, we obtain by  \eqref{abfe}
$$  |f(a_{i+1}) - f(a_i)|^{1/2} \leq ( C \|\psi(a_{i+1} )-\psi(a_i)\|^2   )^{1/2} \leq  
C^{1/2} (M+1) (a_{i+1}-a_i).$$
As  $\sum_{i=1}^{n-1} (a_{i+1}-a_i)\leq \diam(A)\leq  d$, we have
$$\sum_{i=1}^{n-1} |f(a_{i+1}) - f(a_i)|^{1/2} \leq C^{1/2} (M+1) d  =B.$$
So Lemma \ref{obra} implies 
 $$G_{1/2}(\kappa(P))= G_{1/2}(f(A)) \leq B=  C^{1/2} (M+1) d. $$

Since $\cal H^{1}(P)< \infty$, \eqref{abfe} and \cite{Fa}*{Proposition 2.2} imply that $\cal H^{1/2}(\kappa(P))<\infty$,
 and consequently $\kappa(P)$ is a $\BT$ set.
 \end{proof}

\subsection{Critical points of Lipschitz and DC functions}

  Let $f$ be a locally Lipschitz function on an open $G\subset \R^d$. Then 
    $$f^0(a,v) : = \limsup_{z \to a, t \to 0+}\ \frac{f(z+tv)-f(z)}{t}$$
   is the {\it Clarke derivative} of $f$ at $a\in G$ in the direction $v\in \R^d$ and
   $$ \partial f(a) := \{x^* \in (\R^d)^*:\ x^*(v) \leq  f^0(a,v)\ \ \text{for all}\ \ v \in \R^d\}$$
   is the {\it Clarke subdifferential} of $f$ at $a$.
	Since we identify $(\R^d)^*$ with $\R^d$ in the
standard way, we sometimes consider $\partial f(a)$ as a subset of $\R^d$.
	
	If $G$ is a convex set, $f$ is a continuous convex function on $G$ and $a \in G$,
	then (see \cite{Cl}*{Proposition~2.2.7})  
	\begin{equation}\label{rode}
	 f'_+(a,v) =  f^0(a,v) \ \ \text{for each}\ \  v \in \R^d
	\end{equation}
 and	the Clarke subdifferential   $\partial f(a)$
	 coincides with the classical subdifferential from convex analysis.
	We will need also the easy fact that, under the above conditions, 
	\begin{equation}\label{veldiam}
	\diam \partial f(a) \geq  f'_+(x,v) + f'_+(x,-v) \  \text{for each}\  \
	 v \in S_{\R^d},
	\end{equation}
	which follows easily e.g.\ from \cite{Cl}*{Proposition~2.1.2~(b)} and \eqref{rode}.
	 
   We shall use the following standard terminology (see e.g. \cite{Fu}).
   \begin{definition}\label{critreg}
   Let $f$ be a locally Lipschitz function on an open set $\emptyset \neq G\subset \R^d$.  Then we say 
    that $a\in G$ is a {\it regular point} of $f$ if $0 \notin \partial f(a)$. If $0 \in \partial f(a)$,
   we say that $a$ is a {\it critical point} of $f$. The set of all critical points of $f$ will be denoted by 
    $\Crit(f)$. By the set of {\it critical values} of $f$ we mean the set $\cv(f) := f(\Crit(f))$.
   \end{definition}
    
 We will need the following easy lemma (see \cite{RZ1}*{Lemma~2.5}).   
\begin{lemma}\label{charcrit}
Let $f$ be a locally Lipschitz function on an open set $G \subset \R^d$ and $a \in G$. Then the following conditions are equivalent.
\begin{enumerate}
\item $a \notin \Crit(f).$
\item
 There exist $\delta >0$, $\varepsilon >0$, and $v \in \R^d$ such that
$$  \frac{f(x+tv) - f(x)}{t} < - \varepsilon, \ \ \ \text{whenever}\ \ \ t>0, x \in B(a,\delta), x+tv \in B(a,\delta).$$
\end{enumerate}
\end{lemma}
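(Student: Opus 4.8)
The plan is to use the standard fact (see \cite{Cl}*{Proposition~2.1.2}) that the Clarke generalized directional derivative $f^0(a,\cdot)$ is precisely the support function of the nonempty, convex, compact set $\partial f(a)$, i.e.\ $f^0(a,v)=\max\{x^*(v):\ x^*\in\partial f(a)\}$. With this identity, condition (1) (that $0\notin\partial f(a)$) becomes equivalent to the existence of a direction $v$ along which the support function is strictly negative, and it is this direction that will serve in (2).

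For the implication (1)$\Rightarrow$(2): assuming $0\notin\partial f(a)$, a Hahn--Banach separation argument applied to the compact convex set $\partial f(a)$ gives $v\in\R^d$ and $c>0$ with $x^*(v)\le -c$ for all $x^*\in\partial f(a)$, whence $f^0(a,v)\le -c<0$. Put $\ep:=c/2$. By the definition of $f^0(a,v)$ as a $\limsup$, there is $\delta>0$ with $\overline B(a,\delta)\subset G$ and
$$\frac{f(z+sv)-f(z)}{s}<-\ep\qquad\text{whenever }z\in B(a,\delta),\ 0<s<\delta.$$
The one nontrivial step is to promote this ``infinitesimal'' estimate to the asserted inequality valid for \emph{all} admissible $t>0$, however large: given $x\in B(a,\delta)$ and $x+tv\in B(a,\delta)$, the whole segment $[x,x+tv]$ lies in $B(a,\delta)$ by convexity of the ball; choosing $N\in\N$ with $t/N<\delta$ and applying the infinitesimal estimate to the consecutive points $x_j:=x+\tfrac{jt}{N}v\in B(a,\delta)$, $j=0,\dots,N$, we get $f(x_{j+1})-f(x_j)<-\ep\,t/N$, and summing these telescoping inequalities yields $f(x+tv)-f(x)<-\ep t$, which is (2).

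For (2)$\Rightarrow$(1): the hypothesis gives, for $z$ close enough to $a$ and $s>0$ small enough that $z+sv\in B(a,\delta)$, the bound $(f(z+sv)-f(z))/s<-\ep$; taking the $\limsup$ over $z\to a$, $s\to 0+$ gives $f^0(a,v)\le-\ep<0$. Since every $x^*\in\partial f(a)$ satisfies $x^*(v)\le f^0(a,v)<0$, we conclude $0\notin\partial f(a)$, i.e.\ $a\notin\Crit(f)$. The main (and only) obstacle is the subdivision argument in (1)$\Rightarrow$(2), needed because the statement controls the difference quotient uniformly in $t$ rather than only in the limit $t\to0+$; convexity of the balls makes it routine.
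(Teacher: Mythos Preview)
Your proof is correct. Note, however, that the paper does not actually prove this lemma: it simply cites \cite{RZ1}*{Lemma~2.5} and states the result without argument. So there is no ``paper's own proof'' to compare against here.

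Your argument is the standard one: identify $f^0(a,\cdot)$ as the support function of $\partial f(a)$, separate $0$ from the compact convex set $\partial f(a)$ to obtain a direction $v$ with $f^0(a,v)<0$, and then pass from the infinitesimal $\limsup$ control to the uniform-in-$t$ statement by subdividing the segment $[x,x+tv]\subset B(a,\delta)$ and telescoping. The reverse implication is immediate from the definition of $f^0$. One tiny cosmetic point: when you first extract $\delta>0$ from the $\limsup$ you should also ensure $z+sv\in G$ for $z\in B(a,\delta)$ and $0<s<\delta$ (e.g.\ by shrinking $\delta$ so that $B(a,\delta(1+\|v\|))\subset G$); you implicitly assume this but it is harmless.
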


Lemma \ref{charcrit} immediately implies the well-known fact that
   \begin{equation}\label{crcl}
   \Crit(f) \ \ \text{is closed}\ \ \text{in}\ \ G.
   \end{equation}

The following	lemma is a refined version of \cite{RZ1}*{Lemma 3.1}. 

\begin{lemma}\label{critdc2}
Let $f$, $g$ be convex functions on an open convex  set $C \subset \R^d$, and let $d:= f-g$.
 Assume that $\ep>0$, $x\in C$, $v\in \R^d$ are such that $d'_+(x,v) < -5\ep$,
\begin{equation}\label{mzl}
 f'_+(x,v) + f'_+(x,-v) \leq \ep\ \ \text{and}\  \ g'_+(x,v) + g'_+(x,-v) \leq  \ep.
\end{equation}
Then  $x \notin \Crit(d)$.
\end{lemma}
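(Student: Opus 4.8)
The plan is to exhibit a direction $v'$ near $v$ witnessing that $x$ is regular for $d$, via Lemma~\ref{charcrit}. The intuition behind the hypotheses \eqref{mzl} is that they force $f$ and $g$ to be "almost linear" along the line through $x$ in direction $v$ near $x$: indeed, for a convex function $h$, the quantity $h'_+(x,v)+h'_+(x,-v)$ is nonnegative and measures how far $h$ is from being affine along that line; by \eqref{veldiam} it also controls $\diam\partial h(x)$. So under \eqref{mzl} both $f$ and $g$ have subdifferentials at $x$ of diameter at most $\ep$, and hence $d=f-g$ has a subdifferential (in the sense of Clarke) that is ``almost a single vector'', of diameter at most $2\ep$.

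First I would make this precise: since $\partial f(x)$ and $\partial g(x)$ are compact convex sets of diameter $\le\ep$, and since $d'_+(x,v)\le f'_+(x,v)-g'_+(x,-v)$-type bookkeeping... more carefully, using $d^0(x,v)\le f^0(x,v)+(-g)^0(x,v)=f^0(x,v)+g^0(x,-v)$ we would get, for every unit $w$,
\begin{equation*}
d^0(x,w)\le \max_{p\in\partial f(x)}\langle p,w\rangle + \max_{q\in\partial g(x)}\langle q,-w\rangle.
\end{equation*}
Pick $p_0\in\partial f(x)$, $q_0\in\partial g(x)$; then $\langle p,w\rangle\le\langle p_0,w\rangle+\ep$ and $\langle q,-w\rangle\le\langle -q_0,w\rangle+\ep$, so $d^0(x,w)\le\langle p_0-q_0,w\rangle+2\ep$ for all $w\in S_{\R^d}$. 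In particular $\partial d(x)$ is contained in the ball $\overline B(p_0-q_0,2\ep)$. Now the hypothesis $d'_+(x,v)<-5\ep$ together with $f^0(x,v)=f'_+(x,v)$, $g^0(x,-v)=g'_+(x,-v)$ (convexity, \eqref{rode}) and $f'_+(x,v)\le -g'_+(x,-v)+\ep$... I should instead directly estimate $\langle p_0-q_0,v\rangle$: we have $\langle p_0,v\rangle\le f'_+(x,v)$ and $-\langle q_0,v\rangle=\langle q_0,-v\rangle\le g'_+(x,-v)$, hence $\langle p_0-q_0,v\rangle\le f'_+(x,v)+g'_+(x,-v)$. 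From $d'_+(x,v)=f'_+(x,v)-g'_+(x,v)<-5\ep$ and $g'_+(x,v)+g'_+(x,-v)\le\ep$ we get $g'_+(x,-v)\le\ep-g'_+(x,v)$, so $\langle p_0-q_0,v\rangle\le f'_+(x,v)+\ep-g'_+(x,v)=d'_+(x,v)+\ep<-4\ep$.

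Then I would conclude: the Clarke subdifferential $\partial d(x)$ is a subset of $\overline B(p_0-q_0,2\ep)$, and the center $p_0-q_0$ satisfies $\langle p_0-q_0,v\rangle<-4\ep$, i.e. $\|p_0-q_0\|$ has a component in direction $-v$ of size $>4\ep$; since the ball has radius $2\ep$, every element $\xi\in\partial d(x)$ satisfies $\langle\xi,v\rangle\le\langle p_0-q_0,v\rangle+2\ep<-2\ep<0$, so $0\notin\partial d(x)$, i.e. $x\notin\Crit(d)$. The main technical point to get right is the interplay between $f^0$, the convex $h^0=h'_+$, the Clarke calculus for differences (only a \emph{subadditivity} inclusion $\partial(f-g)(x)\subseteq\partial f(x)+\partial(-g)(x)=\partial f(x)-\partial g(x)$ is available, not equality), and keeping the constant $5\ep$ honest — but the computation above shows $5\ep$ is comfortably enough, and in fact the argument is robust to the precise numerical constant. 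The cited facts \eqref{rode}, \eqref{veldiam}, and \cite{Cl}*{Prop.~2.1.2, 2.3.1, 2.3.3} supply everything needed.
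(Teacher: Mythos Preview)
Your approach---working directly with the Clarke subdifferential to show $0\notin\partial d(x)$---is genuinely different from the paper's, which instead applies Lemma~\ref{charcrit} and proves a uniform estimate $(d(y+tv)-d(y))/t<-\ep$ on a small ball around $x$ by squeezing the difference quotients of $f$ and $g$ separately.

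There is, however, a real gap in your argument. You assert that $\partial f(x)$ and $\partial g(x)$ have diameter $\le\ep$, and from this derive $\partial d(x)\subset\overline B(p_0-q_0,2\ep)$. But the hypothesis \eqref{mzl} is stated only for the \emph{single} direction $v$, not for all unit directions; and \eqref{veldiam} gives only a \emph{lower} bound on $\diam\partial f(x)$ in terms of $f'_+(x,v)+f'_+(x,-v)$, not an upper bound. The subdifferentials could be large in directions orthogonal to $v$, so the ball inclusion is unjustified.

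The good news is that this detour is unnecessary: your own ingredients already yield the conclusion directly. You have $d^0(x,v)\le f'_+(x,v)+g'_+(x,-v)$ from Clarke's sum and sign rules together with \eqref{rode}, and your computation
\[
f'_+(x,v)+g'_+(x,-v)=d'_+(x,v)+\bigl(g'_+(x,v)+g'_+(x,-v)\bigr)<-5\ep+\ep=-4\ep
\]
then gives $d^0(x,v)<-4\ep$. Since every $\xi\in\partial d(x)$ satisfies $\langle\xi,v\rangle\le d^0(x,v)$ by definition, you get $\langle\xi,v\rangle<-4\ep<0$ for all such $\xi$, hence $0\notin\partial d(x)$. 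This repaired argument is shorter than both your original write-up and the paper's proof, and---curiously---uses only the second inequality in \eqref{mzl} (the one for $g$), so it actually yields a slightly stronger lemma.
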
 
\begin{proof}
Since $f$ is convex, we have by \eqref{rode}
 $$ f'_+(x,v) =  f^0(x,v) = \limsup_{y \to x, t \to 0+} \frac{f(y+tv) - f(y)}{t}$$
and
 $$  -f_+'(x,-v) =-f^0(x,-v)= -\limsup_{y \to x, t \to 0+} \frac{f(y-tv) - f(y)}{t} $$
 $$= \liminf_{y \to x, t \to 0+} \frac{ f(y)- f(y-tv)}{t}= \liminf_{z \to x, t \to 0+}\frac{f(z+tv) - f(z)}{t}.$$
 Consequently  there exists $\delta_1>0$ such that
 \begin{equation}\label{shzd}
 -f_+'(x,-v) - \ep < \frac{f(y+tv) - f(y)}{t} < f_+'(x,v) + \ep,
\end{equation}
 whenever $t>0$, $y \in B(x,\delta_1)$ and  $y+tv \in B(x,\delta_1)$.
Since \eqref{mzl} implies $f_+'(x,v)- 2 \ep \leq  -f_+'(x,-v) - \ep $, 
 \eqref{shzd} implies
 $ |(f(y+tv) - f(y))/t  -  f'_+(x,v)| \leq 2\ep.$ Proceeding by the same way with
 $g$ instead of $f$, we obtain that there exists $\delta>0$ such that, for each $t>0$ and
 $y$ with $y \in B(x,\delta)$ and  $y+tv \in B(x,\delta)$, we have
$$ \left|\frac{f(y+tv) - f(y)}{t}  -  f'_+(x,v)\right| \leq  2 \ep\ \ \text{and}\ \ 
 \left|\frac{g(y+tv) - g(y)}{t}  -  g'_+(x,v)\right| \leq 2\ep,$$
 and consequently
$  |(d(y+tv) - d(y))/t  -  d'_+(x,v)| \leq  4 \ep$, which implies
 $$ \frac{d(y+tv) - d(y)}{t} < -5 \ep + 4 \ep = -\ep.$$
Thus we obtain  $x \notin \Crit(d)$ by Lemma \ref{charcrit}.
\end{proof}

We will also need the following version of \cite{RZ3}*{Corollary~4.5}.

\begin{lemma}\label{poklip}
Let $f$ be a convex function on an open convex set $C \subset \R^2$. Suppose that $f$ is $L$-Lipschitz
 ($L>0$) and let $0< \ep <1$ be given. Then the set
$$ A:= \{x\in C:\ \diam (\partial f(x)) > \ep\}$$
 can be covered by $m$ graphs of $M$-Lipschitz functions, where
$$  m \leq \frac{16\, L}{\ep}\ \ \text{and}\ \  M= \frac{8\, L}{\ep}.$$
\end{lemma}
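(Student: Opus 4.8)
The plan is to control, for each threshold level of the subdifferential diameter, the set where one of the two partial-derivative ``oscillations'' of $f$ is large, and to cover each such set by a single Lipschitz graph. Concretely, for $x\in A$ we have $\diam\partial f(x)>\ep$, and since $\partial f(x)\subset\R^2$ is a convex set of diameter $>\ep$, its projection onto at least one of the two coordinate axes has length $>\ep/\sqrt2$; moreover, which axis works can be localized to a dyadic direction. By \eqref{veldiam} applied with $v=e_1$ and $v=e_2$, largeness of the $i$-th coordinate projection of $\partial f(x)$ is equivalent to $\partial_i^+ f(x)+\partial_i^- f(x)$ being large, i.e.\ to the one-sided partial derivatives of $f$ in the $i$-th coordinate direction disagreeing by a definite amount. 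So it suffices to cover, for $i=1,2$, the set $A_i:=\{x\in C:\ f'_+(x,e_i)+f'_+(x,-e_i)>\ep/2\}$ (say) by $O(L/\ep)$ Lipschitz graphs with Lipschitz constant $O(L/\ep)$, and then take the union over $i$.

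Fix $i=1$ (the case $i=2$ is symmetric). The key observation is that along each line $\ell_t=\{x_2=t\}$, the function $s\mapsto f(s,t)$ is convex and $L$-Lipschitz, so its one-sided derivatives are non-decreasing in $s$, and the jump $j(s,t):=\partial^+_1 f(s,t)-\partial^-_1 f(s,t)\ge0$ has total sum over $s$ at most $2L$ on each line. Hence on each line there are at most $\lfloor 2L/(\ep/2)\rfloor=\lfloor 4L/\ep\rfloor$ points with jump exceeding $\ep/2$; after a standard ordering argument one splits $A_1$ into $\le 4L/\ep$ ``sheets'' $A_1^{(1)},\dots,A_1^{(k)}$ so that each $A_1^{(j)}$ meets every horizontal line in at most one point --- e.g.\ $A_1^{(j)}$ collects, on each line, the $j$-th largest-jump point (breaking ties by position). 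The remaining point is that each sheet is an $M$-Lipschitz graph over the $x_2$-axis: if $(s,t),(s',t')\in A_1^{(j)}$ with $t\ne t'$, then monotonicity of the one-sided partial derivatives on vertical lines together with the $L$-Lipschitz bound forces $|s-s'|\le (8L/\ep)|t-t'|$, because a horizontal displacement of $s$ against $s'$ that is too large relative to $|t-t'|$ would, by comparing $\partial_1 f$ along the two horizontal lines and along the vertical segments joining them, contradict either the $L$-Lipschitz bound or the fact that both points carry a jump $>\ep/2$ in the same monotone family. This yields $M=8L/\ep$; counting both $i=1$ and $i=2$ gives $m\le 2\cdot 4L/\ep + (\text{lower order}) \le 16L/\ep$ after absorbing the rounding, say, into the constant.

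The main obstacle I expect is the Lipschitz-graph estimate for the individual sheets: one must convert the purely one-dimensional monotonicity information (jumps of $\partial_1 f$ along horizontal lines, plus the $L$-Lipschitz control of $\partial_1 f$ along vertical lines) into a genuine two-variable Lipschitz bound $|s-s'|\le M|t-t'|$, and getting the constant $M=8L/\ep$ rather than something worse requires choosing the sheet decomposition carefully (so that the jump locations in different sheets do not ``cross'' as $t$ varies) and using the convexity in the vertical direction as well. This is exactly the point where the cited \cite{RZ3}*{Corollary~4.5} does the work, so in the write-up I would state the reduction to the two sets $A_1,A_2$ via \eqref{veldiam} explicitly and then invoke that corollary (with the explicit constants) for each of them, rather than reproving the sheet estimate from scratch. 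The bookkeeping of constants --- verifying $m\le 16L/\ep$ and $M=8L/\ep$ --- is then routine.
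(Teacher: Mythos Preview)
Your reduction to the two sets $A_1,A_2$ via the coordinate projections of $\partial f(x)$ matches the paper exactly. The gap is in the next step: your proposed ``sheet'' decomposition, where $A_1^{(j)}$ collects on each horizontal line the $j$-th largest-jump point, does not in general produce Lipschitz graphs. The labelling ``$j$-th'' is unstable under variation of the line parameter $t$: as $t$ moves, a jump can drop below the threshold $\ep/2$ (or two jumps can swap order in size), causing the $j$-th point to jump discontinuously to a far-away location. Your heuristic for the Lipschitz bound (``comparing $\partial_1 f$ along the two horizontal lines and along the vertical segments joining them'') does not supply any mechanism tying together points that merely share the same rank; nothing in convexity prevents the rank-$j$ point on nearby lines from being arbitrarily far apart. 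You seem to sense this, since you ultimately propose to invoke \cite{RZ3}*{Corollary~4.5} instead---but the present lemma \emph{is} the refined version of that corollary, so this is close to circular, and in any case does not yield the stated constants.

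The paper's decomposition is different and is exactly what makes the Lipschitz estimate work. Rather than ordering the jump points, one slices the \emph{range} $[-L,L]$ of (say) $\pi_2\partial f$ by equally spaced levels $t_k=-L+k\ep/4$, $k=0,\dots,n+1$ with $n=\lfloor 8L/\ep\rfloor$, and sets
\[
A^2_k:=\{x:\ \pi_2\partial f(x)\cap(-\infty,t_k)\neq\emptyset,\ \pi_2\partial f(x)\cap(t_{k+1},\infty)\neq\emptyset\}.
\]
Then $A^2=\bigcup_{k=1}^n A^2_k$. For any $u,v\in A^2_k$ one can pick $h\in\partial f(u)$ with $h_2>t_{k+1}$ and $d\in\partial f(v)$ with $d_2<t_k$, so $h_2-d_2>\ep/4$; the monotonicity of $\partial f$ gives $\langle v-u,d-h\rangle\ge 0$, which with $|d_1-h_1|\le 2L$ forces $|v_2-u_2|\le(8L/\ep)|v_1-u_1|$. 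Thus each $A^2_k$ is an $(8L/\ep)$-Lipschitz graph over the $x_1$-axis, and the same for $A^1$ over the $x_2$-axis, giving $m\le 2n\le 16L/\ep$. The point is that the level-based bins guarantee, for \emph{any} two of their members, a pair of subgradients with a fixed vertical gap---precisely what the monotone-operator inequality needs. Your ordinal bins carry no such information.
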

\begin{proof}
Let $\pi_1$ and $\pi_2$ be the coordinate projections in $\R^2$.
Clearly  $A \subset A^1 \cup A^2$, where
$$ A^1= \{x\in C:\ \diam( \pi_1 \partial f(x)) > \ep/2\},\  
 A^2= \{x\in C:\ \diam( \pi_2 \partial f(x)) > \ep/2\}.$$
Set
$$ n:= \lfloor 8 L/\ep\rfloor \ \ \ \text{and}\ \ \ t_k:= -L + k \ep/4,\ \ k=0,\dots, n+1,$$
and, for $1\leq k \leq n$, 
$$ A^2_k:= \{x \in A_2:\  \pi_2 \partial f(x) \cap (-\infty, t_k)\neq \emptyset,\ \
\pi_2 \partial f(x) \cap (t_{k+1},\infty)\neq \emptyset\}.$$
 Since, for each $x \in C$, $\pi_2 \partial f(x) \subset [-L,L] \subset [t_0,t_{n+1}]$,
it is easy to see that  $A^2 = \bigcup_{k=1}^{n} A^2_k$.
 We will show that 
\begin{equation}\label{adk}
\text{each $A^2_k$ is a subset of a graph of an $(8L/\ep)$-Lipschitz function.}
\end{equation}
So, fix $ 1\leq k \leq n$ and $u= (u_1,u_2) \in A^2_k$, $v= (v_1,v_2) \in A^2_k$.
 Without any loss of generality we can suppose that $v_2 \geq u_2$.
By the definition of $A^2_k$, we can choose $h= (h_1, h_2) \in \partial f(u)$
 and $d= (d_1,d_2) \in \partial f(v)$  such that $d_2 < t_k$ and $h_2>t_{k+1}$.
 Since $\partial f$ is a monotone operator (see e.g. \cite{Cl}*{Proposition~2.2.9}), we have
$$ 0 \leq \langle v-u, d-h \rangle = (v_1-u_1) (d_1-h_1) + (v_2-u_2) (d_2-h_2).$$
As $h_2- d_2 > \ep/4$, $v_2-u_2 \geq 0$ and $|d_1 -h_1| \leq 2L$, we obtain 
$ |v_2-u_2| \leq (8L/\ep) |v_1-u_1|$. Since any $C$-Lipschitz function from a subset of $\R$
 can be extended to a  $C$-Lipschitz function on $\R$, \eqref{adk} follows.
Consequently $A^2$ can be covered by $n$ graphs of $M$-Lipschitz functions. Proceeding quite analogically, we obtain that the same holds for $A_1$ and our assertion follows.
\end{proof}
Using \eqref{veldiam}, we easily obtain the following corollary.
\begin{lemma}\label{kvant}
Let $\Omega \subset \R^2$ be an open convex set,  $f$ a Lipschitz convex function on $\Omega$,  and $\ep>0$. Then  the set 
\begin{eqnarray*}
A:= \{x \in \Omega: \ f'_+(x,v)+ f'_+(x,-v) > \ep \text{ for some } v \in S_{\R^2}\} 
\end{eqnarray*}
 can be covered by finitely many Lipschitz graphs.
\end{lemma}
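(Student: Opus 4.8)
The plan is to reduce Lemma~\ref{kvant} to Lemma~\ref{poklip} by relating the quantity $f'_+(x,v)+f'_+(x,-v)$ to $\diam(\partial f(x))$. The key observation is the elementary inequality~\eqref{veldiam}, which for a Lipschitz convex function $f$ on an open convex $\Omega\subset\R^2$ says that
\begin{equation*}
\diam\partial f(x) \geq f'_+(x,v)+f'_+(x,-v)\quad\text{for every }v\in S_{\R^2}.
\end{equation*}
Hence if $x\in A$, i.e.\ $f'_+(x,v)+f'_+(x,-v)>\ep$ for some unit vector $v$, then automatically $\diam\partial f(x)>\ep$, so that
\begin{equation*}
A\subset \{x\in\Omega:\ \diam\partial f(x)>\ep\}.
\end{equation*}
This is exactly the set to which Lemma~\ref{poklip} applies.

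First I would record that $f$, being Lipschitz on $\Omega$, is $L$-Lipschitz for some $L>0$; I may also assume $\ep<1$ (if not, replace $\ep$ by $\min(\ep,1/2)$, which only enlarges $A$, so a covering of the enlarged set covers the original). Then I apply Lemma~\ref{poklip} directly to $f$ on $\Omega$ with this $\ep$: it gives that $\{x\in\Omega:\ \diam\partial f(x)>\ep\}$ is covered by $m$ graphs of $M$-Lipschitz functions, with $m\le 16L/\ep$ finite and $M=8L/\ep$. Combined with the inclusion above, $A$ is covered by these same finitely many Lipschitz graphs, which is the assertion.

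There is essentially no obstacle here; the only point that requires a word of care is that Lemma~\ref{poklip} is stated for $f$ globally $L$-Lipschitz on the convex set $C=\Omega$, which is precisely the hypothesis we have, so no localization or cutoff is needed. One should note that the Lipschitz graphs produced by Lemma~\ref{poklip} are graphs of functions $\R\to\R$ (in one of the two coordinate directions), hence in particular Lipschitz graphs in the sense of Definition~\ref{lipgr}, so the conclusion ``finitely many Lipschitz graphs'' is literally what is obtained. Thus the proof is a one-line combination of~\eqref{veldiam} and Lemma~\ref{poklip}, and indeed the excerpt's phrasing ``Using~\eqref{veldiam}, we easily obtain the following corollary'' signals exactly this.
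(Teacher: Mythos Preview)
Your proof is correct and follows exactly the approach the paper intends: the inclusion $A\subset\{x\in\Omega:\diam\partial f(x)>\ep\}$ via \eqref{veldiam}, followed by a direct application of Lemma~\ref{poklip}. The paper's own ``proof'' is just the sentence ``Using~\eqref{veldiam}, we easily obtain the following corollary,'' and you have filled in precisely those details (including the harmless reduction to $\ep<1$).
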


\begin{lemma}\label{pokr}
Let  $d$ be a locally DC function on an open  $G \subset \R^2$ and $\alpha>0$. Assume that 
  for each $x \in G$ there exists $v \in S_{\R^2}$ such that
 $d'_+(x,v) < - \alpha$. Then for each $z \in G$ there exists $r>0$ such that
$\Crit(d) \cap B(z,r)$ can be covered by finitely many  Lipschitz graphs.
\end{lemma}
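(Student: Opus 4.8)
The plan is to localize the DC representation of $d$ and then bolt together Lemma~\ref{critdc2} and Lemma~\ref{kvant}. Fix $z\in G$. Since $d$ is locally DC, I would first pick $r>0$ with $\overline{B(z,2r)}\subset G$ and convex functions $f,g$ on the open convex ball $C:=B(z,2r)$ with $d=f-g$ on $C$; being convex on $C$, the functions $f$ and $g$ are Lipschitz on the smaller open convex ball $\Omega:=B(z,r)$. I then claim this $r$ works.

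Set $\ep:=\alpha/5$. The heart of the argument is the inclusion
$$\Crit(d)\cap\Omega\ \subset\ A_f\cup A_g,$$
where $A_h:=\{x\in\Omega:\ h'_+(x,v)+h'_+(x,-v)>\ep\text{ for some }v\in S_{\R^2}\}$ for $h\in\{f,g\}$. To prove it, take $x\in\Crit(d)\cap\Omega$ and choose, by hypothesis, $v\in S_{\R^2}$ with $d'_+(x,v)<-\alpha=-5\ep$. If $x$ belonged to neither $A_f$ nor $A_g$, then $f'_+(x,v)+f'_+(x,-v)\le\ep$ and $g'_+(x,v)+g'_+(x,-v)\le\ep$, so Lemma~\ref{critdc2} (applied on $C$) would yield $x\notin\Crit(d)$ --- a contradiction. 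Hence $x\in A_f\cup A_g$.

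To finish, I would invoke Lemma~\ref{kvant} for the Lipschitz convex functions $f$ and $g$ on the open convex set $\Omega$: it gives that each of $A_f$ and $A_g$ lies in a finite union of Lipschitz graphs. Consequently $\Crit(d)\cap B(z,r)=\Crit(d)\cap\Omega\subset A_f\cup A_g$ is covered by finitely many Lipschitz graphs, which is the assertion.

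I do not expect a genuine obstacle here; the proof is a short assembly of the two preceding lemmas. The only two points requiring care are: (a) arranging the local DC representation on a ball strictly larger than $\Omega$, so that $f$ and $g$ really are Lipschitz convex functions on the open convex set $\Omega$ as Lemma~\ref{kvant} demands; and (b) the bookkeeping of the constant, taking $\ep=\alpha/5$ so that the hypothesis $d'_+(x,v)<-\alpha$ delivers exactly the $-5\ep$ bound needed to apply Lemma~\ref{critdc2}.
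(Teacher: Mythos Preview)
Your proof is correct and follows essentially the same route as the paper: localize to obtain a DC decomposition $d=f-g$ with $f,g$ convex and Lipschitz on a ball, set $\ep=\alpha/5$, and combine Lemma~\ref{critdc2} with Lemma~\ref{kvant} to see that critical points of $d$ must lie in the union of the two ``bad'' sets for $f$ and $g$, each of which is covered by finitely many Lipschitz graphs. Your extra care in point (a) --- taking the DC representation on a larger ball so that $f,g$ are genuinely Lipschitz on $\Omega$ --- makes explicit what the paper sweeps into the single clause ``convex Lipschitz functions on $B(z,r)$''.
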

\begin{proof}
Let $z\in G$ be arbitrary. Since each convex function on an open convex set is locally
 Lipschitz, we can choose $r>0$ and convex Lipschitz functions  $f$, $g$ on
 $B(z,r)$ such that  $d(x)= f(x)-g(x),\ x \in B(z,r)$. Set $\ep:= \alpha/5$ and apply
 Lemma \ref{kvant} to $f$ and to $g$. We obtain Lipschitz graphs $P_1,\dots, P_s$
 such that, for each $x\in B(z,r) \setminus (P_1\cup\dots\cup P_s)$ and
 $v \in S_{\R^2}$, we have
\begin{equation}\label{fag}
 f'_+(x,v)+ f'_+(x,-v) \leq \ep\ \ \ \text{and}\ \ \ g'_+(x,v)+ g'_+(x,-v) \leq \ep.
\end{equation}
Now consider an arbitrary $x \in B(z,r) \setminus (P_1\cup\dots\cup P_s)$ and
 choose $v \in S_{\R^2}$ with  $d'_+(x,v) < -\alpha = -5 \ep$. Then \eqref{fag}
 and Lemma \ref{critdc2} imply that $x \notin \Crit(d)$ and therefore $\Crit(d) \cap B(z,r) \subset P_1\cup\dots\cup P_s$.
\end{proof}

\subsection{Some facts about distance functions}

Here we recall some well-known facts about distance functions, their critical points,
 and critical values.

Recall that if $X$ is a metric space and $\emptyset \neq F \subset X$ a closed set then 
$d_F = \dist(\cdot,F)$ and  $S_r= \{x \in X:\ d_F(x)=r\}$, $r>0$.  It is well-known that $d_F$ is $1$-Lipschitz.
We will use the following obvious observation.
\begin{lemma}\label{reduk}
Let $a\in X$, $r>0$ and $B(a,r) \cap F \neq \emptyset$. Then
$$  d_F(x)= d_{F \cap \overline{B}(a,3r)}(x)\ \ \ \text{for each}\ \ \ x \in B(a,r).$$
\end{lemma}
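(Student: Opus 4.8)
The statement to prove is Lemma~\ref{reduk}: if $a\in X$, $r>0$ and $B(a,r)\cap F\neq\emptyset$, then $d_F(x)=d_{F\cap\overline B(a,3r)}(x)$ for each $x\in B(a,r)$.

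\medskip

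The plan is to prove the two inequalities $d_F(x)\le d_{F\cap\overline B(a,3r)}(x)$ and $d_F(x)\ge d_{F\cap\overline B(a,3r)}(x)$ separately, the first being immediate from monotonicity of the distance function with respect to the set, and the second requiring the geometric hypothesis $B(a,r)\cap F\neq\emptyset$.

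\medskip

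First I would note the trivial inclusion $F\cap\overline B(a,3r)\subset F$, which gives at once $d_F(x)\le d_{F\cap\overline B(a,3r)}(x)$ for every $x\in X$, since taking infimum over a larger set can only decrease the value (and both sets are nonempty: the hypothesis guarantees $F\cap\overline B(a,3r)\supset B(a,r)\cap F\neq\emptyset$). For the reverse inequality, fix $x\in B(a,r)$. Pick any point $p\in B(a,r)\cap F$; then $d_F(x)\le\|x-p\|\le\|x-a\|+\|a-p\|<r+r=2r$. Now I claim that the infimum defining $d_F(x)$ is realized, or at least approached, only by points of $F$ lying in $\overline B(a,3r)$: indeed, if $q\in F$ with $\|x-q\|\ge 2r$... hmm, that's not quite a strict argument since the infimum might not be attained. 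Let me instead argue: for any $q\in F$ with $q\notin\overline B(a,3r)$ we have $\|x-q\|\ge\|a-q\|-\|a-x\|>3r-r=2r>d_F(x)$, so such $q$ cannot come arbitrarily close to the infimum. Hence $d_F(x)=\inf\{\|x-q\|:q\in F\cap\overline B(a,3r)\}=d_{F\cap\overline B(a,3r)}(x)$, where the middle infimum is over the nonempty set $F\cap\overline B(a,3r)$. Combining the two inequalities yields the claim.

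\medskip

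There is essentially no obstacle here; the only point requiring a sliver of care is that $d_F(x)$ need not be attained (in a general metric space, or even in $\R^d$ when $F$ is merely closed and unbounded the infimum is attained by completeness, but in an abstract metric space it may not be), so one should phrase the comparison in terms of the strict inequality $d_F(x)<2r$ versus $\|x-q\|>2r$ for $q\notin\overline B(a,3r)$, which safely excludes the far-away points of $F$ from affecting the infimum. Since the paper only uses this in $\R^2$ and in Riemannian manifolds, where closed balls are compact and the infimum is attained, one could alternatively just say ``choose $q\in F$ with $d_F(x)=\|x-q\|$''; but the strict-inequality version is cleaner and fully general, so I would write it that way.
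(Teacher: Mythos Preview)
Your proof is correct. The paper does not actually give a proof of this lemma: it is stated as an ``obvious observation'' and left to the reader, so your argument fills in exactly the routine verification intended.

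One small notational point: the lemma is stated for a general metric space $X$, so strictly speaking you should write $\dist(x,p)$ rather than $\|x-p\|$ throughout; the triangle-inequality steps are of course unchanged. Your remark that the infimum need not be attained in a general metric space, and that the strict inequalities $d_F(x)<2r$ and $\dist(x,q)>2r$ for $q\notin\overline B(a,3r)$ handle this cleanly, is exactly the right level of care.
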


In the rest of this subsection, let $X=\R^d$ and let $\emptyset\neq F\subset\R^d$ be closed. 
In this case,  $d_F$ is locally semiconcave on $\R^d \setminus F$; more precisely
 (see e.g.\ the proof of  \cite{BZ}*{Theorem~5.3.2}) the following holds.
\begin{lemma}\label{semdist}
 If $\emptyset \neq F \subset \R^d$ and $d_F(x)=: \delta >0$ then
  there exists a convex function $\gamma$ on $B(a, \delta/2)$ such that
 $ d_F(x) = \frac{2}{\delta}\|x\|^2 - \gamma(x)$  for each $x \in B(a, \delta/2)$.
\end{lemma}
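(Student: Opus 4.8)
The plan is to exhibit $\gamma$ explicitly as a pointwise supremum of convex functions. Write $\delta := d_F(a) > 0$ (the $x$ in the statement's ``$d_F(x) =: \delta$'' should read $a$). The first step is a uniform lower bound on distances to $F$ over the ball: for every $x \in B(a,\delta/2)$ and every $y \in F$,
$$\|x - y\| \;\geq\; \|a - y\| - \|x - a\| \;>\; \delta - \tfrac{\delta}{2} \;=\; \tfrac{\delta}{2}.$$
Thus every point of $F$ is uniformly bounded away from $B(a,\delta/2)$, which is exactly what will let me control the second derivatives of the individual distances $x \mapsto \|x-y\|$ uniformly in $y$.

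Next, fix $y \in F$ and set $g_y(x) := \tfrac{2}{\delta}\|x\|^2 - \|x - y\|$. On $\rd \setminus \{y\} \supset B(a,\delta/2)$ the map $x \mapsto \|x-y\|$ is $C^\infty$, with Hessian $\|x-y\|^{-1}(I - P)$, where $P$ is the orthogonal projection onto the line through $x-y$; this is positive semidefinite with operator norm $\|x-y\|^{-1} \leq 2/\delta$ on $B(a,\delta/2)$, by the bound above. Since the Hessian of $x \mapsto \tfrac{2}{\delta}\|x\|^2$ is $\tfrac{4}{\delta}I$, the Hessian of $g_y$ on $B(a,\delta/2)$ is, as a quadratic form, at least $\tfrac{4}{\delta}I - \tfrac{2}{\delta}I = \tfrac{2}{\delta}I \geq 0$. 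Hence each $g_y$ is convex on $B(a,\delta/2)$.

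Finally, put $\gamma := \sup_{y \in F} g_y$ on $B(a,\delta/2)$. Since $d_F(x) = \inf_{y \in F}\|x-y\|$ is finite, for every $x \in B(a,\delta/2)$ we get
$$\gamma(x) \;=\; \tfrac{2}{\delta}\|x\|^2 - \inf_{y\in F}\|x-y\| \;=\; \tfrac{2}{\delta}\|x\|^2 - d_F(x) \;<\; \infty,$$
and a pointwise supremum of a family of convex functions that is everywhere finite is convex. So $\gamma$ is convex on $B(a,\delta/2)$, and rearranging the last display gives $d_F(x) = \tfrac{2}{\delta}\|x\|^2 - \gamma(x)$ for all $x \in B(a,\delta/2)$, as claimed. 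I do not expect a genuine obstacle here: the only point needing (minor) care is the uniform distance estimate $\|x-y\| > \delta/2$, which simultaneously guarantees smoothness of each $\|\cdot - y\|$ on the ball and the uniform Hessian bound; if one prefers to avoid derivatives entirely, the convexity of $g_y$ can instead be obtained from the elementary midpoint estimate $\|x_1-y\|+\|x_2-y\|-2\|\tfrac{x_1+x_2}{2}-y\| \leq \tfrac{\|x_1-x_2\|^2}{2\delta}$ on $B(a,\delta/2)$ together with continuity of $g_y$.
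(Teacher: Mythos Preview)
Your proof is correct. The paper does not give its own proof of this lemma but simply cites \cite{BZ}*{Theorem~5.3.2}; the argument you wrote---writing $d_F(x)=\inf_{y\in F}\|x-y\|$, checking via the Hessian bound $\nabla^2\|x-y\|=\|x-y\|^{-1}(I-P)\preceq\tfrac{2}{\delta}I$ on $B(a,\delta/2)$ that each $g_y(x)=\tfrac{2}{\delta}\|x\|^2-\|x-y\|$ is convex there, and then taking the pointwise supremum---is exactly the standard proof one finds in such references. Your observation that the statement has a typo (the point defining $\delta$ should be $a$, not $x$) is also correct.
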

 Further, it is easy to see  that for each $x \in \R^d \setminus F$
\begin{equation}\label{dmj}
(d_F)'_+(x,v) = -1\ \ \text{for some }\ \ v \in S_{\R^d}.
\end{equation}
Lemma \ref{charcrit}  easily yields that $F \subset \Crit(d_F)$ and consequently $0\in \cv(d_F)$.

Recall that the sets  $T_F$ and $L_F$ are defined in the Introduction, namely $T_F$ ($L_F$) is  the set of all
 $r>0$ for which $S_r$ is nonempty and it is not a topological (Lipschitz, resp.) $(d-1)$-dimensional manifold. 
Further recall that if $r\in (0,\infty)\setminus \cv(d_F)$, then Clarke's implicit theorem
 for Lipschitz functions implies (cf.\ \cite{Fu}*{Theorem~3.1}) that $S_r$ is either empty
 or a $(d-1)$-dimensional Lipschitz manifold.  Consequently,
\begin{equation}\label{tlcv}
 T_F \subset L_F \subset \cv(d_F).
\end{equation}
Further, if  $x \in \Crit(d_F)$, then  $d_F(x) \leq \diam (F)$ (see \cite{Fu}*{p.~1038} for a more precise estimate), and so
\begin{equation}\label{cvdiam}
\cv(d_F) \subset [0, \diam(F)].
\end{equation}

Finally, we will essentially use the following immediate consequence of Ferry's inequality
 (\cite{Fe}*{Proposition~1.5}), see \cite{Fu}*{Lemma~4.3}:
\begin{equation}\label{ferryn}
|d_F(v) - d_F(w)| \leq (2 \min(d_F(v), d_F(w)))^{-1} \|v - w \|^2, \ \ \text{if}\ \ v,w \in \Crit(d_F) \setminus F.
\end{equation}

\section{Results in Euclidean spaces}
 
The following three lemmas will be used to prove both Proposition \ref{mkrb} on smallness of $\Crit(d_F)$
 and subsequent results on smallness of $\cv(d_F)$.

\begin{lemma}\label{nakoulib}
Let $\emptyset \neq F\subset \R^2$ be a compact set, let $a\in \R^2$ and $d_F(a)\geq \delta>0$. Then
 the set   
$$  T:= \Crit (d_F) \cap \overline B(a, \delta/3)$$
 can  be covered by 
 $m$  $144$-Lipschitz graphs $G_1, \dots, G_m$,
 with $m \leq 288$.
\smallskip

Consequently,  $\cal H^1(T) \leq 10^5\cdot \delta$.
\end{lemma}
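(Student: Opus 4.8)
The plan is to localize the distance function near the ball $\overline B(a,\delta/3)$ and write it as a DC function with controlled Lipschitz data, so that the covering Lemmas~\ref{kvant} and~\ref{pokr} (or rather their quantitative ingredients) apply. Since $d_F(a)\ge\delta$, for every $x\in B(a,\delta/3)$ we have $d_F(x)\ge\delta-\delta/3=2\delta/3>0$, so $x\notin F$ and Lemma~\ref{semdist} is available on a neighbourhood of each such point. More usefully, I want one convex decomposition valid on all of $B(a,\delta/3)$: taking the point $a$ in Lemma~\ref{semdist} with $\delta/2$ replaced appropriately, or rather applying it at $a$ itself with radius $\delta/2$, gives a convex function $\gamma$ on $B(a,\delta/2)\supset\overline B(a,\delta/3)$ with $d_F=\frac{2}{\delta}\|x\|^2-\gamma(x)$ there. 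Thus on $B(a,\delta/2)$ we have $d_F=f-g$ with $g=\gamma$ convex and $f(x)=\frac{2}{\delta}\|x\|^2$ convex, and both are Lipschitz on the slightly smaller ball $B(a,\delta/3)$ with explicit constants: $f$ has Lipschitz constant at most $\frac{4}{\delta}\cdot(\|a\|+\delta/3)$ on that ball — wait, better to recenter at $a$ so that $f(x)=\frac2\delta\|x-a\|^2+(\text{linear})$; in any case $\nabla f$ has norm $\le \frac{4}{\delta}\cdot\frac{\delta}{3}=\frac43$ on $B(a,\delta/3)$, and since $d_F$ is $1$-Lipschitz, $g=f-d_F$ is $L$-Lipschitz on $B(a,\delta/3)$ with $L\le \frac43+1=\frac73$, and $f$ itself is $\frac43$-Lipschitz there. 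So both $f$ and $g$ are $L$-Lipschitz on $B(a,\delta/3)$ with $L\le 3$ (a safe bound).

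Next I apply the criterion behind Lemma~\ref{pokr}: for every $x\in B(a,\delta/3)$, by \eqref{dmj} there is $v\in S_{\R^2}$ with $(d_F)'_+(x,v)=-1<-\frac15\cdot 5$, i.e. we may take $\alpha=1$ (so $\ep=\alpha/5=\frac15$ in Lemma~\ref{critdc2}). Now apply Lemma~\ref{kvant} (equivalently Lemma~\ref{poklip}) separately to the convex functions $f$ and $g$ on $B(a,\delta/3)$ with this $\ep=\frac15$. Lemma~\ref{poklip} gives a covering of the bad set for $f$ by $m_f\le \frac{16L}{\ep}$ graphs of $\frac{8L}{\ep}$-Lipschitz functions, and similarly for $g$; with $L\le 3$ and $\ep=\frac15$ this yields $m_f,m_g\le 16\cdot 3\cdot 5=240$ and Lipschitz constant $\le 8\cdot 3\cdot 5=120$. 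Outside the union of these graphs, \eqref{veldiam} gives $f'_+(x,v)+f'_+(x,-v)\le\ep$ and $g'_+(x,v)+g'_+(x,-v)\le\ep$ for all $v\in S_{\R^2}$, so by Lemma~\ref{critdc2} (with $d'_+(x,v)<-1=-5\ep$) such $x$ is not in $\Crit(d_F)$. Hence $T=\Crit(d_F)\cap\overline B(a,\delta/3)$ is contained in the union of at most $m_f+m_g\le 480$ Lipschitz graphs, each with constant $\le 120$; one then bookkeeps the constants to land inside the claimed bounds $m\le 288$ and constant $144$ (the paper's constants are a bit sharper than my crude $L\le 3$, $\ep=1/5$ estimate — recentering at $a$ and being careful about the factor in Lemma~\ref{semdist} tightens things; I would recompute $L$ exactly rather than bounding it by $3$).

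For the final $\cal H^1$ estimate: each of the $m$ covering sets is the graph over an interval of a $144$-Lipschitz function, and this graph meets $\overline B(a,\delta/3)$, so the relevant portion lies over an interval of length $\le \frac{2\delta}{3}$; a $C$-Lipschitz graph over an interval of length $\ell$ has $\cal H^1$-measure $\le \ell\sqrt{1+C^2}$, hence $\cal H^1$ of each piece is $\le \frac{2\delta}{3}\sqrt{1+144^2}\le \frac{2\delta}{3}\cdot 145$, and summing over $m\le 288$ pieces gives $\cal H^1(T)\le 288\cdot\frac{2\delta}{3}\cdot 145 < 10^5\,\delta$. The only genuinely non-routine point is getting a single DC decomposition $f-g$ valid on the whole ball $B(a,\delta/3)$ with Lipschitz constants depending only on $\delta$ (not on $\|a\|$) — this is handled by recentering the quadratic term at $a$, after which everything is a matter of tracking constants through Lemmas~\ref{poklip}, \ref{kvant}, \ref{critdc2}.
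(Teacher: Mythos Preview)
Your overall strategy is exactly the paper's: write $d_F$ as a difference of convex functions on $B(a,\delta/2)$ via Lemma~\ref{semdist}, then use Lemma~\ref{critdc2} together with Lemma~\ref{poklip} to cover the critical set. The $\cal H^1$ estimate at the end is also carried out just as in the paper.

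However, the specific constants $m\le 288$ and $M=144$ are \emph{not} obtainable by ``bookkeeping'' from your argument as written, and the reason is a genuine missed observation. After recentering at $a$ (which you correctly identify as necessary to make the Lipschitz constant independent of $\|a\|$), the decomposition is $d_F=\varphi-g$ with $\varphi(x)=\tfrac{2}{\delta}\|x-a\|^2$ and $g=\varphi-d_F$ convex. The point is that $\varphi$ is \emph{smooth}, so $\diam(\partial\varphi(x))=0$ for every $x$ and the ``bad set'' for $\varphi$ in Lemma~\ref{poklip} is empty: \emph{no graphs are needed for $f=\varphi$ at all}. Only $g$ requires covering. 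With $g$ being $3$-Lipschitz on $B(a,\delta/2)$ and $\ep=1/6$, Lemma~\ref{poklip} gives exactly $m\le 16\cdot 3\cdot 6=288$ graphs with constant $8\cdot 3\cdot 6=144$. Your version applies Lemma~\ref{poklip} to both $f$ and $g$, which unavoidably doubles the graph count and cannot be repaired by tightening $L$ or $\ep$.

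Two minor slips: your claim ``$\nabla f$ has norm $\le\tfrac{4}{\delta}\cdot\tfrac{\delta}{3}$'' is wrong for $f(x)=\tfrac{2}{\delta}\|x\|^2$ (the bound depends on $\|a\|$); this is exactly why the recentering is mandatory, not optional. And taking $\ep=1/5$ fails the \emph{strict} inequality $d'_+(x,v)<-5\ep$ in Lemma~\ref{critdc2}, since $(d_F)'_+(x,v)=-1$; you need $\ep<1/5$, and the paper takes $\ep=1/6$.
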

\begin{proof}
By Lemma \ref{semdist} there exists a convex function $\gamma$ on $B(a, \delta/2)$ such that
 $ d_F(x) = \frac{2}{\delta}\|x\|^2 - \gamma(x)$  for each $x \in B(a, \delta/2)$, 
  which implies
 that the function
$$  g(x)= \frac{2}{\delta}\|x-a\|^2 - d_F(x),\ \ x \in B(a, \delta/2),$$
 is convex.  Since  $d_F$ is $1$-Lipschitz, it is easy to see that $g$ is $3$-Lipschitz on $B(a, \delta/2)$.
Set
$$ Z:= \{x\in B(a, \delta/2) :  \diam(\partial g(x)) > 1/6\}.$$
We have  $d_F(x)= \frac{2}{\delta}\|x-a\|^2 - g(x),\ x  \in B(a, \delta/2)$.
 Now consider an arbitrary $x \in B(a, \delta/2) \setminus Z$.
  Since $x \notin F$, by \eqref{dmj}  there exists a unit $v\in \R^2$
 such that $(d_F)'_+(x,v)= -1$. As $x \notin Z$, we have $g'_+(x,v) + g'_+(x,-v) \leq 1/6$ (see 
\eqref{veldiam}).
 So, since the convex function $\vf:= \frac{2}{\delta}\|\cdot-a\|^2$ is differentiable at $x$,
 we can apply Lemma \ref{critdc2} (with $\ep= 1/6$) and obtain that $x \notin \Crit(d_F)$.
  Consequently $T \subset Z$.
  Lemma \ref{poklip}
 implies that  $Z$ (and so also $T$) can   be covered by 
 $m$  $144$-Lipschitz graphs $G_1, \dots, G_m$,
 where $m \leq 288$.

To estimate $\cal H^1(T)$, observe that each set $G_n \cap T$ is the image of a set $D_n \subset \R$, which is 
 contained either in $[a_1-\delta/3, a_1+ \delta/3]$ or in $[a_2-\delta/3, a_2+ \delta/3]$
 under a   $145$-Lipschitz  mapping $\vf: D_n \to \R^2$, and consequently
 $\cal H^1(G_n \cap T) \leq 145\cdot \delta$. Therefore, $\cal H^1(T) \leq 288\cdot 145\cdot \delta\leq 10^5 \cdot \delta$.
\end{proof}

\begin{remark}\label{noef}
We make nowhere any effort to find optimal multiplicative constants, which are surely much smaller.
\end{remark}

The following lemma  will be applied with $\alpha=1$ and  $\alpha=3/2$.

\begin{lemma}\label{sumint}
Let $\alpha \geq 1$,  $D>0$ and $g$ be a nonincreasing function on $(0,D]$ with $g(D)=0$. Set $\delta_k:= D 2^{-k}$, $k=0,1,2\dots$.
 Then the following conditions are equivalent.
 \begin{enumerate}
 \smallskip
 
 \item[(i)]   $\sum_{k=0}^{\infty}  g(\delta_k) \cdot (\delta_k)^{\alpha} < \infty.$
 \smallskip
 
 \item[(ii)]   $\sum_{k=0}^{\infty}  (g(\delta_{k+1})- g(\delta_k)) \cdot (\delta_k)^{\alpha} < \infty.$
 \smallskip
 
 \item[(iii)]  $\int_0^D g(x) x^{\alpha-1}\ dx \ < \infty.$
 \end{enumerate}
\end{lemma}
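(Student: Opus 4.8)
The plan is to prove the chain of implications (iii)$\Rightarrow$(i)$\Rightarrow$(ii)$\Rightarrow$(iii) by exploiting the geometric decay of the points $\delta_k = D2^{-k}$ and the monotonicity of $g$. All three quantities are, up to multiplicative constants, Riemann-type approximations of the same dyadic sum, so the core of the argument is a pair of elementary estimates: one comparing the integral on a dyadic block $[\delta_{k+1},\delta_k]$ with the corner values of $g$ there, and one Abel-summation (summation-by-parts) identity relating $\sum g(\delta_k)(\delta_k)^\alpha$ to $\sum (g(\delta_{k+1})-g(\delta_k))(\delta_k)^\alpha$.

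First I would handle (iii)$\Leftrightarrow$(i). On the interval $[\delta_{k+1},\delta_k]$, which has length $\delta_k/2$, monotonicity of $g$ gives $g(\delta_k)\le g(x)\le g(\delta_{k+1})$, and $x^{\alpha-1}$ is comparable to $(\delta_k)^{\alpha-1}$ there (within a factor $2^{\alpha-1}$, using $\alpha\ge 1$). Hence
$$
c\, g(\delta_k)\,(\delta_k)^{\alpha}\ \le\ \int_{\delta_{k+1}}^{\delta_k} g(x)\,x^{\alpha-1}\,dx\ \le\ C\, g(\delta_{k+1})\,(\delta_k)^{\alpha}
$$
for absolute constants $c,C$ depending only on $\alpha$. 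Summing over $k\ge 0$ and noting $(\delta_k)^\alpha = 2^\alpha(\delta_{k+1})^\alpha$, the left inequality shows (iii)$\Rightarrow$(i), and the right inequality shows that (iii) is bounded above by $C'\sum_k g(\delta_{k+1})(\delta_k)^\alpha$; after reindexing this last sum equals (up to the factor $2^\alpha$) $\sum_k g(\delta_k)(\delta_k)^\alpha$ plus the single term $g(\delta_0)(\delta_0)^\alpha\cdot$const, so (i)$\Rightarrow$(iii) as well. (Here I use $g(D)=g(\delta_0)=0$ or finiteness of $g(\delta_0)$ — in fact $g(\delta_0)=0$ makes that boundary term vanish.)

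It remains to connect (i) and (ii). Set $s_k:=g(\delta_{k+1})-g(\delta_k)\ge 0$. Summation by parts gives, for every $N$,
$$
\sum_{k=0}^{N} g(\delta_k)(\delta_k)^\alpha
=\sum_{k=0}^{N}\Big(\sum_{j=k}^{\infty}s_j\Big)(\delta_k)^\alpha
=\sum_{j=0}^{\infty}s_j\sum_{k=0}^{\min(j,N)}(\delta_k)^\alpha,
$$
where I used $g(\delta_k)=\sum_{j\ge k}s_j$ (valid because $g(\delta_m)\to g(D)$-type limit; more precisely $g(\delta_k)=g(\delta_0)-\sum_{j=0}^{k-1}s_j$ but since $g$ is nonincreasing and bounded below by... ) — I would instead phrase this more carefully: write $g(\delta_k)=g(\delta_N)+\sum_{j=k}^{N-1}s_j$ and let $N\to\infty$, using that $g\ge 0$ forces $g(\delta_N)$ to converge. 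Since $\sum_{k=0}^{j}(\delta_k)^\alpha$ is comparable to $(\delta_j)^\alpha$ (the dyadic geometric series with ratio $2^{-\alpha}<1$ is dominated by its last... wait, largest term is $(\delta_0)^\alpha$) — here the geometric sum $\sum_{k=0}^j (\delta_k)^\alpha = D^\alpha\sum_{k=0}^j 2^{-k\alpha}$ is bounded between $D^\alpha$ and $D^\alpha/(1-2^{-\alpha})$, i.e. it is comparable to the constant $D^\alpha$, NOT to $(\delta_j)^\alpha$. So this route shows $\sum_k g(\delta_k)(\delta_k)^\alpha$ is comparable to $D^\alpha\sum_j s_j = D^\alpha\,g(\delta_0)<\infty$ automatically?? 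That cannot be right, so the subtlety I must get straight is exactly which Abel rearrangement to use.

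The honest approach to (i)$\Leftrightarrow$(ii): I would avoid the dangerous rearrangement and instead argue directly. For (ii)$\Rightarrow$(i): since $g(\delta_k)=g(\delta_k)-g(\delta_\infty)\le\sum_{j\ge k}s_j$ (taking $g(\delta_\infty):=\lim g(\delta_k)\ge 0$), Tonelli gives $\sum_k g(\delta_k)(\delta_k)^\alpha\le\sum_k(\delta_k)^\alpha\sum_{j\ge k}s_j=\sum_j s_j\sum_{k\le j}(\delta_k)^\alpha\le \frac{D^\alpha}{1-2^{-\alpha}}\sum_j s_j$, which is finite iff the full variation $\sum_j s_j=g(\delta_0)-g(\delta_\infty)$ is finite — always true. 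Hmm — so (ii) as literally stated might be automatic too. I expect the real content is that the weight $(\delta_k)^\alpha$ inside (ii) is not constant, and that the intended reading pairs $s_j$ with $(\delta_j)^\alpha$ in a way that makes (ii) genuinely restrictive; so the main obstacle — and the thing I would pin down first before writing anything — is to identify the correct telescoping so that (i) and (ii) really are equivalent nontrivial conditions, most likely via the bound $g(\delta_k)\ge\sum_{j=k}^{2k}s_j$ combined with $(\delta_k)^\alpha$ comparable to $(\delta_{2k})^\alpha$... no. I would reexamine: the cleanest correct statement is that $g(\delta_k)(\delta_k)^\alpha$ and $s_k(\delta_k)^\alpha$ have the same sum up to the comparability $\sum_{k=0}^{N}g(\delta_k)\big((\delta_k)^\alpha-(\delta_{k+1})^\alpha\big)=\sum_{k}s_k(\delta_{k+1})^\alpha+(\text{boundary})$, and $(\delta_k)^\alpha-(\delta_{k+1})^\alpha=(1-2^{-\alpha})(\delta_k)^\alpha$ is comparable to $(\delta_k)^\alpha$. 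That is the key identity, and finishing from it is routine; establishing it cleanly (and correctly handling the boundary term using $g(D)=0$) is the one step I would be careful about.
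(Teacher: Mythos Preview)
Your treatment of (i)$\Leftrightarrow$(iii) is correct and is essentially the paper's argument.  The confusion in (i)$\Leftrightarrow$(ii) stems from a single sign error in the telescoping: with $s_j:=g(\delta_{j+1})-g(\delta_j)\ge 0$ and $g(\delta_0)=g(D)=0$, one has
\[
g(\delta_k)=\sum_{j=0}^{k-1}s_j,\qquad\text{not}\qquad g(\delta_k)=\sum_{j\ge k}s_j.
\]
With the correct identity, Tonelli gives at once
\[
\sum_{k\ge 0}g(\delta_k)(\delta_k)^\alpha
=\sum_{j\ge 0}s_j\sum_{k>j}(\delta_k)^\alpha
=\frac{2^{-\alpha}}{1-2^{-\alpha}}\sum_{j\ge 0}s_j(\delta_j)^\alpha,
\]
so the series in (i) and (ii) are \emph{equal} up to a positive constant --- this is exactly the Abel identity $\sum u_nr^n=(1-r)\sum\big(\sum_{i\le n}u_i\big)r^n$ (with $r=2^{-\alpha}$) that the paper invokes.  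Your alarming conclusion that ``(ii) might be automatic'' arose precisely because the wrong-direction sum $\sum_{k\le j}(\delta_k)^\alpha$ is comparable to the constant $D^\alpha$, whereas the correct tail $\sum_{k>j}(\delta_k)^\alpha$ is comparable to $(\delta_j)^\alpha$.  The summation-by-parts identity you land on at the very end is correct and equivalent, but once the telescoping is oriented the right way there is no boundary term to manage at all.
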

\begin{proof}
Set  $u_k:= g(\delta_{k+1})- g(\delta_k)$, $k=0,1,\dots$, and $r:= 2^{- \alpha}$. Then $s_k:= \sum_{i=0}^k u_i= g(\delta_{k+1})$
 and $(\delta_k)^{\alpha} = D^{\alpha} r^k$,  $k=0,1,\dots$. It is well-known (see, e.g. 
\cite{WZ}*{p.~231}) that (for any sequence $(u_k)$
  and $0<r<1$)
  \begin{equation}\label{abel}
   \sum_{n=0}^{\infty} u_nr^n = (1-r) \sum_{n=0}^{\infty} s_n r^n,
  \end{equation}
   whenever one of the series converges. Consequently
   \begin{equation}\label{sdifer}
   \sum_{k=0}^{\infty}  (g(\delta_{k+1})-g(\delta_k) ) 2^{-k\alpha} < \infty
   \end{equation}
   if and only if

   \begin{equation}\label{bezdif}
   \sum_{k=0}^{\infty}  g(\delta_{k+1}) 2^{-k \alpha} < \infty.
   \end{equation}
   Since  (i) is clearly equivalent to \eqref{bezdif} and (ii) to \eqref{sdifer}, we proved that (i)$\iff$(ii).
   
In order to prove that (i)$\iff$(iii), denote $I_k:= \int_{\delta_{k+1}}^{\delta_k} g(x) x^{\alpha-1}   \, dx $
    and observe that   $\int_0^D g(x) x^{\alpha-1}\, dx = \sum_{k=0}^{\infty} I_k$. By properties of $g$, we have
    $$ g(\delta_k) (\delta_{k+1})^{\alpha-1} \leq g(x)  x^{\alpha-1} \leq g(\delta_{k+1}) (\delta_k)^{\alpha-1},\quad x \in [\delta_{k+1}, \delta_k],
  $$ 
  and consequently
\begin{align*}
g(\delta_k) (\delta_k)^{\alpha} 2^{-\alpha} = g(\delta_k) (\delta_{k+1})^{\alpha-1} \delta_{k+1} \leq I_k
 \leq& g(\delta_{k+1}) (\delta_{k})^{\alpha-1} \delta_{k+1}\\ 
=& 2^{\alpha-1} g(\delta_{k+1}) (\delta_{k+1})^{\alpha},
\end{align*}
which easily implies that (i)$\iff$(iii).
\end{proof}

\begin{lemma}\label{peter}
Let  $\emptyset \neq F \subset \R^2$ be compact. Set $D:= \diam F$,  $\delta_n:= D\,  2^{-n},\ n=0,1,\dots$.  For  $0<\alpha < \beta$, denote $H(\alpha,\beta):= \{x\in \R^2:\ \alpha \leq d_F(x)\leq \beta\}$ and  set $H_n:= H(\delta_{n+1}, \delta_n)$.

Then, for each $n$, there exists a finite set $P_n \subset H_n$ such that
  \begin{equation}\label{covhn}
\text{the system}\ \   \tilde{\cal B}_n:= \{ \overline{B}(x, \delta_n/ 8):\ x \in P_n\}\ \ \ 
\text{covers}\ \ \  H_n
\end{equation}
 and, for  $p_n:= \card P_n$, we have
\begin{equation}\label{sumaan}
\sum_{n=0}^{\infty} p_n (\delta_n)^2   \leq  10^{5} \cdot D^2.
\end{equation}
\end{lemma}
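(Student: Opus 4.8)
The plan is to take $P_n$ to be a maximal $(\delta_n/8)$-separated subset of $H_n$; the covering property \eqref{covhn} will then be automatic from maximality, and all the work goes into bounding $p_n=\card P_n$ by an area--packing argument that exploits the fact that the sets $H_n$ are thin dyadic annuli around $F$, so that their total area is $O(D^2)$.

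We may assume $D>0$ (if $D=0$ then $F$ is a single point, $H_n=F$ for every $n$, and $P_n:=F$ works trivially). First I would fix $x_0\in F$; since $\diam F=D$ we have $F\subset\overline B(x_0,D)$, hence the parallel set $F_D$ satisfies $F_D\subset\overline B(x_0,2D)$, so $\lambda_2(F_D)\le 4\pi D^2$, and likewise $F_{2D}\subset\overline B(x_0,3D)$. Each $H_n=d_F^{-1}([\delta_{n+1},\delta_n])$ is compact (closed by continuity of $d_F$, bounded since $H_n\subset F_D$), and $\bigcup_{n\ge0}H_n=\{z:\ 0<d_F(z)\le D\}\subset F_D$. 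For each $n$ I choose $P_n\subset H_n$ maximal with the property that $\|p-q\|\ge\delta_n/8$ for all distinct $p,q\in P_n$; it is finite since $H_n$ is bounded and the points are $\delta_n/8$-separated. If some $y\in H_n\setminus P_n$ had $\|y-x\|\ge\delta_n/8$ for all $x\in P_n$, then $P_n\cup\{y\}$ would still be $\delta_n/8$-separated, contradicting maximality; hence every $y\in H_n$ lies in some $\overline B(x,\delta_n/8)$ with $x\in P_n$, which is \eqref{covhn}.

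Because the centres in $P_n$ are $\delta_n/8$-separated, the open balls $B(x,\delta_n/16)$, $x\in P_n$, are pairwise disjoint. The crucial point is that these balls are trapped inside boundedly many consecutive annuli: if $x\in P_n$ and $z\in B(x,\delta_n/16)$, then, using $\delta_{n+1}\le d_F(x)\le\delta_n$ and the $1$-Lipschitz property of $d_F$,
$$d_F(z)\in\Big[\tfrac{7}{16}\delta_n,\ \tfrac{17}{16}\delta_n\Big]\subset[\delta_{n+2},\delta_{n-1}]\qquad(n\ge1),$$
so $z\in H_{n-1}\cup H_n\cup H_{n+1}$; for $n=0$ one only gets $z\in F_{2D}$. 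Comparing the total area of the disjoint balls with the area of the region containing them gives
$$p_n\,\pi\Big(\tfrac{\delta_n}{16}\Big)^2\le\lambda_2(H_{n-1})+\lambda_2(H_n)+\lambda_2(H_{n+1})\quad(n\ge1),\qquad p_0\,\pi\Big(\tfrac{D}{16}\Big)^2\le\lambda_2(F_{2D})\le 9\pi D^2,$$
hence $p_0\le 2304$ and $p_n(\delta_n)^2\le\tfrac{256}{\pi}\big(\lambda_2(H_{n-1})+\lambda_2(H_n)+\lambda_2(H_{n+1})\big)$ for $n\ge1$.

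To finish I would bound $\sum_n\lambda_2(H_n)$. Since $H_n\cap H_m=\emptyset$ for $|n-m|\ge2$ and $H_n\cap H_{n+1}=\{z:\ d_F(z)=\delta_{n+1}\}$, the inclusion--exclusion identity for a sequence of sets with only consecutive overlaps gives $\sum_n\lambda_2(H_n)=\lambda_2\big(\bigcup_nH_n\big)+\sum_n\lambda_2(\{d_F=\delta_{n+1}\})\le 2\lambda_2(F_D)\le 8\pi D^2$ (the level sets $\{d_F=\delta_{n+1}\}$ are pairwise disjoint subsets of $F_D$). Therefore
$$\sum_{n=0}^\infty p_n(\delta_n)^2\le 2304\,D^2+\frac{256}{\pi}\sum_{n\ge1}\big(\lambda_2(H_{n-1})+\lambda_2(H_n)+\lambda_2(H_{n+1})\big)\le 2304\,D^2+\frac{256}{\pi}\cdot3\cdot8\pi D^2<10^5 D^2,$$
which is \eqref{sumaan}. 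The main obstacle is precisely the choice made above: the naive packing bound $p_n\lesssim\lambda_2\big((H_n)_{\delta_n/16}\big)/\delta_n^2\lesssim D^2/\delta_n^2$ only yields $p_n(\delta_n)^2\lesssim D^2$, which does not sum; one has to observe that the small packing balls are confined to $H_{n-1}\cup H_n\cup H_{n+1}$ and that $\sum_n\lambda_2(H_n)=O(D^2)$ because consecutive dyadic distance-annuli around $F$ meet only in a single level set.
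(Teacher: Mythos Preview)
Your proof is correct and follows essentially the same strategy as the paper: pick centres in $H_n$ so that balls of radius $\delta_n/8$ cover while smaller concentric balls are disjoint, observe that these small balls are trapped in a slightly thickened annulus, and use that the thickened annuli have total area $O(D^2)$. The only cosmetic differences are that the paper obtains $P_n$ via the $5r$-covering theorem (disjoint balls of radius $\delta_n/40$ inside the enlarged annuli $\tilde H_n=H(\delta_{n+1}-\delta_n/8,\delta_n+\delta_n/8)$) and bounds $\sum_n\lambda_2(\tilde H_n)$ by splitting into even and odd $n$, whereas you use a maximal $\delta_n/8$-separated set and bound $\sum_n\lambda_2(H_n)$ via inclusion--exclusion on consecutive overlaps; the paper's even/odd trick is slightly cleaner, but both arguments are equivalent in substance.
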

\begin{proof}
Set  $\tilde H_n:= H(\delta_{n+1}- \delta_n/8, \delta_n+ \delta_n/8),
 \ \ n=0,1,2\dots,$ 
and observe that both systems
 $\tilde H_0, \tilde H_2, \tilde H_4,\dots$ and     $\tilde H_1, \tilde H_3, \tilde H_5,\dots$
 are disjoint, $\tilde H_n \subset B(x_0, 3D)$ for each  $n$ and $x_0 \in F$, and consequently
\begin{equation}\label{sumamer}
\sum_{n=0}^{\infty} \lambda_2(\tilde H_n) \leq 2 \pi 9 D^2.
\end{equation}
Now let  $n\geq 0$ be fixed. Since the system of closed balls
$$\cal B_n:= \{ \overline{B}(x, \delta_n/ 40):\ x \in H_n\}$$
covers $H_n$, by the $5r$-covering theorem (see, e.g., \cite{Ma}*{Theorem~2.1}) there exists a (necessarily finite) set $P_n \subset H_n$
 such that 
$$\text{the system}\ \  \cal B_n^*:= \{ \overline{B}(x, \delta_n/ 40):\ x \in P_n\}\ \ \ \text{is disjoint }$$
 and \eqref{covhn} holds.

 Obviously  $\bigcup \cal B_n^* \subset \tilde H_n$ and so
$$ \lambda_2(\bigcup \cal B_n^*) = p_n \cdot \frac{\pi (\delta_n)^2}{40^2} \leq \lambda_2(\tilde H_n).$$
Using also \eqref{sumamer}, we obtain
$$ \sum_{n=0}^{\infty} p_n (\delta_n)^2 \leq \frac{40^2}{\pi} \cdot 18 \pi D^2$$
 which implies \eqref{sumaan}. 
\end{proof}

\begin{proposition}\label{mkrb}
For a compact
 $\emptyset \neq F \subset \R^2$ and $r>0$, set
$$  h(r):=\cal H^1 \{x \in \Crit(d_F): d_F(x) >r\}.$$
 Then 
$$ \int_0^{\infty} h(r)\ dr < \infty.$$
\end{proposition}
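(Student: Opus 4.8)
The plan is to estimate $\int_0^\infty h(r)\,dr$ by cutting $\Crit(d_F)$ into the dyadic shells
$$H_n:=\{x\in\R^2:\ \delta_{n+1}\le d_F(x)\le\delta_n\},\qquad \delta_n:=D\,2^{-n},\quad D:=\diam F,$$
and bounding $\cal H^1(\Crit(d_F)\cap H_n)$ by means of Lemmas~\ref{nakoulib} and~\ref{peter}. We may assume $D>0$; otherwise $F$ is a single point, and since by \eqref{cvdiam} every $x\in\Crit(d_F)$ satisfies $d_F(x)\le D=0$, the set $\{x\in\Crit(d_F):\ d_F(x)>r\}$ is empty for every $r>0$, so $h\equiv 0$ on $(0,\infty)$ and there is nothing to prove. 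The same bound $d_F\le D$ on $\Crit(d_F)$ gives $h(r)=0$ for $r\ge D$, so it suffices to bound $\int_0^D h(r)\,dr$. Note that $h$ is nonincreasing (hence measurable) and finite-valued on $(0,\infty)$: for $r>0$ the set $\{x\in\Crit(d_F):\ d_F(x)>r\}$ is contained in the union of the finitely many $H_n$ with $\delta_n>r$, each of which will be shown to have finite $\cal H^1$-measure.

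The core step will be the shell estimate
$$\cal H^1(\Crit(d_F)\cap H_n)\le 10^5\,p_n\,\delta_n .$$
To obtain it, I would apply Lemma~\ref{peter} to get finite sets $P_n\subset H_n$, with $p_n:=\card P_n$, such that the balls $\overline B(x,\delta_n/8)$, $x\in P_n$, cover $H_n$ and $\sum_{n\ge0}p_n\delta_n^2\le 10^5 D^2$. For each $x\in P_n\subset H_n$ we have $d_F(x)\ge\delta_{n+1}>0$, so Lemma~\ref{nakoulib}, applied with $a:=x$ and $\delta:=\delta_{n+1}$, yields $\cal H^1(\Crit(d_F)\cap\overline B(x,\delta_{n+1}/3))\le 10^5\delta_{n+1}$. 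Since $\delta_n/8<\delta_n/6=\delta_{n+1}/3$, the covering ball $\overline B(x,\delta_n/8)$ lies inside $\overline B(x,\delta_{n+1}/3)$, whence $\cal H^1(\Crit(d_F)\cap\overline B(x,\delta_n/8))\le 10^5\delta_n$; summing over $x\in P_n$ and using that these balls cover $H_n$ gives the claimed bound (and, in particular, the finiteness used above).

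To finish, I would note that for $r\in[\delta_{n+1},\delta_n]$ the monotonicity of $h$ together with the inclusion $\{x\in\Crit(d_F):\ d_F(x)>\delta_{n+1}\}\subset\bigcup_{k=0}^{n}H_k$ (a consequence of $d_F\le D=\delta_0$ on $\Crit(d_F)$) gives $h(r)\le h(\delta_{n+1})\le 10^5\sum_{k=0}^{n}p_k\delta_k$. Interchanging the resulting sums of nonnegative terms and using $\sum_{n\ge k}(\delta_n-\delta_{n+1})=\delta_k$, I obtain
$$\int_0^D h(r)\,dr\le 10^5\sum_{n=0}^{\infty}(\delta_n-\delta_{n+1})\sum_{k=0}^{n}p_k\delta_k=10^5\sum_{k=0}^{\infty}p_k\delta_k^2\le 10^{10}(\diam F)^2<\infty .$$
(Equivalently, one may use Cavalieri's principle $\int_0^\infty h(r)\,dr=\int_{\Crit(d_F)}d_F\,d\cal H^1$ and estimate the latter integral shell by shell via the shell estimate.) Since Lemmas~\ref{nakoulib} and~\ref{peter} carry all the real weight, no step is genuinely hard; the main (and essentially only) point requiring care is to keep the radius $\delta_n/8$ of the balls covering $H_n$ strictly below the radius $\delta_{n+1}/3$ within which Lemma~\ref{nakoulib} controls $\Crit(d_F)$, so that the local $\cal H^1$-bound is applicable, together with the harmless overcounting produced by the overlaps of the shells $H_n$.
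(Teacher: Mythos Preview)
Your proof is correct and follows essentially the same route as the paper: decompose into the dyadic shells $H_n$ via Lemma~\ref{peter}, apply Lemma~\ref{nakoulib} on each covering ball to get the shell estimate $\cal H^1(\Crit(d_F)\cap H_n)\le 10^5 p_n\delta_n$, and then sum. The only notable difference is the final step: the paper bounds $h(\delta_{n+1})-h(\delta_n)\le 10^5 p_n\delta_n$ and then invokes Lemma~\ref{sumint} (with $\alpha=1$) to pass from condition~(ii) to condition~(iii); you instead bound $h(\delta_{n+1})\le 10^5\sum_{k\le n}p_k\delta_k$ and interchange the two sums directly. Your variant is slightly more self-contained (it avoids Lemma~\ref{sumint}) and even yields the explicit bound $\int_0^\infty h\le 10^{10}(\diam F)^2$; the paper's route is a bit more conceptual and reuses a lemma needed later anyway. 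Your explicit handling of the degenerate case $D=0$ is a small improvement over the paper, which tacitly assumes $D>0$.
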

\begin{proof}
First note that $h$ is nonnegative and nonincreasing. Let $\delta_n$, $H_n$ and $P_n \subset H_n$
 be as in Lemma \ref{peter}. Then \eqref{covhn} and \eqref{sumaan} hold. For each $n\geq 0$ and $x\in P_n$ set  $T_{n,x}:= \Crit(d_F) \cap \overline B(x,\delta_n/8)$. Since, for $x \in P_n$, we have
 $d_F(x) \geq \delta_{n+1} = \delta_n/2 \geq 3\cdot (\delta_n/8)$, Lemma \ref{nakoulib} implies
 that $\cal H^1(T_{n,x}) \leq 10^5\cdot 3 \cdot ( \delta_n/8)$. Therefore we obtain by \eqref{covhn}
$$  \kappa_n:= \cal H^1(\Crit(d_F) \cap H_n) \leq 10^5 p_n \delta_n$$
 and consequently
$$  h(\delta_{n+1}) - h(\delta_n) \leq \kappa_n \leq 10^5 p_n \delta_n.$$
 
 Thus condition (ii) of Lemma \ref{sumint} with $\alpha=1$ holds by \eqref{sumaan}.
 Consequently condition (iii) of Lemma \ref{sumint} holds and so 
 $ \int_0^{\infty} h(r)\ dr < \infty$ follows since $h(r)=0$ for $r \geq D$ by \eqref{cvdiam}.
\end{proof}

\begin{lemma}\label{nakouli}
Let $\emptyset \neq F\subset \R^2$ be a compact set, let $a\in \R^2$ and $d_F(a)= :\delta>0$. Then
 $d_F(\Crit(d_F) \cap \overline B(a, \delta/3))$ is a $\BT$ set and 
$$ G_{1/2} (d_F(\Crit(d_F) \cap \overline B(a, \delta/3)))  \leq   6 \cdot 10^4 \sqrt{\delta}. $$
\end{lemma}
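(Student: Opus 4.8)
The goal is to bound $G_{1/2}$ of the image $d_F(T)$, where $T:=\Crit(d_F)\cap\overline B(a,\delta/3)$. The plan is to combine the covering of $T$ from Lemma~\ref{nakoulib} with the Ferry-type quadratic estimate \eqref{ferryn} and then quote Lemma~\ref{obra2}. First I would invoke Lemma~\ref{nakoulib} to write $T\subset G_1\cup\cdots\cup G_m$ with $m\le 288$, each $G_j$ a $144$-Lipschitz graph. Setting $P_j:=T\cap G_j$, this is a compact subset of a $144$-Lipschitz graph contained in $\overline B(a,\delta/3)$, so $\diam P_j\le 2\delta/3$. Define $\kappa:=d_F$ restricted to $P_j$. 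Every point of $P_j$ lies in $\Crit(d_F)\setminus F$ (since $d_F\ge\delta-\delta/3=2\delta/3>0$ there), and moreover $d_F\ge 2\delta/3$ on all of $P_j$, so \eqref{ferryn} gives, for $p_1,p_2\in P_j$,
$$|\kappa(p_1)-\kappa(p_2)|\le \frac{1}{2\cdot(2\delta/3)}\|p_1-p_2\|^2=\frac{3}{4\delta}\|p_1-p_2\|^2.$$
Thus the hypothesis \eqref{abfe} of Lemma~\ref{obra2} holds with $C=3/(4\delta)$, $M=144$, $d=2\delta/3$.

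Next I would apply Lemma~\ref{obra2} to each $P_j$: it yields that $\kappa(P_j)=d_F(P_j)$ is a $\BT$ set with
$$G_{1/2}(d_F(P_j))\le C^{1/2}(M+1)d=\Big(\tfrac{3}{4\delta}\Big)^{1/2}\cdot 145\cdot\tfrac{2\delta}{3}=145\cdot\tfrac{2}{3}\cdot\tfrac{\sqrt3}{2}\cdot\sqrt\delta\le 84\,\sqrt\delta.$$
Then $d_F(T)=\bigcup_{j=1}^m d_F(P_j)$ is a compact subset of a finite union of $\BT$ sets, hence a $\BT$ set by Corollary~\ref{adbt}. For the quantitative bound, since all the $P_j$ lie in the interval of values $[2\delta/3,\,\delta+\,\text{something}]\subset[0,D]$, more precisely $d_F(T)\subset[2\delta/3,4\delta/3]$ (a bounded interval of length $\le 2\delta/3$), I would apply Lemma~\ref{aditgap} with $k=m\le 288$ and $[c,d]$ an interval of length at most, say, $2\delta$ containing all of $d_F(T)$:
$$G_{1/2}(d_F(T))\le \sum_{j=1}^m G_{1/2}(d_F(P_j))+(m-1)(d-c)^{1/2}\le 288\cdot 84\sqrt\delta+287\cdot\sqrt{2\delta}.$$
This is bounded by $6\cdot 10^4\sqrt\delta$, as claimed.

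The main obstacle, such as it is, is purely bookkeeping: one must check that \eqref{ferryn} applies, i.e.\ that $T\subset\Crit(d_F)\setminus F$ and that the relevant $\min$ of distance values is bounded below (here by $2\delta/3$ rather than $\delta/2$, using that $\overline B(a,\delta/3)$ keeps us away from $F$), and then that the numerical constants from Lemmas~\ref{nakoulib}, \ref{obra2} and \ref{aditgap} multiply out below $6\cdot 10^4$. As Remark~\ref{noef} stresses, no effort is made to optimize these constants, so a generous estimate suffices. No genuinely new idea is needed beyond assembling the pieces already established in the Preliminaries.
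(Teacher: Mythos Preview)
Your proposal is correct and follows essentially the same approach as the paper's own proof: cover $T$ by Lipschitz graphs via Lemma~\ref{nakoulib}, apply Ferry's inequality \eqref{ferryn} to get the quadratic bound needed for Lemma~\ref{obra2} on each piece, and combine using Lemma~\ref{aditgap}. The only differences are cosmetic choices of constants (the paper uses the cruder $C=\delta^{-1}$ and $d=\delta$ in Lemma~\ref{obra2}, yielding $145\sqrt\delta$ per graph rather than your $84\sqrt\delta$), which is immaterial given Remark~\ref{noef}.
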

\begin{proof}
  By Lemma \ref{nakoulib} the set $  T:= \Crit (d_F) \cap \overline B(a, \delta/3)$
 can  be covered by 
 $m$  $144$-Lipschitz graphs $G_1, \dots, G_m$,
 where $m \leq 288$. Thus
	\begin{equation}\label{inklsj}
 d_F(T)\subset \bigcup_{k=1}^m  d_F(T\cap G_k).
\end{equation}
 By \eqref{ferryn} we have
\begin{equation}\label{fefu}
 |d_F(v) - d_F(w)| \leq \delta^{-1} \|v - w \|^2\ \ \text{for every}\ \ v,w \in T.
\end{equation}
Let $1 \leq k \leq m$ be given.
Applying  Lemma \ref{obra2} to $S:= G_k$, $P:= T \cap G_k$, $d:= \delta$, $\kappa:= d_F\restriction_P$ and
 $C:= \delta^{-1}$, we obtain that  $d_F(T\cap G_k)$ is a $\BT$ set with
 $$   G_{1/2}( d_F(T\cap G_k)) =  G_{1/2}(\kappa(P)) \leq \delta^{-1/2}\cdot 145 \cdot \delta = 145 \cdot \sqrt{\delta}.$$
 Observing that $d_F(T) \subset [2\delta/3, 4\delta/3]$ and using \eqref{inklsj}  and Lemma \ref{aditgap}, we obtain that $d_F(T)$ is a $\BT$ set with
$$G_{1/2}(d_F(T)) \leq   m \cdot 145 \sqrt{\delta} + (m-1) \sqrt{\delta} \leq 288\cdot 146\cdot \sqrt{\delta}
 \leq 6\cdot 10^4\cdot \sqrt{\delta}.$$
\end{proof}

As an easy corollary, we obtain the following part of Theorem \ref{odrn}
\begin{proposition}\label{zlfud}
If  $\emptyset \neq F \subset \R^2$ is compact and $\ep>0$, then  $\cv(d_F) \cap [\ep, \infty)$ is a
 $\BT$ set. 
In particular, each of the  sets  $\cv(d_F) \cap [\ep, \infty)$,  $L_F \cap [\ep, \infty)$
and  $T_F \cap [\ep, \infty)$ has zero $1/2$-dimensional  Minkowski content.

Consequently, $\cv(d_F)$ is a countable union of sets of zero $\frac 12$-dimensional Min\-kow\-ski content and so $\dim_P(\cv(d_F))\leq \frac 12$.

\end{proposition}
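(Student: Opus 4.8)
The plan is to derive Proposition~\ref{zlfud} from the already-established local estimate in Lemma~\ref{nakouli} by a covering argument combined with the summability criterion of Lemma~\ref{sumint}. First I would fix $\ep>0$, set $D:=\diam F$, and (using \eqref{cvdiam}) note that $\cv(d_F)\subset[0,D]$, so that it suffices to show $\cv(d_F)\cap[\ep,D]$ is a $\BT$ set. Introduce the dyadic scales $\delta_n:=D\,2^{-n}$ and the annular shells $H_n:=H(\delta_{n+1},\delta_n)$ together with the finite point sets $P_n\subset H_n$ and the covering balls $\overline B(x,\delta_n/8)$, $x\in P_n$, provided by Lemma~\ref{peter}; these satisfy \eqref{covhn} and $\sum_n p_n(\delta_n)^2\le 10^5D^2$ where $p_n=\card P_n$.

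Next I would localize. For each $x\in P_n$ we have $d_F(x)\ge\delta_{n+1}=\delta_n/2\ge 3(\delta_n/8)$, so Lemma~\ref{nakouli} applies with $a:=x$ and $\delta:=d_F(x)$, giving that $d_F(\Crit(d_F)\cap\overline B(x,\delta_n/8))$ is a $\BT$ set with $G_{1/2}$-gap sum at most $6\cdot10^4\sqrt{d_F(x)}\le 6\cdot10^4\sqrt{2}\,\sqrt{\delta_n}$ (using $d_F(x)\le\delta_n$ on $H_n$). Since $\Crit(d_F)\cap d_F^{-1}(H_n)$ is covered by these balls by \eqref{covhn}, and $d_F(\Crit(d_F)\cap H_n)\subset[\delta_{n+1},\delta_n]$ is contained in an interval of length $\delta_n$, Lemma~\ref{aditgap} yields that $C_n:=\cv(d_F)\cap[\delta_{n+1},\delta_n]=d_F(\Crit(d_F)\cap H_n)$ is a $\BT$ set with $G_{1/2}(C_n)\le p_n\cdot 6\cdot10^4\sqrt2\,\sqrt{\delta_n}+(p_n-1)\sqrt{\delta_n}\le c\,p_n\sqrt{\delta_n}$ for an absolute constant $c$.

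Now I would assemble the global bound. Pick $N$ with $\delta_{N+1}<\ep\le\delta_N$, so $\cv(d_F)\cap[\ep,D]\subset\bigcup_{n=0}^{N}C_n$, a finite union of $\BT$ sets; by Corollary~\ref{adbt} it is a $\BT$ set, proving the first assertion. For the Minkowski-content claim, by \eqref{nulmin} every $\BT_{1/2}$ set has zero $1/2$-dimensional Minkowski content, so $\cv(d_F)\cap[\ep,\infty)$ — and hence its subsets $L_F\cap[\ep,\infty)$ and $T_F\cap[\ep,\infty)$ via \eqref{tlcv} — has $\mathcal M^{1/2}=0$. Finally, writing $\cv(d_F)=\{0\}\cup\bigcup_{k\ge1}\big(\cv(d_F)\cap[1/k,\infty)\big)$ exhibits $\cv(d_F)$ as a countable union of sets of zero $\tfrac12$-dimensional Minkowski content; since each such set has $\overline{\dim}_M\le\tfrac12$, the countable stability of packing dimension (the displayed identity after \eqref{nedi}, together with \eqref{nedi} itself) gives $\dim_P(\cv(d_F))\le\tfrac12$.

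The argument is essentially bookkeeping once Lemma~\ref{nakouli} and Lemma~\ref{peter} are in hand; the only point requiring a little care is the passage from the \emph{ball-wise} gap-sum bounds to a bound on $G_{1/2}(C_n)$ for the whole shell, which needs Lemma~\ref{aditgap} with the correct interval $[\delta_{n+1},\delta_n]$ and the finiteness of $P_n$ — but since $p_n$ is finite for each $n$ and the weighted sum $\sum_n p_n\delta_n$ need only be controlled on the finitely many indices $n\le N$ (not all of them), even a crude bound suffices and no appeal to Lemma~\ref{sumint} is actually needed for this particular proposition. (Lemma~\ref{sumint} is what will be needed for the sharper Theorem~\ref{chardon}, where the dependence on $\ep$ must be removed.)
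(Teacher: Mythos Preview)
Your argument is correct, but it is more elaborate than the paper's. The paper proves Proposition~\ref{zlfud} by a direct compactness argument: the set $C:=d_F^{-1}([\ep,\diam F])$ is compact, so it can be covered by finitely many balls $\overline B(z_i,r(z_i))$ on each of which Lemma~\ref{nakouli} guarantees that $d_F(\Crit(d_F)\cap\overline B(z_i,r(z_i)))$ is a $\BT$ set; then Corollary~\ref{adbt} finishes the job. No dyadic scales, no Lemma~\ref{peter}, no quantitative bookkeeping.

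Your route via Lemma~\ref{peter} and the shell decomposition $H_n$ is exactly what the paper does later, in the proof of Proposition~\ref{Pchardon}, where the quantitative bound $S\le 10^{10}D^2$ is needed. You have, in effect, reproduced the harder argument and then thrown away the quantitative content by truncating at $n\le N$. As you yourself observe in your last paragraph, neither Lemma~\ref{sumint} nor the summability estimate \eqref{sumaan} is actually used once you restrict to finitely many shells --- so invoking Lemma~\ref{peter} (whose whole point is \eqref{sumaan}) is overkill here. Two minor remarks: the factor $\sqrt{2}$ in your bound $6\cdot10^4\sqrt{2}\,\sqrt{\delta_n}$ is spurious (since $d_F(x)\le\delta_n$ gives $\sqrt{d_F(x)}\le\sqrt{\delta_n}$ directly); and when you apply Corollary~\ref{adbt} you are tacitly using that $\cv(d_F)\cap[\ep,D]$ is compact, which follows from \eqref{crcl} and the boundedness of $\Crit(d_F)\cap d_F^{-1}([\ep,D])$ but deserves a word.
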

\begin{proof}
For each $z \in \R^2 \setminus F$, we choose by using Lemma~\ref{nakouli} an $r(z)>0$ such that
 $d_F(\Crit(d_F) \cap \overline B (z,r(z)))$ is a $\BT$ set. The set $C:= (d_F)^{-1}([\ep,\diam F])$ is clearly  compact. Consequently we can choose points
 $z_1,\dots, z_p \in C$ such that  $C \subset  \bigcup_{i=1}^p  \overline B (z_i,r(z_i))$.
 Therefore
\begin{equation}\label{konpokr}
 d_F(\Crit(d_f) \cap C) \subset \bigcup_{i=1}^p  d_F(\Crit(d_F) \cap \overline B (z_i,r(z_i))).
\end{equation}
By  \eqref{cvdiam},  $\cv (d_F) \cap [\ep,\infty)= d_F(\Crit(d_F) \cap C)$. So, 
 $\cv (d_F) \cap [\ep,\infty)$ is compact and  \eqref{konpokr} and  Corollary \ref{adbt} imply
 that $\cv (d_F) \cap [\ep,\infty)$
is a $\BT$ set. The rest of the assertion follows by \eqref{tlcv}, \eqref{nulmin} and \eqref{dimp}.
\end{proof}
\begin{remark}\label{inde} 
 We have proved (see \eqref{nuldim} and \eqref{nedi}), by different arguments,  Fu's \cite{Fu} results saying that
$\cal \cal H^{1/2}(\cv(d_F))=0$ and  that  $\cv(d_F) \cap [\ep,\infty)$ has the entropy (=upper Minkowski) dimension at most $\frac 12$.   
\end{remark}

To prove whole Theorem \ref{odrn}, we need the following proposition, whose idea of the construction  goes back to \cite{Fe}.

\begin{proposition}\label{konstr}
Let $K \neq \emptyset$ be a $\BT$ set with $0<a:= \min K \leq b:= \max K$.
 Set  $g(y):= \sqrt{2b}\cdot  G_{1/2} (K \cap [a,y]),\ \ y \in K,$
 and 
$$ F:= \{(g(y), \pm y): y \in K\}.$$
Then $F$ is a compact set, $K\subset L_F$, and so $K\subset \cv(d_F)$.
\end{proposition}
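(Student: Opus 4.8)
The plan is to show that $F$, as constructed, has the property that each point $(g(y_0), y_0) \in F$ (and its mirror image) forces the distance sphere $S_{r}(F)$ to fail to be a Lipschitz $1$-manifold at a point of the $x$-axis, where $r = g(y_0)$... wait, that is not quite the right radius. Let me reconsider: the geometry here is that for a point $z = (t,0)$ on the $x$-axis with $t > 0$, the distance $d_F(z)$ is attained (by symmetry) simultaneously at $(g(y), y)$ and $(g(y), -y)$ for the minimizing $y \in K$. The key observation from Ferry's construction is that the function $g$ grows so slowly (since $G_{1/2}(K \cap [a,y])$ has bounded gap sum) that for \emph{many} values of $y \in K$ the point $(g(y), y)$ is a genuine nearest point to some axis point, so that $z = (t,0)$ has (at least) two nearest points in $F$, symmetric about the axis. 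At such a $z$, the function $d_F$ has a ``corner'' and $S_{d_F(z)}(F)$ has an outward-pointing cusp/corner, hence is not a Lipschitz graph near $z$; equivalently $z \in \Crit(d_F)$ with $d_F(z) \in K$. So the heart of the matter is to identify, for each $y_0 \in K$, an axis point $z_{y_0} = (t_0, 0)$ whose unique-up-to-reflection nearest point in $F$ is $(g(y_0), \pm y_0)$, and with $d_F(z_{y_0})$ equal to a prescribed value belonging to $K$.

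First I would make $F$ compact: since $K$ is compact and $g$ is continuous on $K$ (it is monotone and, by the convergence of $G_{1/2}(K)$, it has no jumps — one checks $g(y') - g(y) = \sqrt{2b}\, G_{1/2}(K \cap [y,y'])$ tends to $0$ as $y' \downarrow y$ because the gap sum over a shrinking set with convergent total goes to $0$), the image $F$ is the continuous image of $K \times \{+1,-1\}$, hence compact. Second, I would set up the candidate axis point. For fixed $y_0 \in K$, I want a point $z = (c, 0)$ equidistant from $(g(y_0), y_0)$ and closer to it than to any other point of $F$. The squared distance from $(c,0)$ to $(g(y), y)$ is $(c - g(y))^2 + y^2$. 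I would choose $c$ (a function of $y_0$) so that this expression, as a function of $y$ ranging over $K$, is minimized exactly at $y = y_0$. Differentiating formally, the critical condition is $2(g(y) - c) g'(y) + 2y = 0$, i.e. $(c - g(y)) g'(y) = y$; since $g' $ is large where $K$ has gaps and $g$ is flat (no growth) on the ``Cantor-like'' part, the right choice is roughly $c = g(y_0) + $ something comparable to $y_0 / (\text{local slope})$. The precise verification — that with this $c$ the point $(g(y_0), y_0)$ really is the global nearest point in $F$ — is exactly where the Besicovitch–Taylor estimate on $g$ enters: one uses that for $y \ne y_0$, $|g(y) - g(y_0)| = \sqrt{2b}\,G_{1/2}(K \cap (\min(y,y_0), \max(y,y_0)]) \le \sqrt{2b}\,|y - y_0|^{1/2}$ (by subadditivity of $t \mapsto t^{1/2}$, as in Lemma~\ref{mongap}/Lemma~\ref{aditgap}), so that $(g(y) - g(y_0))^2 \le 2b\,|y-y_0| \le 2b \cdot \tfrac{1}{2b}\,\bigl((\,\cdots)\,\bigr)$ — the factor $\sqrt{2b}$ in the definition of $g$ is calibrated precisely to beat the $y^2$ term. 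I expect the clean statement to be: for the chosen $c = c(y_0)$, $(c - g(y_0))^2 + y_0^2 \le (c-g(y))^2 + y^2$ for all $y \in K$, with equality only at $y = y_0$ (and using $a = \min K > 0$ to control things near $y = 0$, which is not in $K$).

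Having produced, for each $y_0 \in K$, the axis point $z = z(y_0)$ with exactly the two nearest points $(g(y_0), \pm y_0)$, I would then argue $z \in \Crit(d_F)$ and $d_F(z) \notin $ (the good set), so $S_{d_F(z)}(F)$ is not a Lipschitz manifold near $z$: the two nearest points being symmetric about the $x$-axis and distinct, the one-sided directional derivatives of $d_F$ at $z$ in the $\pm x$ directions are both $< 1$ in absolute value while in $\pm y$ directions $d_F$ decreases (moving toward a nearest point), so by Lemma~\ref{charcrit} no direction of uniform strict decrease exists near $z$ — hence $0 \in \partial d_F(z)$, i.e. $z \in \Crit(d_F)$; alternatively and more directly, one checks that $S_r(F)$ near $z$ is (to leading order) the boundary of the intersection of two balls meeting transversally at $z$, which has a corner there, so it is not a Lipschitz graph. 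Finally I must identify $d_F(z(y_0))$ for $y_0$ ranging over $K$: the map $y_0 \mapsto d_F(z(y_0))^2 = (c(y_0) - g(y_0))^2 + y_0^2$ should be arranged (by the explicit choice of $c$) to be a strictly monotone continuous function of $y_0$ with some simple closed form — ideally $d_F(z(y_0)) = y_0$ itself, or $= g(y_0) + y_0$ or similar — so that as $y_0$ runs over $K$ the critical values $d_F(z(y_0))$ run over a set containing $K$ (after possibly an affine change; in the cleanest version one rigs the construction so the critical values are literally $K$). Then $K \subset L_F \subset \cv(d_F)$ by \eqref{tlcv}.

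The main obstacle is the middle step: verifying globally (over all $y \in K$, not just locally near $y_0$) that $(g(y_0), \pm y_0)$ are the \emph{only} nearest points of $F$ to $z(y_0)$, and that this nearest point is attained \emph{exactly} at $d_F(z(y_0))$ landing on the prescribed value — this is where the precise form $g(y) = \sqrt{2b}\, G_{1/2}(K \cap [a,y])$ and the constant $\sqrt{2b}$ are forced, and where the Besicovitch–Taylor subadditivity bound $G_{1/2}(K\cap[u,v]) \le (v-u)^{1/2}$ (valid since $K$ is Lebesgue null, Lemma~\ref{mongap}) must be combined with the choice of $c(y_0)$ to produce a strict inequality uniformly away from $y_0$; handling the behaviour near the endpoints $a$ and $b$ of $K$ and near $y = 0 \notin K$ will require the separate minor estimates one expects in a Ferry-type construction.
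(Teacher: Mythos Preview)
Your proposal has the central inequality reversed, and this inverts the whole geometric picture. You claim $G_{1/2}(K\cap[u,v]) \le (v-u)^{1/2}$, but subadditivity of $\sqrt{\cdot}$ gives the opposite: since $K$ is Lebesgue null the gaps of $K\cap[u,v]$ fill $[u,v]$, so $\sum_I \sqrt{|I|}\ge \sqrt{\sum_I|I|}=\sqrt{v-u}$ (equivalently, apply Lemma~\ref{mongap} with $K_1=\{u,v\}\subset K_2=K\cap[u,v]$). Hence $(g(v)-g(u))^2\ge 2b(v-u)$, not $\le$; the function $g$ grows \emph{fast}, not slowly. This is exactly what one needs: the horizontal spread of $F$ dominates the vertical, so for the axis point one simply takes $c=g(v)$ (no formal differentiation or clever $c(y_0)$ is needed), and then $(g(v)-g(u))^2+u^2 > v^2$ for $u<v$ shows directly that $\pi_F((g(v),0))=\{(g(v),\pm v)\}$ and $d_F((g(v),0))=v$. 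Your ``$g$ grows slowly'' intuition and the resulting search for a nontrivial $c(y_0)$ would not lead to a proof.

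Your argument for $v\in L_F$ is also flawed. You say $S_v(F)$ is locally the boundary of two balls ``meeting transversally,'' giving a corner; but a corner (two arcs meeting at a nonzero angle) \emph{is} bilipschitz to a line, so that would not put $v$ in $L_F$. In fact the two nearest points $(g(v),\pm v)$ lie on the vertical through $z=(g(v),0)$, so the circles $\partial B((g(v),\pm v),v)$ are \emph{tangent} at $z$, not transversal. Exploiting this tangency requires the sharper argument the paper gives: for small $h>0$ one locates points $p^\pm(h)\in S_v(F)$ with first coordinate $g(v)+h$ and second coordinate $t^\pm(h)=O(h^2)$ on either side of the axis, while no point of $S_v(F)$ lies on the half-axis $\{(t,0):t>g(v)\}$; if $S_v(F)$ were a Lipschitz $1$-manifold near $z$, a bilipschitz chart would force the arc from $p^+(h)$ to $p^-(h)$ (of diameter $O(h^2)$) to cross the axis at some $(y(h),0)$ with $y(h)\le g(v)$, hence at distance $\ge h$ from $p^+(h)$ --- contradicting $t(h)/h\to 0$.
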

\begin{proof}
 It is easy to check that $g$ is strictly increasing and continuous
 on $K$ and therefore $F$ is compact. Now consider arbitrary $u,v \in K$, $u<v$.
 Denoting  $\cal G:= \cal G_{K\cap [u,v]}$, we have
\begin{equation}\label{sumy}
 v-u = \sum_{I \in \cal G} |I| \leq  \left(\sum_{I \in \cal G} \sqrt{|I|} \right)^2 = (2b)^{-1}
 (g(v)-g(u))^2.
\end{equation}
  We will show that the metric projection $\pi_F$ to $F$ satisfies
\begin{equation}\label{proj}
 \text{ $\pi_F ((g(v),0)) = \{(g(v),v), (g(v), -v)\}$ for each $v \in K$.}
\end{equation}
To this end, consider $u \in K$, $u \neq v$. If  $u< v$, using  $\eqref{sumy}$ we obtain
$$ v^2-u^2 = (v-u)(v+u) < 2b (v-u)  \leq  (g(v)-g(u))^2 $$
 and consequently
\begin{equation}\label{vzduv}
\|(g(v),0) - (g(u),u)\| = \sqrt{(g(v)-g(u))^2 +u^2}  >v =  \|(g(v),0) - (g(v),v)\|.
\end{equation}
 Since \eqref{vzduv} is obvious for $u>v$, \eqref{proj} easily follows.
 
 Further note that
 \begin{equation}\label{nlnv}
 \text{if $v\in K$ and $z> g(v)$, then $\dist((z,0), F)>v$.}
 \end{equation}
 To this end, consider a point $W\in F$; we can suppose without loss of generality that $W= (g(w),w)$ for some $w \in K$. Using the monotonicity of $g$, it is easy to see that 
  $\|(z,0)- (g(w),w)\| >v$ if $w \geq v$. If $w<v$, observe that clearly
	$\|(z,0)- (g(w),w)\|> \|(g(v),0)- (g(w),w)\|$ and $\|(g(v),0)- (g(w),w)\|>v$ holds by \eqref{vzduv}. Using the compactness of $F$, we obtain \eqref{nlnv}.

In order to prove that $K \subset L_F$, suppose to the contrary that there exists a $v\in K \setminus L_F$.
Now set
$$   t(h):= v - \sqrt {v^2-h^2},\ \ \ 0<h<v, $$
 and observe that $\|(g(v),\pm v) - (g(v)+h, \pm t(h))\|= v$. Using also  \eqref{nlnv}, we obtain that
$$ \dist((g(v)+h, \pm t(h)), F) \leq v\ \ \ \text{and}\ \ \ \dist((g(v)+h, 0)), F) >v,$$
 and consequently there exist (for  $h \in (0,v)$) numbers $t^+(h) \in (0, t(h)]$, $t^-(h)\in [-t(h),0)$ such that
$$    p^{\pm}(h):= (g(v)+h, t^{\pm}(h)) \in S_v(F).$$
Since $(g(v),0) \in S_v(F)$ and $v \notin L_F$, there exists an open neigbourhood $U$
 of $(g(v),0)$, an open interval $I \subset \R$, $K>0$ and a bijection $\vf:I \to U\cap S_v(F)$
  such that both $\vf$ and $\vf^{-1}$ are $K$-Lipschitz. Choose $h_0 >0$
	such that  $p^{\pm}(h)\in U$ for each $0<h<h_0$ and define
	$$    \tau^{\pm}(h) := \vf^{-1}(p^{\pm}(h)),\ \ \  0<h<h_0.$$
	For each   $0<h<h_0$, we know that
	$$  A(h):= \vf(\conv\{\tau^+(h), \tau^-(h)\}) \subset S_v(F)$$  
	 is connected. So, since  $p^{\pm}(h)\in A(h)$, using \eqref{nlnv}
 we obtain that there exists $y(h) \leq g(v)$ such that $(y(h),0) \in A(h)$.
Since clearly  $|\tau^+(h) - \tau^-(h)| \leq  2Kt(h)$, we obtain
$$ h \leq \|(y(h),0)- p^+(h)\|\leq  K  |\tau^+(h) - \tau^-(h)|      \leq 2 K^2 t(h),\ \ \ 0<h<h_0,$$
 which contradicts  the fact that  $\lim_{h\to 0} t(h)/h = 0$.
\end{proof}

\begin{remark}\label{Fmala}
The set $F$ from Proposition \ref{konstr} is clearly contained in the ball 
$\overline B(0,b+ \sqrt{2b}\cdot  G_{1/2} (K))$. 
\end{remark}

\begin{proof}[Proof of Theorem~\ref{odrn}]
 By \eqref{tlcv}, (i)$\implies$(ii). If (ii) holds, then $\overline{A}\subset \cv(d_F)\cap [\ep,\infty)$, since $\cv(d_F)\cap [\ep,\infty)$ is closed. So $\overline{A}$ is a $\BT$ set
 by Proposition~\ref{zlfud} and Lemma~\ref{mongap}.
 The implication  (iii)$\implies$(i) follows by Proposition~\ref{konstr} (applied
 to $K:= \overline{A}$).
\end{proof}

\begin{remark}
Note that it is not difficult to construct a $\BT$ set of Hausdorff dimension $\frac 12$. Hence, Theorem~\ref{odrn} gives another argument for the result of Fu \cite{Fu}*{\S5.2} that the case $\dim_H(\cv(d_F))=\frac 12$ is possible in the planar case. For a stronger result, see Theorem~\ref{TF}. 
\end{remark}

To prove Theorem \ref{TF} concerning sets $T_F$, we will need the following supplement
 to Proposition \ref{konstr}.

\begin{lemma}  \label{konstr_TF}
Let $K,F$ be as in Proposition~\ref{konstr} and let $y\in K$ be such that for any $\ep>0$ there exist $x\in K\cap(y-\ep,y)$ and a gap $I\subset (x,y)$ of $K$ with $|I|\geq \frac{4y}{b}(y-x)$. Then $y\in T_F$.
\end{lemma}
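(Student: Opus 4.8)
The goal is to show that under the stated hypothesis, the distance sphere $S_v(F)$ fails to be a topological $1$-manifold at the point $(g(v),0)$ (where I write $v$ for $y$ to avoid clash with the coordinate notation). The strategy mirrors the argument in Proposition~\ref{konstr}, but now I exploit the presence of a ``large'' gap to pinch the sphere. The key geometric input from Proposition~\ref{konstr} that I would reuse without change is \eqref{nlnv}: for $w\in K$ and $z>g(w)$ one has $\dist((z,0),F)>w$; in particular $(g(v),0)\in S_v(F)$ and moving to the right off the $x$-axis at abscissa $g(v)+h$ one still stays (for small $h$) inside $S_v(F)$ at two points $p^{\pm}(h)=(g(v)+h,t^{\pm}(h))$ with $|t^{\pm}(h)|\le t(h):=v-\sqrt{v^2-h^2}=O(h^2)$, exactly as constructed there.

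\textbf{The new ingredient.} Now fix $\ep>0$ and take $x\in K\cap(v-\ep,v)$ together with a gap $I=(u_1,u_2)\subset(x,v)$ of $K$ with $|I|\ge \frac{4v}{b}(v-x)$. Since $g$ is constant on no gap and jumps by $\sqrt{2b}\,|I|^{1/2}$ across $I$, we have $g(v)-g(x)\ge g(u_2)-g(u_1)=\sqrt{2b}\,|I|^{1/2}$. I would estimate the portion of $S_v(F)$ lying over the ``abscissa window'' $(g(x),g(v))$: for any point $W=(g(w),\pm w)\in F$ with $w\le x$, and any query point $q=(s,\eta)$ with $g(x)<s<g(v)$, monotonicity of $g$ together with \eqref{vzduv}-type estimates forces $\|q-W\|$ to exceed $v$ unless $|\eta|$ is comparable to the square of the horizontal deficit $g(v)-s$ — so the only nearby parts of $F$ are $(g(v),\pm v)$ themselves. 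The upshot I aim for: over every abscissa $s\in(g(x),g(v))$ the set $S_v(F)$ consists of points with $|\eta|=O\big((g(v)-s)^{?}\big)$, i.e.\ $S_v(F)$ is squeezed into a thin ``horn'' opening to the left at $(g(v),0)$, while simultaneously \eqref{nlnv} keeps it pinched against $\{\eta=0\}$ from the right. Concretely I expect to show: there are points of $S_v(F)$ on both sides of the $x$-axis arbitrarily close to $(g(v),0)$ (the $p^{\pm}(h)$ as $h\to 0^+$), but the set $S_v(F)\cap\{s\le g(v)\}\cap U$ lies in $\{\eta=0\}$; this is the same topological defect exploited in Proposition~\ref{konstr}.

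\textbf{Deriving the contradiction.} Suppose $v\notin T_F$, so there is a neighbourhood $U$ of $(g(v),0)$, an interval $J$ and a \emph{homeomorphism} $\vf:J\to U\cap S_v(F)$. Following the proof of Proposition~\ref{konstr} verbatim: set $\tau^{\pm}(h)=\vf^{-1}(p^{\pm}(h))$, let $A(h)=\vf(\conv\{\tau^+(h),\tau^-(h)\})$, which is a connected subset of $S_v(F)$ joining $p^+(h)$ and $p^-(h)$; by \eqref{nlnv} it must meet $\{(y,0):y\le g(v)\}$, say at $(y(h),0)$. Crucially — and here the quantitative hypothesis on the gap enters — I need a \emph{lower} bound on the horizontal span $g(v)-y(h)$ forcing $y(h)\le g(x)$, i.e.\ the connecting arc is forced to reach back past the large jump of $g$; this is where the inequality $|I|\ge\frac{4v}{b}(v-x)$ gets calibrated (the factor $4$ and the $v/b$ are exactly what makes the horn argument close, via the estimate $v^2-x^2=(v-x)(v+x)<2v(v-x)\le \frac{b}{2}|I|\le\frac14(g(v)-g(x))^2$ or a variant thereof). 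Since $\vf^{-1}$ is continuous, $\operatorname{diam}A(h)$ is small, so $|\tau^+(h)-\tau^-(h)|$ is small, while $\vf$ being continuous means $A(h)$ has small diameter; but $A(h)$ contains both $p^+(h)$ (at abscissa $g(v)+h$) and $(y(h),0)$ (at abscissa $\le g(x)<g(v)-\sqrt{2b}\,|I|^{1/2}$), so $\operatorname{diam}A(h)\ge h+\sqrt{2b}\,|I|^{1/2}$, which is bounded away from $0$. Letting $h\to 0^+$, continuity of $\vf$ and $\vf^{-1}$ at $(g(v),0)=\vf(\lim\tau^{\pm}(h))$ forces $\operatorname{diam}A(h)\to 0$ — the contradiction.

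\textbf{Main obstacle.} The routine parts are the $O(h^2)$ estimate for $t(h)$ and reusing \eqref{nlnv}; the delicate point is the \emph{horn estimate}: proving that the connected arc $A(h)$ inside $S_v(F)$ joining $p^+(h)$ and $p^-(h)$ is genuinely forced to travel horizontally all the way back past abscissa $g(x)$ (rather than, say, looping up in the $\eta$-direction while staying near abscissa $g(v)$). Making that rigorous requires pinning down the geometry of $S_v(F)$ over the whole window $(g(x),g(v))$ and checking it is the thin horn described above, using the monotonicity of $g$ on $K$ and the quantitative gap condition — that is the step I would budget the most care for, and the precise constant in the hypothesis ($\frac{4y}{b}$) is surely chosen to make exactly this estimate work.
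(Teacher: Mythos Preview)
Your plan has a genuine gap, and it is precisely the step you flag as the ``main obstacle''. The claim that the connecting arc $A(h)\subset S_v(F)$ must reach back to abscissa $\le g(x)$ is unfounded, and in fact false. Nothing prevents $A(h)$ from crossing the $x$-axis at (or arbitrarily near) $(g(v),0)$ itself --- that point lies on $S_v(F)$, and by continuity of $\vf$ and $\vf^{-1}$ the arc $A(h)$ shrinks to $(g(v),0)$ as $h\to 0^+$, so for small $h$ it \emph{cannot} reach the fixed abscissa $g(x)<g(v)$. If instead you let $\ep$ (and hence $x$ and the gap $I$) vary with $h$, then $g(v)-g(x)\to 0$ as well and the proposed lower bound on $\diam A(h)$ evaporates. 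In short, the quantitative Lipschitz inequality $h\le 2K^2 t(h)$ from Proposition~\ref{konstr} has no analogue when $\vf$ is merely a homeomorphism; your scheme tries to replace it with a geometric barrier that is not there. Relatedly, your expectation that $S_v(F)\cap\{s\le g(v)\}\cap U\subset\{\eta=0\}$ is incorrect: the paper's argument in fact produces pieces of $S_v(F)$ off the axis and to the left of $g(v)$.

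The paper takes a different route. The gap condition is used not to constrain an arc, but to show directly that at the midpoint abscissa $z_n:=\tfrac12(g(u_n)+g(v_n))$ of the $n$th gap one has $d_F((z_n,0))\ge y$ (this is the computation where the constant $\tfrac{4y}{b}$ is calibrated, via $u_n^2+\tfrac14(g(v_n)-g(u_n))^2\ge y^2$). Since $d_F<y$ on the vertical segments $\{g(u_n)\}\times(-y,y)$ and $\{g(v_n)\}\times(-y,y)$ and also above/below the horn curve $|s|=h(g(y)-t)$, the intermediate value theorem traps a nonempty component $C_n$ of $S_y(F)$ inside the thin region $\{g(u_n)<t<g(v_n),\ |s|\le h(g(y)-t)\}$. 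These regions converge to $(g(y),0)$ as $n\to\infty$ but never contain it, so $(g(y),0)$ is an accumulation point of components $C_n$ not containing it --- which is impossible in a topological $1$-manifold. The topological obstruction is thus ``accumulating small components'', not the arc-length contradiction you attempt.
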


\begin{proof}
Recall that $(g(y),0)\in S_y(F)$. 
Given $n\in\N$, there exist by assumption a point $x_n\in K\cap(y-\frac 1n,y)$ and a gap $I=(u_n,v_n)\subset (x_n,y)$ of $K$ with $v_n-u_n\geq\frac{4y}b(y-x_n)$. We shall show that $d_F((z_n,0))\geq y$, where $z_n:=\frac{g(u_n)+g(v_n)}2$. Using \eqref{vzduv} we find that for any $w\in K$ such that $w\leq u_n$,
$$\|(z_n,0)-(g(w),w)\|^2\geq w^2+(g(u_n)-g(w))^2+(z_n-g(u_n))^2>u_n^2+(z_n-g(u_n))^2,$$
hence, using \eqref{sumy}, we obtain
\begin{eqnarray*}
d_F((z_n,0))^2&\geq&u_n^2+\frac{(g(v_n)-g(u_n))^2}4\geq u_n^2+\frac{2b(v_n-u_n)}{4}\\
&\geq&x_n^2+\frac b2 \frac{4y}{b}(y-x_n)\geq x_n^2+(y+x_n)(y-x_n)=y^2.
\end{eqnarray*}
Thus we have $d_F((z_n,0))\geq y$. On the other hand,
$d_F(g(u_n),s)<y$ and $d_F(g(v_n),s)<y$ whenever $|s|<y$.
Observe also that if $g(y)-y<t<g(y)$ and $h(g(y)-t)<|s|<y$, where 
$$h(q):=y-\sqrt{y^2-q^2},\quad |q|<y,$$
then $d_F((t,s))\leq\min\{\|(t,s)-(g(y),-y)\|,\|(t,s)-(g(y),y)\|\}<y$.
Consequently, for all sufficiently large $n$, there must be a (nonempty) component $C_n$ of $S_y(F)$ contained in 
$$\{(t,s):\, g(u_n)<t<g(v_n),\, -h(g(y)-t)\leq s\leq h(g(y)-t)\},$$
which therefore satisfies $(g(y),0)\not\in C_n$, but 
$\dist((g(y),0),C_n)\to 0$, $n\to\infty$. This excludes the possibility of $S_y(F)$ being a topological $1$-manifold.
\end{proof}

\begin{theorem}  \label{TF}
There exists a compact set $F\subset\R^2$ with $\dim_H(T_F\cap[1,2])=\frac 12$.
\end{theorem}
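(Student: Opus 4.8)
The plan is to build $F$ via Proposition~\ref{konstr} from a suitable $BT_{1/2}$-set $K\subset[1,2]$, chosen so that a large (Hausdorff-dimension-$\tfrac12$) subset of points $y\in K$ satisfies the hypothesis of Lemma~\ref{konstr_TF}. Once that is done, Lemma~\ref{konstr_TF} places all those $y$ in $T_F$, and since Theorem~\ref{odrn}(iii) (equivalently Proposition~\ref{zlfud}) forces $T_F\cap[1,2]\subset\cv(d_F)\cap[1,\infty)$ to have upper Minkowski dimension $\le\tfrac12$, hence Hausdorff dimension $\le\tfrac12$, we get equality. So the whole theorem reduces to the following construction problem in $\R$: produce a compact Lebesgue-null $K\subset[1,2]$ with $G_{1/2}(K)<\infty$ such that the set
$$
E:=\Bigl\{y\in K:\ \forall\ep>0\ \exists\, x\in K\cap(y-\ep,y)\ \exists\ \text{gap}\ I\subset(x,y)\ \text{of}\ K\ \text{with}\ |I|\ge\tfrac{4y}{b}(y-x)\Bigr\}
$$
has $\dim_H E=\tfrac12$; here $b=\max K\le2$, so the constant $\tfrac{4y}{b}$ is bounded by $8$, i.e.\ it suffices that each $y\in E$ be approached from the left by points $x$ with a gap of length comparable to (a fixed multiple of) $y-x$ sitting between $x$ and $y$.

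The natural candidate is a Cantor-type set. I would take a sequence of rapidly decreasing ratios and, at stage $n$, replace each surviving interval of length $\ell_n$ by two subintervals of length $\ell_{n+1}$ sitting at its two ends, leaving a central gap of length $\ell_n-2\ell_{n+1}$; then every point $y$ of the limit Cantor set $K$ is, at each scale $n$, the endpoint of a stage-$n$ interval, and picking $x$ to be the near endpoint of the \emph{other} stage-$(n+1)$ subinterval inside the same stage-$n$ interval (or, when $y$ is the left endpoint, using the next finer level on the left), one has $y-x$ of order $\ell_n$ while the intervening central gap has length $\sim\ell_n$, giving $|I|\ge\tfrac12(y-x)$ say — more than the required fixed multiple, after a harmless rescaling adjustment of the base point or a small constant absorbed by choosing $\ell_{n+1}\le\ell_n/100$. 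The Hausdorff dimension of such a Cantor set is $\liminf_n \frac{\log 2^n}{-\log\ell_n}$, which I can tune to be exactly $\tfrac12$ by choosing $\ell_n$ so that $\ell_n\approx 4^{-n}$ up to subexponential corrections; simultaneously $G_{1/2}(K)=\sum_n 2^{n}(\ell_n-2\ell_{n+1})^{1/2}\asymp\sum_n 2^n\ell_n^{1/2}<\infty$ because $2^n\ell_n^{1/2}\approx 2^n\cdot 2^{-n}=1$ is borderline — so I must actually take $\ell_n$ slightly smaller than $4^{-n}$, e.g.\ $\ell_n=4^{-n}n^{-3}$, which keeps $\dim_H$ equal to $\tfrac12$ (the $n^{-3}$ is subexponential) while making $\sum_n 2^n\ell_n^{1/2}=\sum_n n^{-3/2}<\infty$, and Lebesgue-nullity is automatic since $2^n\ell_n\to0$.

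With $K$ in hand, set $b=\max K$ (which I can normalize to lie in $[1,2]$; note $\min K\ge1>0$ so Proposition~\ref{konstr} applies), let $g$ and $F$ be as in Proposition~\ref{konstr}, so $F$ is compact, $K\subset L_F\subset\cv(d_F)$, and $F\subset\overline B(0,b+\sqrt{2b}\,G_{1/2}(K))$ is a genuine planar compact set. Every $y\in K$ lies in $E$ by the scale-by-scale argument above (the verification is just bookkeeping: at scale $n$ exhibit the explicit $x$ and gap $I$ and check $|I|\ge\tfrac{4y}{b}(y-x)$ using $\ell_{n+1}\le c\,\ell_n$ with $c$ small and $y-x\le C\ell_n$, $|I|\ge\ell_n/2$), so Lemma~\ref{konstr_TF} gives $K\subset T_F$, whence $\dim_H(T_F\cap[1,2])\ge\dim_H K=\tfrac12$. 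Combined with the upper bound from Proposition~\ref{zlfud} and \eqref{nedi}, $\dim_H(T_F\cap[1,2])=\tfrac12$.

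The main obstacle is the quantitative balancing act in the choice of $(\ell_n)$: I need $\sum 2^n\ell_n^{1/2}$ to converge (for $G_{1/2}(K)<\infty$, which Proposition~\ref{konstr} requires) while $\liminf_n\frac{n\log 2}{-\log\ell_n}=\tfrac12$ (for the dimension to be exactly $\tfrac12$, not smaller), and at the same time the ``gap $\ge\tfrac{4y}{b}(y-x)$'' inequality of Lemma~\ref{konstr_TF} must hold at every point and every scale, which pins down how $\ell_{n+1}$ may shrink relative to $\ell_n$ (it must not shrink so fast that the near point $x$ I pick has $y-x$ much larger than the gap below it). The resolution is that these constraints are compatible: converging $\sum 2^n\ell_n^{1/2}$ forces $\ell_n$ to be $4^{-n}$ times something summable-after-square-root, which is still subexponential and hence does not lower the dimension, and the gap inequality only needs $\ell_{n+1}/\ell_n$ bounded away from $1/2$ from below and small enough from above, both of which are satisfied by $\ell_n=4^{-n}n^{-3}$ for large $n$ (handle small $n$ by truncation, or by rescaling the whole construction into $[1,2]$). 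A secondary, purely technical point is to make sure the left-endpoint points of $K$ (countably many) are also handled — either check the hypothesis of Lemma~\ref{konstr_TF} for them directly using the left neighborhood structure, or simply note that removing a countable set does not change the Hausdorff dimension, so it suffices that $E$ contain all points of $K$ that are not left endpoints of complementary gaps, still a dimension-$\tfrac12$ set.
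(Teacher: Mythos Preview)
Your overall plan---build $K$ as a Cantor-type $BT_{1/2}$ set, apply Proposition~\ref{konstr}, and invoke Lemma~\ref{konstr_TF} at a dimension-$\tfrac12$ set of points---is exactly the paper's strategy. But there is a genuine obstruction you have overlooked, and neither of your proposed fixes (``rescaling the base point'' or ``taking $\ell_{n+1}\le\ell_n/100$'') addresses it.

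The hypothesis of Lemma~\ref{konstr_TF} asks for a gap $I\subset(x,y)$ with $|I|\ge\tfrac{4y}{b}(y-x)$. Since $I\subset(x,y)$ forces $|I|\le y-x$, the condition is \emph{impossible} unless $\tfrac{4y}{b}\le 1$, i.e.\ $b\ge 4y$. You take $K\subset[1,2]$ and set $b=\max K\le 2$, so for every $y\in[1,2]$ one has $\tfrac{4y}{b}\ge 2>1$, and the lemma can never be applied. No choice of Cantor ratios repairs this; the bound $|I|\le y-x$ is absolute. (Your remark that the constant is ``bounded by $8$'' should have been a warning sign: you cannot hope for $|I|\ge 8(y-x)$.)

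The paper's remedy is simple but essential: it adjoins an isolated point far to the right, taking $K\subset[1,2]\cup\{16\}$, so that $b=16$ and $\tfrac{4y}{b}\le\tfrac12$ for all $y\in[1,2]$. Now one only needs $|I|\ge\tfrac12(y-x)$, which \emph{is} achievable. In fact your single Cantor set with $\ell_n=4^{-n}n^{-3}$ would then work: for any $x<y$ in it, letting $n$ be the first level at which they lie in different subintervals of the same level-$n$ interval $J_n$, the central gap $I$ of $J_n$ satisfies $|I|=\ell_n-2\ell_{n+1}>\tfrac12\ell_n\ge\tfrac12(y-x)$ since $\ell_{n+1}/\ell_n<\tfrac14$. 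So after adjoining $\{16\}$, your construction is a legitimate (and arguably tidier) alternative to the paper's union of self-similar Cantor sets $C(\tfrac14-\tfrac1{5n})$; both yield $G_{1/2}(K)<\infty$ and $\dim_H=\tfrac12$, and both verify the gap condition with constant $\tfrac12$. Your handling of the countably many non--left-accumulation points and the upper bound via Proposition~\ref{zlfud} and \eqref{nedi} is fine.
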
 

\begin{proof}
Given $\alpha\in(0,\frac 12)$, let $C(\alpha)\subset[0,1]$ be the Cantor set obtained by removing iteratively the middle open interval of proportional length $1-2\alpha$ from each component at each step (alternatively, $C(\alpha)$ can be described as the self-similar set determined by the two similarities $x\mapsto\alpha x$, $x\mapsto 1-\alpha+\alpha x$). We have $\dim_H(C(\alpha))=\frac{\ln 2}{-\ln\alpha}$ (see \cite{Ma}*{\S4.10} or \cite{Fa}*{Theorem~9.3}). A direct calculation yields that
$$G_{1/2}(C(\alpha))=\sum_{n=0}^\infty 2^n\sqrt{(1-2\alpha)\alpha^n}=\frac{\sqrt{1-2\alpha}}{1-2\sqrt{\alpha}}.$$
Set $K_n:=1+2^{-n}+2^{-n}C(\frac 14-\frac 1{5n})$, $n=1,2,\dots$, and
$$K:=\{1\}\cup\bigcup_{n=1}^\infty K_n\cup\{16\}.$$
Since $G_{1/2}(K_n)=2^{-n/2}G_{1/2}(C(\frac 14-\frac 1{5n}))$, it follows easily that
$$G_{1/2}(K)=\sum_{n=1}^\infty G_{1/2}(K_n)+\sqrt{14}<\infty.$$
Thus, Proposition~\ref{konstr} can be applied, and let $F\subset\R^2$ be the associated compact set. Each set $K_n$ has the property that for any two points $x,y\in K_n$, $x<y$, there exists a gap $I\subset(x,y)$ of $K_n$ with $I>\frac 12(y-x)$ (this follows from the construction of $C(\alpha)$ where the middle part of proportional length $1-2\alpha$, bigger than $\frac 12$ in our case, is removed at each step).
Thus, for any left accumulation point $y$ of $K$ and any $\ep>0$ there exists a point $x\in K\cap(y-\ep,y)$ and a gap $I\subset(x,y)$ of $K$ with $|I|\geq\frac 12(y-x)\geq\frac{4y}{b}(y-x)$ (note that $y\leq 2$ and $b=16$ in our case). Lemma~\ref{konstr_TF} thus yields that $T_F\cap[1,2]$ contains the set $K^*$ of all left accumulation points of $K$. Finally, note that
$$\dim_H K=\sup_n\dim_HK_n=\sup_n\dim_H C(\tfrac 14-\tfrac 1{5n})=\sup_n\frac{\ln 2}{-\ln(\frac 14-\frac 1{5n})}=\frac 12,$$
and, since $K\setminus K^*$ is countable, also $\dim_HK^*=\frac 12$, which completes the proof.
\end{proof}

Now it is easy to prove the following full ``characterization of smallness'' of sets $L_F$ and $\cv(d_F)$ with
 arbitrary closed $F \subset \R^2$. The subsequent case of {\it compact} sets $F$ (Proposition \ref{Pchardon} and Theorem \ref{chardon}) is much more
 sofisticated.

\begin{theorem}\label{euprouz}
 Let  $A \subset (0, \infty)$. Then the following properties are equivalent.
\begin{enumerate}
\item[(i)]\ $A\subset L_F$ for some closed $F \subset \R^2$.  
\item[(ii)]\ $A\subset \cv(d_F)$ for some closed $F \subset \R^2$.  
\item[(iii)]\ $A$ can be covered by countably many  $\BT$ sets.
\end{enumerate}
 In particular, for each closed $\emptyset \neq F \subset \R^2$, the set $\cv(d_F)$ (and so also $L_F$ and $T_F$) is a countable union of sets with zero $1/2$-dimensional  Minkowski content  and, consequently,
 $\dim_P(\cv(d_F)) \leq 1/2$.
\end{theorem}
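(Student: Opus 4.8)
The plan is to establish the three equivalences and then harvest the ``in particular'' consequences from results already available in the excerpt.

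\textbf{The easy implications.} First, (i)$\implies$(ii) is immediate from \eqref{tlcv}, which gives $L_F\subset\cv(d_F)$ for any closed $F$. Next, for (ii)$\implies$(iii): suppose $A\subset\cv(d_F)$ for a closed $F\subset\R^2$. The idea is to localize using Lemma~\ref{reduk}. Exhaust $\R^2$ by countably many balls $B(x_j,r_j)$, $x_j\in\Q^2$, $r_j\in\Q$, and for each $j$ with $B(x_j,r_j)\cap F\neq\emptyset$ put $F_j:=F\cap\overline B(x_j,3r_j)$, which is compact. By Lemma~\ref{reduk}, $d_F=d_{F_j}$ on $B(x_j,r_j)$, hence $\Crit(d_F)\cap B(x_j,r_j)=\Crit(d_{F_j})\cap B(x_j,r_j)$ and the corresponding critical values agree. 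Every critical point of $d_F$ lies in some such $B(x_j,r_j)$ (take $a\in\Crit(d_F)$; if $a\in F$ then $d_F(a)=0\notin(0,\infty)$, so only points $a\notin F$ with $d_F(a)>0$ matter, and near such a point there is a nearby point of $F$, so a small rational ball around $a$ meets $F$). Therefore
\[
\cv(d_F)\cap(0,\infty)\subset\bigcup_j \bigl(\cv(d_{F_j})\cap(\tfrac1{k},\infty)\bigr),
\]
where for each $j$ one may further intersect with $[\ep,\infty)$ for all rational $\ep>0$. By Proposition~\ref{zlfud}, each $\cv(d_{F_j})\cap[\ep,\infty)$ is a $\BT$ set, so $A\subset\cv(d_F)\cap(0,\infty)$ is covered by countably many $\BT$ sets.

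\textbf{The implication (iii)$\implies$(i).} Suppose $A\subset\bigcup_{n=1}^\infty K_n$ with each $K_n$ a $\BT$ set; by intersecting with $\overline A$ and discarding empty pieces we may assume each $K_n$ is a nonempty $\BT$ set. The plan is to realize each $K_n$ separately by a bounded compact planar set via Proposition~\ref{konstr}, then translate these sets far apart in the plane so that a single closed $F$ (the union) has each $K_n\subset L_F$. Concretely, for each $n$ let $0<a_n\le b_n$ bound $K_n$; Proposition~\ref{konstr} produces a compact $F_n\subset\R^2$ with $K_n\subset L_{F_n}$, and by Remark~\ref{Fmala}, $F_n\subset\overline B(0,R_n)$ with $R_n:=b_n+\sqrt{2b_n}\,G_{1/2}(K_n)$. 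Now choose translation vectors $c_n\in\R^2$ with $\|c_n\|\to\infty$ fast enough that the translated balls $\overline B(c_n,R_n)$ are pairwise disjoint and, moreover, for each $n$ every point of $\overline B(c_n,3R_n)$ is at distance $>\,b_n$ (indeed $>\diam F_m$ for all $m\ne n$) from every $F_m$, $m\ne n$; this is possible since each $\diam F_m\le 2R_m$ is finite and we simply push $c_n$ off to infinity. Set $F:=\overline{\bigcup_n(F_n+c_n)}$. The separation guarantees $F$ is closed (a sequence from $F$ eventually stays in one piece or escapes to infinity, and escaping is impossible since... — here one should be slightly careful: to keep $F$ closed and bounded-component-wise, one can additionally require $\|c_n\|\to\infty$ and note the pieces $F_n+c_n$ have uniformly... no — instead simply take $F:=\bigcup_n(F_n+c_n)$ and verify directly that it is closed because any convergent sequence of its points, having a limit in $\R^2$, is eventually within a bounded region which meets only finitely many of the disjoint pieces, each closed). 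For $r\in K_n$ and a point $p\in S_r(F_n)+c_n$, the separation ensures $d_F=d_{F_n+c_n}$ on a neighborhood of $p$, so $S_r(F)$ coincides locally with $S_r(F_n+c_n)$, which fails to be a Lipschitz manifold at $p$; hence $r\in L_F$. Thus $\bigcup_n K_n\subset L_F$, and in particular $A\subset L_F$.

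\textbf{The ``in particular'' clause.} Let $\emptyset\neq F\subset\R^2$ be closed. Applying (ii)$\implies$(iii) with $A=\cv(d_F)\cap(0,\infty)$ shows this set is a countable union $\bigcup_n K_n$ of $\BT$ sets; adding the singleton $\{0\}$ (which is trivially a $\BT$ set) covers all of $\cv(d_F)$. By \eqref{nulmin} each $K_n$ has zero $1/2$-dimensional Minkowski content, so $\cv(d_F)$ is a countable union of such sets, and the same holds for $L_F$ and $T_F$ by \eqref{tlcv}. Finally, by \eqref{nedi} and the countable stability of packing dimension recorded after \eqref{dimp}, $\dim_P(\cv(d_F))=\sup_n\dim_P K_n\le\sup_n\overline{\dim}_M K_n\le\tfrac12$ (using $\overline{\dim}_M K_n\le\tfrac12$, which follows from ${\cal M}^{1/2}(K_n)=0$, i.e.\ \eqref{hawk}).

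\textbf{Main obstacle.} The routine implications (i)$\implies$(ii) and the localization in (ii)$\implies$(iii) are straightforward given Lemma~\ref{reduk} and Proposition~\ref{zlfud}. The only genuinely delicate point is the construction in (iii)$\implies$(i): one must arrange the translated copies $F_n+c_n$ so that (a) the union $F$ is \emph{closed}, and (b) near each distance sphere $S_r(F)$ with $r\in K_n$ the set $F$ looks locally exactly like the single piece $F_n+c_n$, so that the non-manifold point survives. Both are handled by a sufficiently fast escape $\|c_n\|\to\infty$ with quantitative separation $\dist(F_n+c_n,F_m+c_m)$ much larger than all the relevant radii $R_m$, but writing this bookkeeping cleanly (and double-checking that a bounded convergent sequence of points of $F$ meets only finitely many pieces, each of which is closed) is where the care lies.
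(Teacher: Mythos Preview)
Your proof is essentially correct and follows the same route as the paper: \eqref{tlcv} for (i)$\Rightarrow$(ii); localization via Lemma~\ref{reduk} to reduce to compact $F$ and then Proposition~\ref{zlfud} for (ii)$\Rightarrow$(iii); and Proposition~\ref{konstr} plus well-separated translates for (iii)$\Rightarrow$(i). The paper's bookkeeping for the last step is slightly cleaner: it imposes the explicit condition $\dist(c_k+F_k,c_l+F_l)\geq 4\max(\diam F_k,\diam F_l)+1$, which simultaneously guarantees that $F=\bigcup_n(c_n+F_n)$ is closed and that $S_r(F)$ agrees with $S_r(c_n+F_n)$ on the relevant neighbourhood (using $r\leq\diam F_n$ from \eqref{cvdiam}).

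One small gap to patch in your (iii)$\Rightarrow$(i): after intersecting the $K_n$ with $\overline{A}$ you assert ``let $0<a_n\le b_n$ bound $K_n$'', but $\overline{A}$ may contain $0$, so $\min K_n=0$ is possible and Proposition~\ref{konstr} would not apply. The fix (as in the paper) is to further intersect each $K_n$ with the intervals $[1/m,\infty)$, $m\in\N$; since $A\subset(0,\infty)=\bigcup_m[1/m,\infty)$ this still covers $A$, and each $K_n\cap[1/m,\infty)$ is a $\BT$ set by Corollary~\ref{adbt} with strictly positive minimum.
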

\begin{proof}
 By \eqref{tlcv}, (i)$\implies$(ii).
To prove  (ii)$\implies$(iii), first observe that, for any compact $\emptyset \neq F \subset \R^2$,
$$\cv(d_F)=\{0\}\cup\bigcup_{m=1}^\infty (\cv(d_F)\cap[\frac 1m,m]),$$
 and consequently
\begin{equation}\label{cvspocpokr}
\text{$\cv(d_F)$ can be covered by countably many  $\BT$ sets}
\end{equation}
by Theorem \ref{odrn}. For a closed (noncompact) set $\emptyset \neq F$ we get using Lemma \ref{reduk} that
$$ \cv(d_F) \subset  \bigcup_{n=1}^{\infty} \cv(d_{F\cap \overline{B}(0,n)}).$$
 Consequently we obtain that \eqref{cvspocpokr} holds for all closed $\emptyset \neq F\subset \R^2$, and so (ii)$\implies$(iii) follows.

If (iii) holds, then  $A \subset \bigcup_{n=1}^{\infty} K_n$, where each $\emptyset \neq K_n$
 is a $\BT$ set. Since $A\subset\bigcup_m[\frac 1m,\infty)$, we can suppose that $K_n\subset (0,\infty)$, $n\in\N$.  Proposition~\ref{konstr}  implies that there exist a compact sets $\emptyset \neq F_n \subset \R^2$ 
	   such that  $K_{n} \subset L_{F_{n}}$. It is easy to construct inductively points $c_n \in \R^2$,
		 $n=1,2,\dots$, such that
		\begin{equation}\label{diskre}
		\dist(c_k + F_k, c_l + F_l) \geq 4 \max(\diam F_k, \diam F_l) +1,\ \ \ \ k,l \in \N,\ k \neq l.
		\end{equation}
		Then  $F:= \bigcup_{n=1}^{\infty} (c_n + F_n)$ is clearly closed. To prove (i), it is sufficient
		 to show that $A \subset L_F$. To this end, consider an arbitrary $r \in A$  and choose
		 $n \in \N$ with $r \in K_n$. Then $S_r(c_n +F_n)\neq \emptyset$ is not a Lipschitz manifold.
		 Since $r \leq \diam F_n$ by \eqref{cvdiam} and \eqref{tlcv}, we obtain by \eqref{diskre}
		that  $S_r(F) \cap ((c_n +F_n) + B(0, 2 \diam F_n))  = S_r(c_n+F_n)$, which implies $r \in L_F$. So (i), (ii) and (iii) are equivalent.
		
The rest follows by \eqref{nulmin} and \eqref{dimp}.		
		\end{proof}

\begin{lemma}\label{rozsek}
Let  $\delta>0$ and  $K \subset [\delta/2,\delta]$ be a  $\BT$ set.
  Then  there exist $p\in \N$ and compact sets $K_1,\dots, K_p$ such that 
		$$  K = \bigcup_{i=1}^{p} K_i,$$ 
		\begin{equation}\label{vlroz}
	G_{1/2} (K_i) \leq 2 \sqrt{\delta},\ i=1,\dots,p, \ \ \text{and}\ \ \ p \leq G_{1/2}(K)/\sqrt{\delta} +1.
	\end{equation}
\end{lemma}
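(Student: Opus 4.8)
The plan is to split $K$ greedily from left to right into blocks whose gap sums are individually small. Enumerate the gaps of $K$ in $[\delta/2,\delta]$; since $K$ is a $\BT$ set, $G_{1/2}(K)=\sum_{I\in\GB}|I|^{1/2}<\infty$. I would build the $K_i$ by choosing an increasing finite sequence of ``cut points'' $\delta/2 =: c_0 < c_1 < \dots < c_p := \delta$ in $K$ (or at the endpoints) and setting $K_i := K \cap [c_{i-1},c_i]$. The cut points are chosen greedily: having fixed $c_{i-1}$, let $c_i$ be the largest point such that the portion of the gap sum strictly inside $(c_{i-1},c_i)$, namely $\sum_{I \subset (c_{i-1},c_i)} |I|^{1/2}$, does not exceed $\sqrt{\delta}$; equivalently, $c_i$ is chosen just before the accumulated gap sum would exceed $\sqrt\delta$. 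Because each individual gap has length at most $\delta$, hence $|I|^{1/2}\le\sqrt\delta$, adding the first gap that crosses the threshold contributes at most another $\sqrt\delta$, so the gap sum of $K_i = K\cap[c_{i-1},c_i]$ — which by Lemma~\ref{mongap} is controlled by the gaps it actually contains plus that one crossing gap — is at most $2\sqrt\delta$. (Here one should be slightly careful: the gaps of $K_i$ as a standalone compact set include the gaps of $K$ lying in $(c_{i-1},c_i)$ together with possibly one extra gap adjacent to an endpoint if $c_{i-1}$ or $c_i$ is an isolated-from-the-right/left point; this is exactly the ``$+\sqrt\delta$'' slack, and a clean way to record it is via Lemma~\ref{aditgap} or a direct subadditivity argument as in Lemma~\ref{mongap}.)

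The count $p$ is bounded by observing that each block $K_1,\dots,K_{p-1}$ (all but possibly the last) ``uses up'' gap sum at least $\sqrt\delta$ before its cut was forced, in the sense that the total gap sum of $K$ strictly between consecutive cut points, summed over $i=1,\dots,p-1$, is at least $(p-1)\sqrt\delta$. Since these interior portions are disjoint subcollections of $\GB$, their total is at most $G_{1/2}(K)$. Hence $(p-1)\sqrt\delta \le G_{1/2}(K)$, giving $p \le G_{1/2}(K)/\sqrt\delta + 1$, which is \eqref{vlroz}. Each $K_i$ is compact as the intersection of $K$ with a closed interval, and $K=\bigcup_{i=1}^p K_i$ by construction since the intervals $[c_{i-1},c_i]$ cover $[\delta/2,\delta]\supset K$.

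I expect the only real subtlety — not an obstacle, but the point that needs care — is the bookkeeping of which gap gets charged to which block: a gap of $K$ straddling a cut point $c_i$ is not a gap of either $K_i$ or $K_{i+1}$ (it is replaced by shorter or equal gaps, or absorbed), so one must make sure the ``crossing gap'' is counted at most once and that the threshold argument genuinely yields the constant $2$ and not something larger. Using $|I|\le\delta$ throughout (valid since $K\subset[\delta/2,\delta]$) and the subadditivity of $t\mapsto t^{1/2}$ (as already exploited in Lemmas~\ref{mongap} and \ref{aditgap}) closes this gap cleanly, and Lebesgue nullity of each $K_i$ is inherited from $K$.
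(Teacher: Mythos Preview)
Your approach is essentially the same as the paper's: both cut $K$ greedily from left to right into pieces whose accumulated gap sum reaches roughly $\sqrt\delta$ before the next cut. The paper packages this via the increasing continuous function $g(t):=G_{1/2}(K\cap[\delta/2,t])$ on $K$, setting $t_k:=\max\{t\in K: g(t)\le k\sqrt\delta\}$ and $K_i:=K\cap[t_{i-1},t_i]$; then for $i>2$ one has $G_{1/2}(K_i)=g(t_i)-g(t_{i-1})\le i\sqrt\delta-(i-2)\sqrt\delta=2\sqrt\delta$, which sidesteps the ``crossing gap'' bookkeeping you correctly identify as the only delicate point.
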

\begin{proof}
We can clearly suppose that  $G_{1/2} (K) > 2 \sqrt{\delta}$.
Set $g(t):=  G_{1/2} (K\cap [\delta/2,t]), \ t \in K$. It is easy to see that $g$ is increasing
 and continuous on $K$, and consequently, for each $k \in \N$, there exists
$$  t_k:=  \max\{t \in [\delta/2,\delta]\cap K:\ g(t)\leq k \sqrt{\delta}\}.$$
Put $t_0:= \delta/2$ and
 $p:= \min\{ k:\ t_k = \max K\}$. Then clearly  $p \leq G_{1/2}(K)/\sqrt{\delta} +1$.
Set $K_i:= K \cap [t_{i-1}, t_i]$, $i=1,\dots,p$. 
Obviously, it is now sufficient to prove that $G_{1/2} (K_i) \leq 2 \sqrt{\delta},\ i=1,\dots,p$.
  Using Lemma \ref{mongap}, we easily see that this inequality holds if $i\leq 2$. If
 $i>2$, then $g(t_i) \leq i \sqrt{\delta}$ and it is easy to see that $g(t_{i-1}) > (i-2) \sqrt{\delta}$. Therefore   $G_{1/2} (K_i) = g(t_i) - g(t_{i-1}) \leq i \sqrt{\delta} - (i-2) \sqrt{\delta}
 = 2 \sqrt{\delta}$.
\end{proof}

\begin{proposition}\label{Pchardon}
Let $A\subset (0,\infty)$. Then the following conditions are equivalent.
\begin{enumerate}
\item[(i)] $A  \subset  L_F$ for some compact set $F \subset \R^2$.
\item[(ii)] $A  \subset \cv(d_F)$ for some compact set $F \subset \R^2$.
\item[(iii)]  $K: = \overline{A}$ is Lebesgue null and  there exists $D>0$ such that  $K \subset [0, D]$ and 
\begin{equation}\label{rada}
S:= \sum_{n=0}^{\infty}\ (\delta_n)^{3/2} \,  G_{1/2} (K \cap [\delta_{n+1}, \delta_n])  < \infty,
 \ \ \text{where}\ \ \delta_n:= D\,  2^{-n}.
\end{equation}
\end{enumerate}
\smallskip

Moreover, if $K= \cv(d_F)$ for some compact set $F \subset \R^2$ and $D\geq  \diam F>0$, then
  $K \subset [0, D]$ and  \eqref{rada} holds; more precisely, $S \leq 10^{10} D^2$.  
\end{proposition}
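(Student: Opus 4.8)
The plan is to establish the cycle (i)$\implies$(ii)$\implies$(iii)$\implies$(i), reading off the bound $S\le 10^{10}D^2$ directly from the proof of (ii)$\implies$(iii). The implication (i)$\implies$(ii) is immediate from \eqref{tlcv}. For the converse together with the ``moreover'' clause, note first that for a compact $F$ the set $\cv(d_F)=d_F(\Crit(d_F))$ is compact ($\Crit(d_F)$ is closed and bounded by \eqref{crcl} and \eqref{cvdiam}) and Lebesgue null (by Proposition~\ref{zlfud} and \eqref{nulmin}); hence if $A\subset\cv(d_F)$ then $K:=\overline A\subset\cv(d_F)\subset[0,\diam F]$, and Lemma~\ref{mongap} gives $G_{1/2}(K\cap[\delta_{n+1},\delta_n])\le G_{1/2}(\cv(d_F)\cap[\delta_{n+1},\delta_n])$ for every $n$. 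So it suffices to prove that, for $D=\diam F>0$ and $\delta_n=D2^{-n}$, one has $\sum_n(\delta_n)^{3/2}G_{1/2}(\cv(d_F)\cap[\delta_{n+1},\delta_n])\le 10^{10}D^2$ (for a prescribed $D\ge\diam F$ one then passes between the two dyadic grids using Lemma~\ref{aditgap}).

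To bound this series, fix $n$ and observe that $\cv(d_F)\cap[\delta_{n+1},\delta_n]=d_F(\Crit(d_F)\cap H_n)$, where $H_n=H(\delta_{n+1},\delta_n)$ is as in Lemma~\ref{peter} (any critical value in $[\delta_{n+1},\delta_n]$ is attained at a critical point lying in $H_n$). Lemma~\ref{peter} supplies a finite $P_n\subset H_n$, $p_n:=\card P_n$, with $\sum_n p_n(\delta_n)^2\le10^5D^2$ and $\{\overline B(x,\delta_n/8):x\in P_n\}$ covering $H_n$. For $x\in P_n$ we have $d_F(x)\ge\delta_{n+1}=\delta_n/2$, so $\delta_n/8\le d_F(x)/3$; hence Lemma~\ref{nakouli} (applied at $x$, with $\delta=d_F(x)\le\delta_n$) together with Lemma~\ref{mongap} shows that $d_F(\Crit(d_F)\cap H_n\cap\overline B(x,\delta_n/8))$ is a $\BT$ set with $G_{1/2}\le6\cdot10^4\sqrt{\delta_n}$. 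These $p_n$ sets lie in $[\delta_{n+1},\delta_n]$ and their union is $d_F(\Crit(d_F)\cap H_n)$, so Lemma~\ref{aditgap} yields
$$G_{1/2}\bigl(\cv(d_F)\cap[\delta_{n+1},\delta_n]\bigr)\le p_n\bigl(6\cdot10^4+1\bigr)\sqrt{\delta_n}\le 7\cdot10^4\,p_n\sqrt{\delta_n};$$
multiplying by $(\delta_n)^{3/2}$ and summing gives $S\le7\cdot10^4\sum_n p_n(\delta_n)^2\le7\cdot10^9D^2<10^{10}D^2$.

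For (iii)$\implies$(i) (the case $A=\emptyset$ being trivial) put $K=\overline A\subset[0,D]$ and $K_n:=K\cap[\delta_{n+1},\delta_n]$; finiteness of the $n$-th term of \eqref{rada} makes each nonempty $K_n$ a $\BT$ set. Split it via Lemma~\ref{rozsek} into $K_n=\bigcup_{j=1}^{q_n}K_{n,j}$ with $G_{1/2}(K_{n,j})\le2\sqrt{\delta_n}$ and $q_n\le G_{1/2}(K_n)/\sqrt{\delta_n}+1$, so that $\sum_n q_n(\delta_n)^2\le\sum_n\bigl((\delta_n)^{3/2}G_{1/2}(K_n)+(\delta_n)^2\bigr)<\infty$ by \eqref{rada}. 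Proposition~\ref{konstr} gives, for each nonempty $K_{n,j}$, a compact $F_{n,j}$ with $K_{n,j}\subset L_{F_{n,j}}$; by Remark~\ref{Fmala} (and $\max K_{n,j}\le\delta_n$, $G_{1/2}(K_{n,j})\le2\sqrt{\delta_n}$) the set $F_{n,j}$, and also the points $(g(y),0)$ ($y\in K_{n,j}$) whose neighbourhoods in $S_y(F_{n,j})$ are shown in the proof of Proposition~\ref{konstr} to be non-Lipschitz-manifolds, all lie in $\overline B(0,4\delta_n)$, with $(g(y),0)$ at distance $y\le\delta_n$ from $F_{n,j}$. Now choose translation vectors $c_{n,j}$ so that the balls $\overline B(c_{n,j},20\delta_n)$ are pairwise disjoint and all contained in one bounded square — possible by a routine layered packing, since $\sum_{n,j}(20\delta_n)^2=400\sum_n q_n(\delta_n)^2<\infty$ — and set $F:=\overline{\bigcup_{n,j}(c_{n,j}+F_{n,j})}$, a bounded closed, hence compact, set. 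Given $y\in K_{n,j}$ and $p:=c_{n,j}+(g(y),0)$, disjointness forces $F\cap\overline B(p,9\delta_n)=(c_{n,j}+F_{n,j})\cap\overline B(p,9\delta_n)$; applying Lemma~\ref{reduk} to $F$ and to $c_{n,j}+F_{n,j}$ at $p$ with radius $\tfrac{3y}{2}$ (note $3\cdot\tfrac{3y}{2}<9\delta_n$) gives $d_F=d_{c_{n,j}+F_{n,j}}$ on $B(p,\tfrac{3y}{2})$, so $S_y(F)$ coincides with $S_y(c_{n,j}+F_{n,j})$ near $p$. Since the proof of Proposition~\ref{konstr} in fact shows that $S_y(\,\cdot\,)$ fails to be a Lipschitz $1$-manifold in every neighbourhood of $(g(y),0)$, we conclude $y\in L_F$; as $A\subset K\cap(0,\infty)=\bigcup_{n,j}K_{n,j}$, this gives $A\subset L_F$.

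The crux is the construction in (iii)$\implies$(i): one must pack infinitely many Ferry-type pieces — possibly many at each scale, while their sizes shrink — into a single bounded set, keeping around each piece a ball (of radius comparable to its scale) into which no other piece reaches, so that the local non-manifold behaviour of $S_y(F_{n,j})$ survives in $S_y(F)$. Lemma~\ref{rozsek} is precisely what makes this feasible: by capping $G_{1/2}(K_{n,j})$ at $\sim\sqrt{\delta_n}$ it keeps the ``footprint'' of each piece at $\sim(\delta_n)^2$, so the total footprint $\sum_{n,j}(\delta_n)^2\asymp\sum_n(\delta_n)^{3/2}G_{1/2}(K_n)$ is finite exactly because of \eqref{rada}; applying Proposition~\ref{konstr} directly to $K_n$ (let alone to $K$) would in general give an unbounded set. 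The other point needing care is that the failure of manifoldness in Proposition~\ref{konstr} is genuinely a local property of $S_y$ at $(g(y),0)$, so that it transports through the local coincidence $d_F=d_{c_{n,j}+F_{n,j}}$ — this is extractable from that proof but should be stated explicitly.
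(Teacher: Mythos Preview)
Your proposal is correct and follows essentially the same route as the paper: (i)$\Rightarrow$(ii) by \eqref{tlcv}; (ii)$\Rightarrow$(iii) via Lemma~\ref{peter} to cover each annulus $H_n$ by $p_n$ balls, Lemma~\ref{nakouli} on each ball, and Lemma~\ref{aditgap} to sum; (iii)$\Rightarrow$(i) via Lemma~\ref{rozsek} to chop each $K_n$ into pieces with $G_{1/2}\le 2\sqrt{\delta_n}$, Proposition~\ref{konstr} and Remark~\ref{Fmala} to realize each piece, and a disjoint packing of translates. Your final local-coincidence argument (Lemma~\ref{reduk} plus the observation that the failure in Proposition~\ref{konstr} is local at $(g(y),0)$) is a more explicit version of the paper's one-line claim $S_r(F)\cap B(c_{n,p},7\delta_n)=S_r(F_{n,p}+c_{n,p})$.

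One small point: your reduction ``for a prescribed $D\ge\diam F$ one then passes between the two dyadic grids using Lemma~\ref{aditgap}'' is unnecessary and, as stated, does not obviously preserve the constant $10^{10}$. The cleaner observation (which is what the paper in effect uses) is that the proof of Lemma~\ref{peter} goes through verbatim for any $D\ge\diam F$: the only place $D=\diam F$ enters is the containment $\tilde H_n\subset B(x_0,3D)$, and for $x\in\tilde H_n$ one has $\|x-x_0\|\le d_F(x)+\diam F\le \tfrac{9}{8}D+D<3D$. With this, your bound $S\le 7\cdot 10^4\sum_n p_n(\delta_n)^2\le 7\cdot10^9 D^2$ holds directly for the given $D$.
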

\begin{proof}
By \eqref{tlcv}, we know that (i) implies (ii).

To prove that (ii) implies (iii), it is sufficient (due to Lemma \ref{mongap}) to show that
 (iii) holds if $K= \cv(d_F)$ for some compact $F \subset \R^2$. In this case it is well-known
 (see \cite{Fe} or \cite{Fu}) that $K= \cv(d_F)$ is Lebesgue null, and we also have proved it independently,
 see Remark \ref{inde}. Further
  $\cv(d_F) \subset [0,D]$ whenever $D \geq  \diam F$ by  \eqref{cvdiam}. Thus, to prove (iii) and the ``moreover part'' of the assertion, it is sufficient to show that $S \leq 10^{10} D^2$, where $S$ is the sum from \eqref{rada}.
  
 Let $\delta_n$, $H_n$, $P_n \subset H_n$ and $p_n$ 
 be as in Lemma \ref{peter}. Then \eqref{covhn} and \eqref{sumaan} hold.

 For  each  $x \in P_n$, we have  $\delta_n/2 \leq d_F(x) \leq \delta_n$ which gives
  $\overline{B}(x, \delta_n/ 8) \subset  \overline{B}(x, d_F(x)/ 3)$, and consequently
	Lemma \ref{nakouli} yields
	$$ G_{1/2} (d_F( \Crit(d_F) \cap \overline{B}(x, \delta_n/ 8)) \leq 6\cdot 10^4 \sqrt{d_F(x)}
	  \leq 6\cdot 10^4 \sqrt{\delta_n}.$$
		So, using \eqref{covhn}, $\lambda_2(\cv(d_F))=0$, Lemma \ref{aditgap} and  Lemma \ref{mongap}, we obtain
		\begin{multline*}
		G_{1/2} (\cv(d_F) \cap [\delta_{n+1}, \delta_n]) = G_{1/2} (d_F( \Crit(d_F) \cap H_n) \\
		\leq p_n 6 \cdot 10^4 \sqrt{\delta_n} + (p_n-1) \sqrt{\delta_n- \delta_{n+1}} 
		 \leq  10^5 p_n \sqrt{\delta_n}.
		\end{multline*}
	 Hence \eqref{sumaan} implies 
	 $$ S \leq 10^5 \sum_{n=0}^{\infty} p_n (\delta_n)^2 \leq 10^{10} D^2.$$
		
		Now suppose that  condition (iii) holds. Denote $K_n:= K \cap [\delta_{n+1}, \delta_n]$.
		By Lemma \ref{rozsek} we can write
		$$  K_n = \bigcup_{p=1}^{p_n} K_{n,p},$$
		 where all $K_{n,p}$ are compact 
		\begin{equation}\label{rozknp}
	G_{1/2} (K_{n,p}) \leq 2 \sqrt{\delta_n} \ \ \text{and}\ \ \ p_n \leq G_{1/2}(K_n) (\delta_n)^{-1/2} +1.
	\end{equation}
	  Proposition \ref{konstr} and Remark \ref{Fmala} easily imply that there exist closed sets $F_{n,p} \subset B(0, 5 \delta_n)$
	   such that  $K_{n,p} \subset L_{F_{n,k}}$. Using  \eqref{rozknp} we obtain 
	   \begin{multline*}
	    \sum_{p=1}^{p_n}  \lambda_2( B(0, 9 \delta_n)) \leq   ( G_{1/2}(K_n) (\delta_n)^{-1/2} +1)\cdot  81 \pi \cdot (\delta_n)^2 \\
	    =81 \pi\cdot  G_{1/2}(K_n) (\delta_n)^{3/2} + 81 \pi (\delta_n)^2 =: b_n.
	\end{multline*}
	 Using \eqref{rada}, we obtain that  $\sum b_n$ converges and consequently (see, e.g., \cite{Ko}) there exist  $c_{n,p}\in \R^2$ such that
	   the system  $\{ B(c_{n,p},  9 \delta_n): 0\leq n,\, 1\leq p \leq p_n\}$ is disjoint and its union is a bounded set.
	   Then  the set
	   $$ F:= \overline{ \bigcup \{F_{n,p}+ c_{n,p}:\  0\leq n,\, 1\leq p \leq p_n\}}$$
	    is clearly compact. Moreover, it is easy  to show that $K \setminus \{0\} \subset L_F$ and thus obtain (i).
			 Indeed, consider an arbitrary $r \in K \setminus \{0\}$ and choose $n\in \N$ and
			 $1\leq p \leq p_n$ such that $r\in K_{n,p}$. Then $S_r(F_{n,p}+c_{n,p})\neq \emptyset$
			 is not a Lipschitz manifold. Since $r \leq \delta_n$, we obtain that
			$   S_r(F) \cap  B(c_{n,p},  7 \delta_n) =   S_r(F_{n,p}+ c_{n,p})$ which easily implies $r \in L_F$.
			 \end{proof} 
			
			Using Lemma \ref{sumint}, we now easily infer Theorem \ref{chardon} from  Proposition \ref{Pchardon}. In fact,  Theorem \ref{chardon} is factually
			 only a more elegant reformulation of Proposition \ref{Pchardon}, which is, on the other hand,
			 more suitable for applications.
			
\begin{proof}[Proof of Theorem \ref{chardon}]
To infer Theorem \ref{chardon} from  Proposition \ref{Pchardon}, it is clearly sufficient to
 show that Theorem \ref{chardon} (iii) is equivalent to Proposition \ref{Pchardon} (iii).
 Thus it is sufficient to prove that, if $D>0$ and $K \subset [0,D]$ is a compact Lebesgue null set, then
 the conditions
\begin{equation}\label{koneint}
 \int_0^\infty G_{1/2}(K\cap[r,\infty)) \sqrt{r}\, dr<\infty
\end{equation}   
 and
\begin{equation}\label{konesou}
 \sum_{n=0}^{\infty}\ (\delta_n)^{3/2} \,  G_{1/2} (K \cap [\delta_{n+1}, \delta_n])  < \infty,
 \ \ \text{where}\ \ \delta_n:= D\,  2^{-n},
\end{equation}
 are equivalent. To this end, set  $g(r):=  G_{1/2}(K\cap[r,\infty)),\ r>0$. If $g(r)= \infty$
 for some $r>0$, then clearly both \eqref{koneint} and \eqref{konesou} (by Corollary \ref{adbt})
 fail. Otherwise we can use Lemma \ref{sumint} with $\alpha= 3/2$ and obtain that \eqref{koneint} is equivalent
  to the condition
	\begin{equation}\label{trihvezd}   
	 \sum_{k=0}^{\infty}  (g(\delta_{k+1})- g(\delta_k)) \cdot (\delta_k)^{3/2} < \infty.
	\end{equation}
For each $k\geq 0$,
$ g(\delta_{k+1}) \geq  g(\delta_k) + G_{1/2} (K \cap [\delta_{k+1}, \delta_k])$ by the definition 
 of the gap sum and Lemma \ref{aditgap} implies $ g(\delta_{k+1}) \leq  g(\delta_k) + G_{1/2} (K \cap [\delta_{k+1}, \delta_k]) + \sqrt D$. Consequently,
\begin{multline*} G_{1/2} (K \cap [\delta_{k+1}, \delta_k])
 (\delta_k)^{3/2} \leq
(g(\delta_{k+1})- g(\delta_k)) \cdot (\delta_k)^{3/2}\\ \leq G_{1/2} (K \cap [\delta_{k+1}, \delta_k])
 (\delta_k)^{3/2} + \sqrt D \cdot (\delta_k)^{3/2}
\end{multline*}
  and so \eqref{konesou} is equivalent to \eqref{trihvezd}.
\end{proof}			
			
			An interesting consequence of Proposition \ref{Pchardon} follows.

\begin{theorem}\label{45}
 \begin{enumerate}
 \item[(i)]  If $\emptyset \neq F \subset \R^2$ is a compact set, then $\cv(d_F)$ has zero $4/5$-dimensional  Minkowski content, in particular
$\overline{\dim}_M(\cv(d_F)) \leq 4/5$.  
\item[(ii)] There exists a compact set  $F \subset \R^2$ such that $G_{4/5} (\cv(d_F))       
 = \infty$ and so  $\overline{\dim}_M(\cv(d_F)) = 4/5$.
\end{enumerate}
\end{theorem}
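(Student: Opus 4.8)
The plan is to derive both parts from Proposition~\ref{Pchardon} by carefully tracking the quantitative estimate $S \le 10^{10} D^2$ and relating the series $S$ from \eqref{rada} to the degree-$\alpha$ gap sums that govern Minkowski content via \eqref{hawk} and \eqref{nulmin}. For part~(i), fix a compact $\emptyset \ne F \subset \R^2$, set $D := \diam F$ (the case $\diam F = 0$ being trivial) and write $K := \cv(d_F)$, $\delta_n := D 2^{-n}$, $K_n := K \cap [\delta_{n+1},\delta_n]$. By Proposition~\ref{Pchardon} we have $K \subset [0,D]$, $\lambda_1(K) = 0$, and $S = \sum_n (\delta_n)^{3/2} G_{1/2}(K_n) \le 10^{10} D^2 < \infty$. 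I would estimate $G_{4/5}(K_n)$ in terms of $G_{1/2}(K_n)$: each gap $I$ of $K_n$ has $|I| \le \delta_n$, so $|I|^{4/5} = |I|^{1/2} \cdot |I|^{3/10} \le |I|^{1/2} (\delta_n)^{3/10}$, whence $G_{4/5}(K_n) \le (\delta_n)^{3/10} G_{1/2}(K_n)$. Using Lemma~\ref{aditgap} across the two dyadic scales (as in the proof of \eqref{hawk}/\eqref{nulmin}, splitting into even- and odd-indexed blocks so the blocks are disjoint, and absorbing the $(k-1)(d-c)^\alpha$ correction terms $\sum_n (\delta_n)^{4/5} < \infty$), one obtains $G_{4/5}(K) \le \sum_n G_{4/5}(K_n) + (\text{convergent correction}) \le \sum_n (\delta_n)^{3/10} G_{1/2}(K_n) + C$. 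Now $(\delta_n)^{3/10} = (\delta_n)^{-6/5}\,(\delta_n)^{3/2}$, and since $(\delta_n)^{-6/5} \to \infty$ this is \emph{not} directly bounded by $S$; instead I would use that convergence of $S = \sum (\delta_n)^{3/2} G_{1/2}(K_n)$ forces $(\delta_n)^{3/2} G_{1/2}(K_n) \to 0$, hence for the tail $\sum_{n \ge N}(\delta_n)^{3/10}G_{1/2}(K_n)$ one needs a genuine interpolation rather than a crude bound. The cleaner route is to bound the $\ep$-parallel volume of $K$ directly: split $[0,D] = \bigcup_n [\delta_{n+1},\delta_n]$, and for $\ep$ in the dyadic range comparable to $\delta_m$, apply the elementary parallel-set volume formula \eqref{expl} on each block, using $G_{1/2}(K_n)$ to control the gap contribution at scale $\gtrsim \ep$ and $(\delta_n)^{4/5}$-type terms for the coarse part; summing and comparing with $\ep^{1 - 4/5} = \ep^{1/5}$, the convergence of $S$ gives $\lambda_1(K_\ep)/\ep^{1/5} \to 0$, i.e. $\cal M^{4/5}(K) = 0$, and then $\overline{\dim}_M K \le 4/5$ is immediate from the definition of upper Minkowski dimension.

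For part~(ii), the plan is to run Proposition~\ref{Pchardon}~(iii)$\Rightarrow$(i) on a carefully tuned $BT_{1/2}$ set $K$ that is ``as large as possible'' at small scales: take $K$ to be a countable union of scaled Cantor-type sets $K_n \subset [\delta_{n+1},\delta_n] = [D 2^{-n-1}, D 2^{-n}]$, choosing the dissection ratio $\alpha_n \nearrow \tfrac14$ (analogously to the construction in the proof of Theorem~\ref{TF}) so that $G_{1/2}(K_n)$ grows just slowly enough that \eqref{rada} still converges, i.e. $\sum_n (\delta_n)^{3/2} G_{1/2}(K_n) < \infty$, but $G_{1/2}(K_n)$ is of order $\delta_n^{1/2}$ times a slowly-growing factor, forcing $G_{4/5}(K_n)$ to be of order $\delta_n^{4/5}$ times \emph{that same factor}, with $\sum_n G_{4/5}(K_n) = \infty$. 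Concretely one checks, as in Theorem~\ref{TF}'s computation $G_{1/2}(C(\alpha)) = \tfrac{\sqrt{1-2\alpha}}{1-2\sqrt\alpha}$, that as $\alpha_n \to \tfrac14$ the factor blows up like $1/(1 - 2\sqrt{\alpha_n})$, and a routine choice (e.g. $\tfrac14 - \alpha_n \sim n^{-2}$ or similar) separates the two series. Proposition~\ref{Pchardon} then produces a compact $F \subset \R^2$ with $K \subset L_F \subset \cv(d_F)$, and since $G_{4/5}$ is monotone under inclusion of compact Lebesgue-null sets (Lemma~\ref{mongap}, valid for $0 < \alpha < 1$) we get $G_{4/5}(\cv(d_F)) \ge G_{4/5}(\overline K) = \infty$; combined with part~(i) and \eqref{hawk}, $\overline{\dim}_M(\cv(d_F)) = 4/5$.

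The main obstacle I expect is part~(i): the passage from the weighted series $S = \sum (\delta_n)^{3/2} G_{1/2}(K_n)$ to a statement about $\lambda_1(K_\ep)$ at a fixed scale $\ep$ requires estimating, for each $\ep$, the total length of gaps of $K$ of size $\gtrsim \ep$ \emph{across all dyadic blocks simultaneously}, not just within the block where $\ep$ lives. One has to argue that a gap of $K$ of length $\ell$ lying in $[\delta_{n+1},\delta_n]$ contributes at most $\ell$ to $\lambda_1(K_\ep)$, that within block $n$ the relevant gaps satisfy $\sum \ell \le \ell^{1/2}\cdot(\text{something}) \le \ep^{-1/2}\,\ell\,$-type bounds feeding into $G_{1/2}(K_n)$, and that blocks with $\delta_n \ll \ep$ contribute at most $O(\delta_n)$ in total (a convergent geometric-type tail once multiplied by the right power). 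Getting the powers of $\ep$ versus $\delta_n$ to line up so that the final quotient is $o(\ep^{1/5})$ — rather than merely $O(\ep^{1/5})$ — is the delicate bookkeeping step; it is the $4/5$-analogue of the computation \eqref{expl}--\eqref{prcl}--$D(r) \to 0$ already carried out in the Preliminaries for a single $BT_\alpha$ set, now run block-by-block with the block weights supplied by $S < \infty$.
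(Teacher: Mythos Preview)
Your plan for part~(i) is essentially the paper's approach: after setting $B:=K\cup\{\delta_n:n\ge 0\}$ (so the gap collection decomposes cleanly into dyadic blocks), the paper writes $\lambda_1(B_r)=2r+\sum_n\sigma_n$ with $\sigma_n:=\sum_{I\in\cal G_n}\min(2r,|I|)$, records the two bounds $\sigma_n\le\delta_n/2$ and $\sigma_n\le\sqrt{2r}\,a_n(\delta_n)^{-3/2}$ (from $\min(a,b)\le\sqrt{ab}$), and then---the step you flag as the obstacle---splits $\sum_n\sigma_n r^{-1/5}$ into \emph{three} ranges of $n$ according to whether $n\ge\lfloor k/5\rfloor+p$, $\lfloor k/6\rfloor<n<\lfloor k/5\rfloor+p$, or $n\le\lfloor k/6\rfloor$ (with $r\sim 2^{-k}$). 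The first range uses the trivial bound and sums geometrically; the third uses the second bound and sums geometrically in the other direction; the middle range uses the second bound and is controlled by the \emph{tail} $\sum_{n\ge\lfloor k/6\rfloor}a_n\to 0$. Your outline anticipates exactly this; what you are missing is just the specific choice of cut points $k/5$ and $k/6$ that makes all three pieces $o(1)$.

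Your plan for part~(ii), however, has a genuine gap. For the standard Cantor set $C(\alpha)$ one computes
\[
G_{1/2}(C(\alpha))=\frac{\sqrt{1-2\alpha}}{1-2\sqrt{\alpha}},\qquad
G_{4/5}(C(\alpha))=\frac{(1-2\alpha)^{4/5}}{1-2\alpha^{4/5}}.
\]
As $\alpha\nearrow\tfrac14$, the first quantity blows up (since $2\sqrt{\alpha}\to 1$), but the second tends to the finite limit $(1/2)^{4/5}/(1-2^{-3/5})$; the two gap sums do \emph{not} share ``the same slowly-growing factor''. Consequently, with $K_n\subset[\delta_{n+1},\delta_n]$ a scaled copy of $C(\alpha_n)$ and $\alpha_n\nearrow\tfrac14$, one has $G_{4/5}(K_n)\le C(\delta_n)^{4/5}$ with $C$ independent of $n$, and $\sum_n G_{4/5}(K_n)<\infty$. (Pushing $\alpha_n$ beyond $\tfrac14$ to make $G_{4/5}(C(\alpha_n))$ blow up forces $G_{1/2}(C(\alpha_n))=\infty$, destroying condition~\eqref{rada}.) The paper instead uses the much simpler construction of \emph{equidistant partitions}: take $K\cap[\delta_{n+1},\delta_n]$ to be $k_n:=\lfloor(n\delta_n)^{-4}\rfloor+1$ equally spaced points. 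Then $G_{1/2}(K_n)=k_n(\delta_n/(2k_n))^{1/2}\sim\sqrt{k_n\delta_n}$ gives $(\delta_n)^{3/2}G_{1/2}(K_n)\lesssim n^{-2}$, while $G_{4/5}(K_n)=k_n(\delta_n/(2k_n))^{4/5}\sim k_n^{1/5}(\delta_n)^{4/5}\gtrsim n^{-4/5}$, which diverges. The point is that for a set with all gaps of equal length $\ell$, the ratio $G_{4/5}/G_{1/2}=\ell^{3/10}$ can be made as small as desired by taking many short gaps, decoupling the two series; the self-similar Cantor sets lack this flexibility.
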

\begin{proof}
Set $D:= \diam{F}$, $\delta_n:= D\,  2^{-n}$, $K:= \cv(d_F)$ and $B:= K \cup\{\delta_n:\ n=0,1,\dots\}$. Since clearly $G_{1/2} (B\cap [\delta_{n+1}, \delta_n]) \leq G_{1/2} (K \cap [\delta_{n+1}, \delta_n]) + 2 \sqrt{\delta_n}$, we deduce from \eqref{rada} that
\begin{equation}\label{radaL}
\sum_{n=0}^{\infty} a_n < \infty,
\end{equation}
 where we put  $a_n:= (\delta_n)^{3/2} \,  G_{1/2} (B \cap [\delta_{n+1}, \delta_n]) $.
 Note that $B$ is compact and Lebesgue null.

For $r>0$, we will use the following obvious version of \eqref{expl}:
$$  \lambda_1(B_r)= 2r + \sum_{I \in \cal G_B} \min(2r, |I|).$$
 Consequently, denoting  $\cal G_n:= \cal G_{B\cap [\delta_{n+1}, \delta_n]}$ and
 $\sigma_n=\sigma_n(r):= \sum_{I \in \cal G_n} \min(2r, |I|)$, we obtain  $\lambda_1(B_r)= 2r + \sum_{n=0}^{\infty}
 \sigma_n$ and
\begin{equation}\label{podil}
\frac{\lambda_1(B_r)}{r^{1/5}} = 2 r^{4/5} + \sum_{n=0}^{\infty} \sigma_n r^{-1/5}.
\end{equation}
Obviously,  we have
\begin{equation}\label{prodh}
\sigma_n \leq \lambda_1([\delta_{n+1}, \delta_n]) = \delta_n/2.
\end{equation}
Further, using the inequality  $\min(a,b) \leq \sqrt a\cdot \sqrt{b},\ (a,b>0)$, we obtain
\begin{equation}\label{drodh}
\sigma_n \leq  \sum_{I \in \cal G_n} \sqrt{2r} \sqrt{|I|} =  \sqrt{2r}\cdot G_{1/2} (B \cap [\delta_{n+1}, \delta_n]) = \sqrt{2r}\cdot a_n (\delta_n)^{-3/2}.
\end{equation}

Now fix an arbitrary $\ep>0$. Choose $p\in \N$ such that  $4D 2^{-p} < \ep/4$. 

Further, consider $0<r<1$ and the corresponding $k=k(r)$ such that  $2^{-(k+1)} \leq r < 2^{-k}$.
Write
\begin{align*}
\sum_{n=0}^{\infty} \sigma_n r^{-1/5} &= \sum_{n\geq \lfloor\frac{k}{5}\rfloor+p} \sigma_n r^{-1/5} 
 + \sum_{\lfloor\frac{k}{6}\rfloor<n < \lfloor\frac{k}{5}\rfloor+p} \sigma_n r^{-1/5} + 
\sum_{0 \leq n \leq \lfloor\frac{k}{6}\rfloor} \sigma_n r^{-1/5}\\
&=: A(r) + B(r) + C(r).
\end{align*}
Using \eqref{prodh}, we obtain
\begin{align*}
A(r) &\leq \sum_{n= \lfloor\frac{k}{5}\rfloor+p}^{\infty} \frac{\delta_n}{2} \left(2^{-(k+1)}\right)^{-\frac 15} =
 \sum_{n= \lfloor\frac{k}{5}\rfloor+p}^{\infty}  \frac D2 \cdot 2^{-n +\frac k5 +\frac 15}
= \frac D2\cdot 2^{\frac k5 +\frac 15}\cdot 2 \cdot 2^{-\lfloor\frac{k}{5}\rfloor -p}\\ 
&\leq 4D 2^{-p} < \ep/4.
\end{align*}

Using \eqref{drodh}, we obtain
\begin{equation}\label{pomdr}
\sigma_n r^{-\frac{1}{5}} \leq \sqrt{2r}\cdot a_n (\delta_n)^{-\frac{3}{2}}r^{-\frac{1}{5}} =
 \sqrt{2} D^{-\frac{3}{2}}    a_n 2^{\frac{3n}{2}} r^{\frac{3}{10}}
\leq \sqrt{2} D^{-\frac{3}{2}} a_n \cdot 2^{\frac{3n}{2}-\frac{3k}{10}}.
\end{equation}
and
$$ 
B(r) \leq \sum_{\lfloor\frac{k}{6}\rfloor<n < \lfloor\frac{k}{5}\rfloor+p}  \sqrt{2} D^{-\frac{3}{2}} a_n \cdot 2^{\frac{3n}{2}-\frac{3k}{10}} \leq  \sqrt{2} D^{-\frac{3}{2}}  2^{\frac{3}{2}(1+p)} \sum_{n= \lfloor\frac{k}{6}\rfloor}^{\infty} a_n.
$$
 So, since $\sum_{n=0}^{\infty} a_n$ converges and $k(r)\to \infty,\ r\to 0+$, there exists
 $1>r_1>0$ such that $B(r)< \ep/4$ whenever  $0<r<r_1$.

Using \eqref{pomdr}, we obtain
$$ C(r)\leq \sqrt{2} D^{-\frac{3}{2}}  2^{-\frac{3k}{10}} 
\sum_{n=0}^{\lfloor\frac{k}{6}\rfloor}   a_n \cdot \left(2^{3/2}\right)^n.$$
 Consequently, denoting $M:= \max\{a_n: n\geq 0\}$, we obtain
$$C(r) \leq \sqrt{2} D^{-\frac{3}{2}} M 2^{-\frac{3k}{10}} \cdot 2 \cdot 2^{\frac 32(\frac k6+1)} \leq 
\sqrt{2} D^{-\frac{3}{2}} M \cdot 2 \cdot 2^{\frac 32} \cdot 2^{-\frac{k}{20}}.
$$
So there exists $0<r_0<r_1$ such that $0<r<r_0$ implies  $C(r)< \ep/4$ and $2r^{4/5} < \ep/4$,
 and consequently, by \eqref{podil} and the above estimates of $A(r)$ and $B(r)$, 
$$
\frac{\lambda_1(B_r)}{r^{1/5}} < \frac\ep 4 + \frac\ep 4 +\frac\ep 4 +\frac\ep 4 = \ep.
$$
Thus we have proved that  $\frac{\lambda_1(B_r)}{r^{1/5}} \to 0,\ r\to 0+$; in other words
 $B$ (and consequently also $\cv(d_F)$) has  zero $4/5$-dimensional  Minkowski content.

	To prove (ii), set for each $n\in \N$, $\delta_n:= 2^{-n}$ and $k_n:= \lfloor(n\delta_n)^{-4}\rfloor+1$. Clearly
	 \begin{equation}\label{odhk}
	  (n\delta_n)^{-4} \leq k_n \leq 2 (n\delta_n)^{-4},\ \ n \in \N.
	  \end{equation}
	  For each $n \in \N$, let $\delta_n/2 =t_0^n <t_1^n<\dots < t_{k_n}^n=\delta_n$ be the equidistant partition of
 the interval $[\delta_n/2, \delta_n]$. Set
   $K:=  \{t^n_i:\ n\in \N, 0\leq i \leq k_n\} \cup \{0\}.$
   We will show that $K$ satisfies condition (ii) of Theorem \ref{chardon} and so there exists a compact set  $F \subset \R^2$ such that $K \subset \cv(d_F)$. To this end set $D:=1$.  Using \eqref{odhk}, we obtain
   $$ (\delta_n)^{3/2} \,  G_{1/2} (K \cap [\delta_{n+1}, \delta_n]) =  (\delta_n)^{3/2} \, k_n \sqrt{\frac{\delta_n}{2 k_n}}
    \leq  (n \delta_n)^{-2} (\delta_n)^2 = n^{-2},$$
     and consequently \eqref{rada} holds.
     
    Further, using \eqref{odhk}, we obtain 
 $$  G_{4/5} (K \cap [\delta_{n+1}, \delta_n]) = k_n \left(\frac{\delta_n}{2 k_n}\right)^{4/5} \geq
 \frac 12 (k_n)^{1/5} (\delta_n)^{4/5} \geq \frac 12 n^{-4/5},$$
 and consequently
$$  G_{4/5} (K) = \sum_{n=1}^{\infty}  G_{4/5} (K \cap [\delta_{n+1}, \delta_n]) = \infty.$$
 Now (ii) follows by Lemma \ref{mongap} and \eqref{hawk}.
\end{proof}

We present also the following slightly curious consequence of Proposition \ref{Pchardon}
 which clearly cannot be obtained from any ``measure smallness'' of $\cv(d_F)$.

\begin{proposition}\label{konmn}
Let $D>0$ and $[\alpha, \beta] \subset (0,\infty)$. Then there exists a finite set
 $P \subset [\alpha, \beta]$ of cardinality $p:=\lfloor 10^{25} D^4 (\beta-\alpha)^{-1} \beta^{-3/2}\rfloor +3 $\ such that \ $P \setminus \cv(d_F) \neq \emptyset$ whenever
 $F \subset \R^2$ is compact and $\diam F \leq D$.
	\end{proposition}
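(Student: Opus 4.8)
The plan is to use the quantitative ``moreover'' part of Proposition~\ref{Pchardon}, namely the bound $S\le 10^{10}D^2$, applied on the dyadic scale that straddles the interval $[\alpha,\beta]$. Suppose, for contradiction, that some finite set $P\subset[\alpha,\beta]$ fails the conclusion, i.e.\ for every such $P$ there is a compact $F\subset\R^2$ with $\diam F\le D$ and $P\subset\cv(d_F)$. The idea is to choose $P$ to be a sufficiently fine equidistant partition of $[\alpha,\beta]$; then any $K=\overline A$ with $P\subset K$ (in particular $K=\cv(d_F)$) has a large degree-$\tfrac12$ gap sum on the single dyadic block containing $[\alpha,\beta]$, and this will violate $S\le 10^{10}D^2$.

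Concretely, first I would fix $D_0:=\max(D,\beta)$ so that $D_0\ge\diam F$ is legitimate in Proposition~\ref{Pchardon}, and set $\delta_n:=D_0 2^{-n}$. Choose the index $n_0$ with $\delta_{n_0+1}<\alpha$ and $\alpha\le\delta_{n_0}$... more carefully, pick $n_0$ so that $[\alpha,\beta]$ meets the dyadic block $[\delta_{n_0+1},\delta_{n_0}]$; since $\delta_{n_0}\ge\alpha\ge$ (a fixed fraction of) $\beta$ up to the crude comparison $\beta\le D_0$, we get $\delta_{n_0}\ge c\beta$ for an absolute $c$. Now take $P$ to be an equidistant partition of $[\alpha,\beta]$ into $q$ subintervals, so consecutive points are at distance $(\beta-\alpha)/q$. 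If $P\subset K$ then each of these $q$ subintervals either lies in $K$ (impossible in bulk, since $K$ is Lebesgue null) or contains a gap of $K$; more robustly, by Lemma~\ref{aditgap} and subadditivity of $t\mapsto t^{1/2}$ one gets
$$G_{1/2}(K\cap[\delta_{n_0+1},\delta_{n_0}])\ \ge\ c'\, q^{1/2}\,(\beta-\alpha)^{1/2}$$
for an absolute constant $c'$ — the lower bound comes from the $q$ forced gaps each of size comparable to $(\beta-\alpha)/q$, summed through their square roots, minus an $O(1)$ correction from Lemma~\ref{aditgap}. Then the single term $(\delta_{n_0})^{3/2}G_{1/2}(K\cap[\delta_{n_0+1},\delta_{n_0}])$ is at least $c''\,\beta^{3/2}(\beta-\alpha)^{1/2}q^{1/2}$, which must be $\le S\le 10^{10}D_0^2\le 10^{10}(D+\beta)^2$. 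Solving for $q$, any $q$ exceeding an explicit bound of the shape $C\,(D+\beta)^{4}\beta^{-3}(\beta-\alpha)^{-1}$ yields a contradiction; choosing $p$ as in the statement (one more than this bound, with room to spare in the constant $10^{25}$ and the additive $+3$ absorbing rounding and the $\beta$ vs $D$ comparison) finishes the argument.

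The main obstacle I expect is the bookkeeping in the lower bound for $G_{1/2}(K\cap[\delta_{n_0+1},\delta_{n_0}])$: one must argue carefully that $q$ equally spaced points of $P$ inside the dyadic block, together with Lebesgue-nullity of $K$, force at least $q-O(1)$ genuine gaps of total length comparable to $|[\alpha,\beta]\cap[\delta_{n_0+1},\delta_{n_0}]|$, and that the degree-$\tfrac12$ sum of these gaps is minimized (up to constants) when they are of equal size, giving the $q^{1/2}$ growth; the Lemma~\ref{aditgap} correction term $(k-1)(d-c)^{\alpha}$ must be checked not to swamp this. A secondary, purely arithmetic nuisance is tracking the relation between $\delta_{n_0}$, $\alpha$, $\beta$ and $D$ so that all absolute constants can be absorbed into $10^{25}$ and the additive $+3$; since the paper makes no effort toward optimal constants (Remark~\ref{noef}), this is routine once the scaling exponents are correct.
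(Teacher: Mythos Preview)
Your overall strategy is exactly the paper's: take $P$ equidistant, assume $P\subset\cv(d_F)$, and contradict the bound $S\le 10^{10}D^2$ from the ``moreover'' part of Proposition~\ref{Pchardon} via a lower bound on the gap sum restricted to a single dyadic block. Two steps in your execution are not right, however.

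The main problem is your justification that $\delta_{n_0}\ge c\beta$ for an absolute $c$: you write ``$\delta_{n_0}\ge\alpha\ge$ (a fixed fraction of) $\beta$'', but nothing in the hypotheses prevents $\alpha/\beta$ from being arbitrarily small, so the dyadic block containing $\alpha$ can be tiny compared to $\beta$ and may capture an arbitrarily small portion of $[\alpha,\beta]$. The paper fixes this by anchoring at $\beta$ rather than $\alpha$: with $\beta\in(\delta_{k+1},\delta_k]$ it sets $n=k$ or $n=k+1$ according to whether $\beta-\delta_{k+1}\ge\tfrac14(\beta-\alpha)$, checks that then $[c,d]:=[\alpha,\beta]\cap[\delta_{n+1},\delta_n]$ satisfies $d-c\ge(\beta-\alpha)/4$ and $\delta_n\ge\beta/2$, and places the equidistant set $P$ on $[c,d]$ rather than on all of $[\alpha,\beta]$. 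This guarantees $P\subset[\delta_{n+1},\delta_n]$, which is what makes the single-block estimate legitimate.

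Second, the lower bound on $G_{1/2}(K\cap[\delta_{n+1},\delta_n])$ does not come from Lemma~\ref{aditgap}: that lemma is an \emph{upper} bound for gap sums of unions and yields nothing here. The correct tool is Lemma~\ref{mongap}: once $P\subset\cv(d_F)\cap[\delta_{n+1},\delta_n]$ and the latter is compact and Lebesgue null, monotonicity gives directly
\[
G_{1/2}\bigl(\cv(d_F)\cap[\delta_{n+1},\delta_n]\bigr)\ \ge\ G_{1/2}(P)=(p-1)\sqrt{\tfrac{d-c}{p-1}}=\sqrt{(p-1)(d-c)},
\]
with no ``forced gaps'' counting and no correction term to control. (The subadditivity of $t\mapsto t^{1/2}$ you invoke is precisely the content of the proof of Lemma~\ref{mongap}.) Finally, the case $\beta>D$ is handled in one line via \eqref{cvdiam}: any point of $P$ exceeding $\diam F$ already lies outside $\cv(d_F)$; there is no need to introduce $D_0=\max(D,\beta)$.
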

\begin{proof}
Set $\delta_n:= D 2^{-n}$, $n=0,1,\dots$.
 We can suppose that $\beta \leq D$; otherwise the assertion is trivial by \eqref{cvdiam}.
 Then $\beta \in (\delta_{k+1}, \delta_k]$ for some $k$.
Set
$$n:=\begin{cases}
k &\text{ if } \beta-\delta_{k+1}\geq\frac 14(\beta-\alpha),\\
k+1 &\text{ otherwise.}
\end{cases}$$
Then, for $[c,d]:=[\alpha,\beta]\cap[\delta_{n+1},\delta_n]$, we have
\begin{equation} \label{asppul}
d-c\geq \frac{\beta-\alpha}{4}\,\text{ and }\,\delta_n\geq\frac{\beta}{2}.
\end{equation}
Let $c = t_0< t_1< \dots < t_{p-1}= d$ be the equidistant partition of
 the interval $[c,d]$ and $P:=\{t_0,\dots,t_{p-1}\}$. Suppose to the contrary
 that $F \subset \R^2$ is compact, $\diam F \leq D$ and 
$P \subset \cv(d_F)$. Then we have, by the ``moreover part'' of Proposition
 \ref{Pchardon} and \eqref{asppul},
\begin{multline*}
 10^{10} D^2 \geq G_{1/2} (\cv(d_F) \cap [\delta_{n+1}, \delta_n])
 (\delta_n)^{3/2} \geq   G_{1/2}(P) (\beta/2)^{3/2}\\
= (p-1) \left(\frac{d-c}{p-1}\right)^{1/2} (\beta/2)^{3/2} \geq (p-1)^{1/2} \left(\frac{\beta-\alpha}{4}\right)^{1/2}(\beta/2)^{3/2}\\ \geq  8^{-1}(p-1)^{1/2} (\beta-\alpha)^{1/2} \beta^{3/2}, 
\end{multline*}
 which, as an elementary computation shows, contradicts the choice of $p$.
\end{proof}
\begin{remark}\label{porinn}
A similar argument gives the following result
saying that $\cv(d_F)$ is porous at $0$ in a (rather weak) sense.
\medskip

{\it Let $F \subset \R^d$ be compact. For each $r>0$, let $\gamma_r(F)$ be the lenght of the largest
 component of $(0,r)\setminus \cv(d_F)$. Then
$$ \liminf_{r\to 0+}  \frac{\gamma_r(F)}{r^5} >0.$$ }
\end{remark}

\section{Results in Riemannian manifolds}
\textsl{}
\newcommand{\D}{\mathrm{d}}

Let $M$ be a two-dimensional smooth, connected and complete Riemannian manifold, and let $\dist(\cdot,\cdot)$ denote the intrinsic distance in $M$. As a consequence of the Hopf-Rinow theorem (see \cite{Petersen}*{Theorem~5.7.1}), $M$ is boundedly compact, i.e., 
\begin{equation} \label{bcom}
\text{every bounded and closed subset of }M\text{ is compact.}
\end{equation} 

Let $(g_p:\, p\in M)$ be the Riemannian metric on $M$ and let $(U,\varphi)$ be a chart of $M$. The induced metric in $\varphi(U)\subset\R^2$,
$$\tilde{g}_x(\cdot,\cdot):= g_{\varphi^{-1}(x)}\left(D\varphi^{-1}(x)(\cdot),D\varphi^{-1}(x)(\cdot)\right),\quad x\in\varphi(U),$$
where $D\varphi^{-1}(x):\R^2\to T_{\varphi^{-1}(x)}M$ is the differential of $\varphi^{-1}$ at $x\in\varphi(U)$,
is smooth and, hence, given any compact subset $K\subset U$ there exist constants $0<c<C<\infty$ such that
\begin{equation}  \label{Riem_m}
c^2\|u\|^2\leq\tilde{g}_x(u,u)\leq C^2\|u\|^2,\quad u\in\R^2,\, x\in\varphi(K).
\end{equation}

Any chart of $M$ is locally bi-Lipschitz. This is a well-known fact. We present a proof since we could not find a direct reference (although, the proof is implicitly contained in many textbooks, as e.g.\ \cite{Petersen}*{proof of Theorem~5.3.8}).

\begin{fact}  \label{F_bilip}
For any chart $(U,\varphi)$ of $M$ and $p_0\in U$ there exists an open neighbourhood $V\subset U$ of $p_0$ such that $\varphi|V$ is bi-Lipschitz.
\end{fact}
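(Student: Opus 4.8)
The plan is to localize the problem using the metric comparison \eqref{Riem_m}, which says that in a chart a compact piece of $M$ carries a Riemannian distance comparable (up to multiplicative constants $c$ and $C$) to the Euclidean distance on $\varphi(U)\subset\R^2$. Fix a chart $(U,\varphi)$ and $p_0\in U$. First I would choose a small compact neighbourhood $K\subset U$ of $p_0$ whose image $\varphi(K)$ contains a Euclidean ball around $\varphi(p_0)$, and apply \eqref{Riem_m} to get constants $0<c<C<\infty$ controlling $\tilde g$ on $\varphi(K)$. Then I would take $V$ to be a slightly smaller open neighbourhood of $p_0$, small enough that any two points of $V$ can be joined inside $K$.

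The core of the argument is the two inequalities that constitute bi-Lipschitzness of $\varphi|V$. For the easy direction, the length of the Euclidean segment $[\varphi(p),\varphi(q)]$ (which lies in $\varphi(K)$ by convexity, if $V$ is chosen as a ball) pulled back to $M$ is a curve from $p$ to $q$ whose $\tilde g$-length is at most $C\|\varphi(p)-\varphi(q)\|$; hence $\dist(p,q)\le C\|\varphi(p)-\varphi(q)\|$, i.e.\ $\varphi^{-1}$ is $C$-Lipschitz on $\varphi(V)$. For the reverse direction one wants $\|\varphi(p)-\varphi(q)\|\le c^{-1}\dist_M(p,q)$ for $p,q\in V$. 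The subtlety here is that a shortest (or near-shortest) path in $M$ from $p$ to $q$ need not stay inside $K$, so one cannot directly integrate the lower bound in \eqref{Riem_m} along it. The standard fix, which I would carry out, is to shrink $V$: pick $\rho>0$ with $\overline B(\varphi(p_0),2\rho)\subset\varphi(K)$ and let $V:=\varphi^{-1}(B(\varphi(p_0),\epsilon))$ for $\epsilon$ small compared with $c\rho/C$. Then for $p,q\in V$ any curve in $M$ joining them that leaves $\varphi^{-1}(B(\varphi(p_0),2\rho))$ must cross the annulus $\varphi^{-1}(\overline B(\varphi(p_0),2\rho)\setminus B(\varphi(p_0),\rho))$, and by the easy direction applied on that annulus such a curve has $\tilde g$-length at least $c\rho$, which exceeds $C\cdot 2\epsilon\ge\dist(p,q)$-competing paths staying inside; hence the infimum defining $\dist(p,q)$ is attained (or approached) by curves contained in $\varphi^{-1}(B(\varphi(p_0),2\rho))\subset K$. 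On such a curve $\gamma$, writing $\eta:=\varphi\circ\gamma$, we get $\mathrm{length}_{\tilde g}(\gamma)\ge c\cdot\mathrm{length}_{\mathrm{eucl}}(\eta)\ge c\|\varphi(p)-\varphi(q)\|$, and taking the infimum gives $\dist(p,q)\ge c\|\varphi(p)-\varphi(q)\|$.

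I would assemble these two estimates to conclude that $\varphi|V$ is bi-Lipschitz with constants $c$ and $C$ (equivalently, $\max(C,1/c)$ as a single Lipschitz constant for $\varphi$ and its inverse). The main obstacle is exactly the "no escape" step in the previous paragraph: one must rule out that a cheap path between nearby points wanders far away where \eqref{Riem_m} gives no control. This is handled purely by the annulus/length argument above — it only uses the already-established upper Lipschitz bound of $\varphi^{-1}$ on the compact set $K$ together with a quantitative choice of the radius $\epsilon$ of $V$ — so no appeal to completeness or to the exponential map is actually needed, though one could alternatively invoke \eqref{bcom} to get an honest minimizing geodesic and then make the same containment argument. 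Everything else (convexity of Euclidean balls, pulling curves back and forth through $\varphi$, and integrating the comparison \eqref{Riem_m} along a curve) is routine.
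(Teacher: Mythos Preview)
Your argument is correct and follows the same overall scheme as the paper: restrict to a compact $K\subset U$, use \eqref{Riem_m} to compare Riemannian and Euclidean lengths of curves, and get the two Lipschitz inequalities by integrating along a straight Euclidean segment (for the upper bound) and along a near-shortest curve in $M$ (for the lower bound).

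The only genuine difference is in how you trap the (near-)minimizing curve inside $K$. The paper simply takes $K=\overline{B}(p_0,r)$ (a \emph{metric} ball in $M$, compact by \eqref{bcom}), sets $V\subset B(p_0,r/3)$, and invokes the existence of a minimizing geodesic between $p,q\in V$; since its length is at most $2r/3$, the triangle inequality forces it to stay in $K$. This is shorter but uses completeness. Your annulus barrier argument---bounding from below the Riemannian length of any curve that escapes $\varphi^{-1}(B(\varphi(p_0),2\rho))$ by $c\rho$, and choosing $\epsilon$ so that $2C\epsilon<c\rho$---achieves the same containment for approximate minimizers without ever appealing to Hopf--Rinow or \eqref{bcom}. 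So your proof is slightly more self-contained (it works on any Riemannian manifold, complete or not), at the cost of one extra paragraph; the paper's version is terser but relies on the ambient hypothesis of completeness, as you yourself note.
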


\begin{proof}
Choose $r,\rho>0$ such that $K:=\overline{B}(p_0,r)\subset U$ (note that $K$ is compact by \eqref{bcom}) and $B:=B(\varphi(p_0),\rho)\subset\varphi(K)$. 
\eqref{Riem_m} implies that for any curve $\gamma:[a,b]\to K$, its length $\ell(\gamma)$ is related to the Euclidean length $\ell_e(\tilde{\gamma})$ of its image $\tilde{\gamma}:=\varphi\circ\gamma$ by
$$c\,\ell_e(\tilde{\gamma})\leq\ell(\gamma)\leq C\,\ell_e(\tilde{\gamma}).$$
Choosing for $\gamma$ a minimizing geodesic path connecting two points $p,q\in B(p_0,\frac r3)$ (which must clearly be contained in $K$), we obtain $c\|\varphi(p)-\varphi(q)\|\leq\dist(p,q)$. On the other hand, if $p,q\in V:=B(p_0,\frac r3)\cap\varphi^{-1}(B)$ and the curve $\gamma$ is chosen such that $\tilde{\gamma}$ is the straight segment connecting $\varphi(p)$ and $\varphi(q)$ (in $B\subset\R^2$), we get $\dist(p,q)\leq C \|\varphi(p)-\varphi(q)\|$. Thus, $\varphi$ is bi-Lipschitz on $V$.
\end{proof}

\begin{definition}  \rm
Let $G\subset M$ be open. A function $f:G\to\R$ is \emph{locally semiconcave} if for each chart $(U,\varphi)$ with $U\subset G$, $f\circ\varphi^{-1}$ is locally semiconcave.
\end{definition}

\begin{remark} \rm
\begin{enumerate}
\item Since the change of coordinates is always smooth, and composition with smooth mappings preserves semiconcavity (see \cite{MM}*{Prop.~2.6}), it is enough to check the above properties on any family of charts covering $G$.
\item It is clear from the definition that the above properties are independent on the particular Riemannian metric, they depend only on the differentiability structure of $M$.
\item For a locally semiconvex function $f:G\to\R$ from the above definition, the (one-sided) directional derivative $\D f(p;v)$ exists for any $p\in G$ and $v\in T_pM$ (since the Euclidean case is obvious).
\end{enumerate}
\end{remark}

\begin{definition} \label{D_crit} \rm
Let $G\subset M$ be open and let $f:G\to\R$ be locally Lipschitz. A point $p\in G$ is \emph{critical} for $f$ if for each chart $(U,\varphi)$ with $p\in U$, $\varphi(p)$ is a critical point for $f\circ\varphi$, i.e., if
$$0\in\partial^C(f\circ\varphi^{-1})(\varphi(p)).$$
\end{definition}

\begin{remark} \label{R_crit}\rm
Again, the criticality of a point can be equivalently verified on any chart around $p$. Indeed, if $(U,\varphi)$ and $(V,\psi)$ are two such charts, we have 
by the chain rule for the Clarke subdifferential (see \cite{Cl}*{Theorem~2.3.10})
$$\partial^C(f\circ\psi^{-1})(\psi(p))\subset \partial^C(f\circ\varphi^{-1})(\varphi(p))\circ D(\varphi\circ\psi^{-1})(\psi(p)),$$
and since the differential $D(\varphi\circ\psi^{-1})(\psi(p))$ is regular,  $0\in\partial^C(f\circ\varphi^{-1})(\varphi(p))$ whenever $0\in \partial^C(f\circ\psi^{-1})(\psi(p))$. The other implication follows by symmetry.
\end{remark}

Let $F\subset M$ be a proper nonempty closed subset. The distance function 
$$d_F: p\mapsto \dist(p,F),\quad p\in M\setminus F,$$
is locally semiconcave (see \cite{MM}). For any $p\in M\setminus F$ there exists a unit-speed geodesic path $\gamma:[0,r]\to M$ with $\gamma(0)=p,\gamma(r)\in F$, $r=\dist(p,F)$. We call such a path an \emph{$F$-segment emanating from} $p$. 

Of course, there can exist more $F$-segments emanating from $p$. Given a unit tangent vector $v\in T_pM$, let $\alpha(v)\geq 0$ be the minimal angle $\angle(v,\gamma'(0))$ formed by $v$ and an $F$-segment emanating from $p$ (for the existence of the minimum we use the compactness of $\{\gamma'(0):\,\gamma\text{ is an }F\text{-segment emanating from }p\}$, see \cite{Petersen}*{Prop.~12.1.2~(1)}). The following property is often used as definition of critical points of $d_F$:
\begin{equation} \label{Grove}
p\text{ is critical for }d_F\text{ if and only if }\alpha(v)\leq\tfrac\pi 2\text{ for any unit vector }v\in T_pM.
\end{equation}
Grove \cite{Grove}*{p.~360} claims that a simple argument based on standard local distance comparison shows that the definition \eqref{Grove} is equivalent to another definition (based on \cite{Grove}*{Eq.~(1.1)}), which can be shown to be equivalent to our Definition~\ref{D_crit} using Lemma~\ref{charcrit} and the local semiconcavity of $d_F$ on $M\setminus F$.

The equivalence of Definition~\ref{D_crit} for $f:=d_F$ and \eqref{Grove} can also be derived from the following property of the directional derivatives of $d_F$:
\begin{equation}  \label{cosinus}
\D(d_F)(p,v)=-\cos\alpha(v).
\end{equation}
Equation \eqref{cosinus} is well-known, though it is difficult to find a direct proof. In the setting of Alexandrov spaces with curvature bounded from below, it can be found in \cite{BGP92}*{Example~11.4} (cf.\ also \cite{Plaut}*{p.~880} or \cite{BBI01}*{Exercise~4.5.11, Remark~4.5.12}). When applying to a (noncompact) complete Riemannian manifold, an additional localization argument has to be applied (similarly as in the proof of Proposition~\ref{P1} below). A direct proof in the Riemannian setting using the first variation formula is also possible, using an argumentation similar to the proof of \cite{Petersen}*{Prop.~12.1.2~(4)}.

The following inequality is a version of Ferry's inequality \cite{Fe}*{Proposition~1.5} in Riemannian manifolds.

\begin{proposition}  \label{P1}
Let $M$ be a complete, connected two-dimensional Riemannian manifold, let $F\subset M$ be its closed nonempty proper subset and $\emptyset\neq K\subset M\setminus F$ a compact set. Then there exists $C>0$ such that for any points $x,y\in K$ that are critical for $d_F$, we have
$$|d_F(x)^2-d_F(y)^2|\leq C\: \dist(x,y)^2.$$
\end{proposition}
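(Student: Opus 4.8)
The plan is to reduce the Riemannian statement to the Euclidean Ferry inequality \eqref{ferryn} by a localization-and-comparison argument. Since $K$ is compact and disjoint from $F$, set $\delta_0:=\dist(K,F)>0$; by \eqref{bcom} we may also fix a compact neighbourhood of $K$ inside $M\setminus F$. First I would cover $K$ by finitely many charts $(U_i,\varphi_i)$ with $\overline{U_i}$ compact and contained in $M\setminus F$, chosen small enough (using Fact~\ref{F_bilip}) that each $\varphi_i$ is bi-Lipschitz on a neighbourhood of $\overline{U_i}$, with bi-Lipschitz constants controlled by \eqref{Riem_m} via constants $0<c_i<C_i<\infty$. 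Shrinking further, arrange a Lebesgue-number type condition: there is $\rho>0$ such that any two points $x,y\in K$ with $\dist(x,y)<\rho$ lie in a common chart $U_i$. For pairs with $\dist(x,y)\geq\rho$ the inequality $|d_F(x)^2-d_F(y)^2|\leq C\dist(x,y)^2$ holds trivially (the left side is bounded because $K$ is bounded and $d_F$ is continuous, hence bounded on $K$, so it suffices to take $C$ large), so the real work is for nearby pairs inside one chart.

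Next, fix such a chart $(U,\varphi)$ and work with $h:=d_F\circ\varphi^{-1}$ on $\varphi(U)\subset\R^2$. The function $h$ is locally semiconcave (it is $d_F$ in smooth coordinates, see the discussion before Definition~\ref{D_crit}), hence locally DC, so $\Crit(h)$ in the sense of Definition~\ref{critreg} is defined, and by Remark~\ref{R_crit} (criticality is chart-independent) a point $p\in K\cap U$ is critical for $d_F$ exactly when $\varphi(p)\in\Crit(h)$. Now I would mimic the proof of the Euclidean Ferry inequality: the key geometric input is \eqref{cosinus}, $\D(d_F)(p,v)=-\cos\alpha(v)$, together with \eqref{Grove}, giving that at a critical point $p$ and for the direction $v$ pointing along any $F$-segment one has $\D(d_F)(p,v)=-\cos\alpha\leq 0$ appropriately, while more precisely Ferry's original estimate compares $d_F$ at two critical points via the triangle inequality applied along an $F$-segment from one of them and the angle condition at the other. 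In the Riemannian setting the Euclidean law of cosines is replaced by a distance-comparison estimate: if $\gamma$ is a minimizing geodesic from $x$ to a nearest point of $F$, and $y$ is close to $x$, then $d_F(y)^2\leq \dist(y,\gamma(t))^2 + (d_F(x)-t)^2 + (\text{error})$ for the appropriate $t$, where the error is $O(\dist(x,y)^3)$ because geodesics and the metric are smooth; combined with the symmetric estimate (swap $x$ and $y$, using criticality of $y$) and the angle condition $\alpha\leq\pi/2$ at both points, one obtains $|d_F(x)^2-d_F(y)^2|\leq C\dist(x,y)^2$ with $C$ depending only on the chart data (the constants $c_i,C_i$ and bounds on the second fundamental form / Christoffel symbols over the compact neighbourhood). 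Taking the maximum of the resulting constants over the finite cover, and combining with the trivial bound for far-apart pairs, yields the global $C$.

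The main obstacle is making the comparison estimate rigorous with the correct error term $O(\dist(x,y)^3)$ and, crucially, with a constant that is uniform over $K$. The clean way is to avoid ad hoc Jacobi-field computations: transport the problem into a single chart where $h=d_F\circ\varphi^{-1}$ is semiconcave, observe that $h^2$ is then locally DC as well, and prove directly that the set of $\R^2$-critical points of $h$ satisfies a Euclidean-type Ferry inequality with constant depending only on the semiconcavity constant of $h$ and on $\inf_{\varphi(K)} h>0$ — this is essentially Lemma~\ref{semdist} applied in the chart (semiconcavity of $d_F$ with a uniform modulus on a compact piece away from $F$), which lets one write $h(x)=a\|x\|^2-\gamma(x)$ locally with $\gamma$ convex and $a$ controlled, and then the argument of \eqref{ferryn} (which itself is a DC/semiconcavity computation) goes through verbatim in coordinates. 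Pulling back the Euclidean distance to the Riemannian one costs only the bi-Lipschitz constants from \eqref{Riem_m}, which are uniform on the compact cover. Thus the hard part is purely bookkeeping of uniform constants; the conceptual content is that ``$d_F$ is uniformly semiconcave on compact subsets of $M\setminus F$,'' which is exactly what \cite{MM} together with \eqref{Riem_m} provides.
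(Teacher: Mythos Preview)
Your semiconcavity route is correct but genuinely different from the paper's argument. The paper works globally and intrinsically: it first modifies the metric outside a large ball (via Greene's theorem) so that the sectional curvature is bounded below by some $k<0$ on all of $M$, then for critical $x,y$ uses \eqref{Grove} to pick an $F$-segment $\gamma_{xz}$ from $x$ making angle $\leq\pi/2$ with the minimizing geodesic $\gamma_{xy}$, and applies Toponogov's comparison theorem to the geodesic triangle $(x,y,z)$. The hyperbolic law of cosines in the model space $H_k^2$ gives $\cosh(c/\kappa)\leq\cosh(a/\kappa)\cosh(b/\kappa)$ (since the comparison angle is $\leq\pi/2$), and elementary estimates on $\cosh$ yield the inequality with the explicit constant $C=\cosh^2(2R/\kappa)$. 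No charts are used. Your approach instead localizes in bi-Lipschitz charts and exploits that $h=d_F\circ\varphi^{-1}$ is semiconcave with a uniform modulus $a$ on compact pieces; writing $h=a\|\cdot\|^2-\gamma$ with $\gamma$ convex, criticality $0\in\partial h(x^*)$ means $2ax^*\in\partial\gamma(x^*)$, and the subgradient inequality for $\gamma$ gives $h(y^*)\leq h(x^*)+a\|y^*-x^*\|^2$ directly, which (after the trivial far-pair bound and the bi-Lipschitz transfer) suffices. This is more elementary in that it avoids curvature bounds and Toponogov altogether, at the cost of taking the Riemannian semiconcavity of $d_F$ from \cite{MM} as a black box and not producing an explicit constant. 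Two minor points: your first sketch (an ad hoc distance-comparison with $O(\dist^3)$ error) is the vague shadow of the Toponogov argument and would be hard to make rigorous without it, so your pivot to semiconcavity is the right call; and Lemma~\ref{semdist} is specific to the Euclidean $d_F$ and does not apply to $h$ in a chart---the correct input is the local semiconcavity from \cite{MM}, which you do cite at the end.
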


\begin{proof}
Choose a point $p\in K$ and set $R:=\diam K+\sup_{q\in K}\dist(q,F)$. Using a partition of unity, we can find a ($C^\infty$) smooth mapping $\varphi:M\to[0,1]$ such that $\varphi(q)=1$ if $\dist(p,q)\leq 2R$ and $\varphi(q)=0$ if $\dist(p,q)>3R$. The closed ball $\overline{B}(p,3R)$ is compact in $M$ (see \eqref{bcom}), hence, the sectional curvature is bounded on $B(p,3R)$. By \cite{Greene}*{Corollary~of~Theorem~1}, there exists a complete Riemannian metric $\hat{g}=(\hat{g}_q)_{q\in M}$ on $M$ with bounded sectional curvature. Then, if $g$ denotes the original Riemannian metric on $M$, $\tilde{g}_q:=\varphi(q)g_q+(1-\varphi(q))\hat{g}_q$, $q\in M$, defines a new Riemannan metric on $M$ which has bounded sectional curvature and agrees with $g$ on $B(p,2R)$. The distances and geodesic segments remain the same in the new metric for any two points lying in $B(p,R)$. Also, the new metric $\tilde{g}$ is complete, since it coincides with the complete metric $\hat{g}$ outside of the ball $\overline{B}(p,3R)$.

Let $x,y\in (M\setminus F)\cap K$ be critical points of $d_F$ and let $\gamma_{xy}$ be a minimizing geodesic path connecting $x$ and $y$ with $\gamma_{xy}(0)=x$. By \eqref{Grove}, there exists an $F$-segment $\gamma_{xz}$ emanating from $x$, ending in some point $z\in F$ and such that 
$\alpha:=\angle(\gamma_{xy}'(0),\gamma_{xz}'(0))\leq\frac\pi 2$.
Let $\gamma_{yz}$ be a minimizing geodesic path connecting $y$ and $z$. Note that all the three points $x,y,z$ lie in $B(p,R)$.

Let $k<0$ be a lower sectional curvature bound of $(M,\tilde{g})$ and let $H_k^2$ denote the hyperbolic plane with constant curvature $k$. Let $\tilde{x},\tilde{y},\tilde{z}$ be vertices of a ``comparison triangle'' of $x,y,z$ in $H_k^2$ (i.e., the lengths of edges connecting the corresponding points are the same). Then, by the Toponogov theorem (see \cite{ChE}*{Theorem~2.2}), the angle 
$$\tilde{\gamma}:=\angle(\tilde{y},\tilde{x},\tilde{z})\leq\alpha\leq\frac\pi 2.$$  
Denote $a:=\dist(x,z)=\dist(\tilde{x},\tilde{z})=d_F(x)$, $b:=\dist(x,y)=\dist(\tilde{x},\tilde{y})$, $c:=\dist(y,z)=\dist(\tilde{y},\tilde{z})$. Note that $a,b$ and $c$ are majorized by $2R$.
The hyperbolic law of cosines in $H_k^2$ with $\kappa:=\sqrt{-k^{-1}}$ (see \cite{BH}*{I.2.13,~p.24}) yields
$$
\cosh \frac c\kappa =\cosh \frac a\kappa\cosh \frac b\kappa - \sinh\frac a\kappa\sinh\frac b\kappa\cos\tilde{\gamma}
\leq\cosh \frac a\kappa\cosh \frac b\kappa.
$$
Using the Taylor expansion one can easily show that
\begin{equation} \label{cosh}
\cosh u-1\leq\frac{u^2}2\cosh u,\quad |u^2-v^2|\leq 2|\cosh u-\cosh v|,\quad u,v\in\R,
\end{equation}
hence
$$\cosh\frac c\kappa-\cosh\frac a\kappa\leq \left(\cosh\frac b\kappa -1\right) \cosh\frac a\kappa\leq\frac{b^2}{2\kappa^2} \cosh\frac b\kappa \cosh\frac a\kappa\leq \frac{b^2}{2\kappa^2}\cosh^2\frac{2R}\kappa.$$
From the fact that $d_F(y)\leq c$ it follows that
$$\cosh\frac{d_F(y)}{\kappa}-\cosh\frac{d_F(x)}{\kappa}\leq \frac{\dist(x,y)^2}{2\kappa^2}\cosh^2\frac{2R}\kappa,$$
and since $x$ and $y$ can be interchanged, we can even add the absolute value to the left hand side.
Using the second inequality from \eqref{cosh}, we get
$$|d_F(y)^2-d_F(x)^2|\leq C\:\dist(x,y)^2$$
with $C=\cosh^2\frac{2R}\kappa$. 
\end{proof}

Using the estimate $|d_F(x)-d_F(y)|\leq |d_F(x)^2-d_F(y)^2|/|d_F(x)+d_F(y)|$, we obtain

\begin{corollary}  \label{C_Ferry}
Let $M,F$ be as in Proposition~\ref{P1} and $z\in M\setminus F$. Then there exist a neighbourhood $V$ of $z$ in $M\setminus F$ and $D>0$ such that for any points $x,y\in V$ that are critical for $d_F$, we have
$$|d_F(x)-d_F(y)|\leq D\: \dist(x,y)^2.$$
\end{corollary}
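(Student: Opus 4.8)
The plan is to deduce the corollary from Proposition~\ref{P1} by localizing to a compact neighbourhood of $z$ on which $d_F$ is bounded away from $0$, so that passing from the quadratic estimate for $d_F^2$ to one for $d_F$ costs only a bounded multiplicative factor.

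First I would set $\delta:=d_F(z)$, which is positive since $z\in M\setminus F$ and $F$ is closed, and put $K:=\overline B(z,\delta/3)$. By bounded compactness \eqref{bcom}, $K$ is compact; and since $d_F$ is $1$-Lipschitz, every $x\in K$ satisfies $d_F(x)\geq\delta-\delta/3=2\delta/3>0$, so in particular $K\subset M\setminus F$. Hence Proposition~\ref{P1} applies to the compact set $K$ and yields a constant $C>0$ such that $|d_F(x)^2-d_F(y)^2|\leq C\,\dist(x,y)^2$ for all $x,y\in K$ that are critical for $d_F$.

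Then, for such $x,y$ one has $d_F(x)+d_F(y)\geq 4\delta/3$, so the elementary identity $|d_F(x)-d_F(y)|=|d_F(x)^2-d_F(y)^2|/(d_F(x)+d_F(y))$ gives $|d_F(x)-d_F(y)|\leq\frac{3C}{4\delta}\,\dist(x,y)^2$. Taking $V:=B(z,\delta/3)$ (an open neighbourhood of $z$, contained in $K$ and in $M\setminus F$) and $D:=3C/(4\delta)$ completes the proof.

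There is essentially no obstacle here: the statement is a routine localization of Proposition~\ref{P1}. The only points requiring (minor) care are choosing the radius of $V$ as a fixed fraction of $d_F(z)$ so that $d_F$ stays bounded below on $V$ (which controls the denominator $d_F(x)+d_F(y)$), and invoking \eqref{bcom} to ensure the closed ball $K$ is compact so that Proposition~\ref{P1} is applicable.
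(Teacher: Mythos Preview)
Your proof is correct and follows exactly the same idea as the paper: apply Proposition~\ref{P1} on a compact neighbourhood of $z$ contained in $M\setminus F$, then use the identity $|d_F(x)-d_F(y)|=|d_F(x)^2-d_F(y)^2|/(d_F(x)+d_F(y))$ together with a positive lower bound for $d_F(x)+d_F(y)$ on that neighbourhood. The paper states this in a single sentence, while you have spelled out the choice of neighbourhood and the constants explicitly.
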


Using the above inequality, we are able to obtain a weaker (non-quantitative) version of Lemma~\ref{nakouli} in the Riemannian setting.

\begin{lemma}\label{posc}
Let $M$ be a connected complete two-dimensional Riemannian manifold. Let $\emptyset \neq F \subset M$ be a closed set and $z \in M \setminus F$. Then there exists $r>0$ such that
 $d_F(\Crit(d_F) \cap \overline B(z,r))$ is a $\BT$ set.   
\end{lemma}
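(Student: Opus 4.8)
The plan is to transport the whole situation into a chart around $z$ and then repeat the argument of Lemma~\ref{nakouli}, with the Riemannian Ferry inequality (Corollary~\ref{C_Ferry}) replacing \eqref{ferryn}, Lemma~\ref{pokr} replacing Lemma~\ref{poklip}, and Lemma~\ref{obra2} used verbatim. First I would fix a chart $(U,\varphi)$ with $z\in U$ and, shrinking $U$, assume by Fact~\ref{F_bilip} that $\varphi$ is bi-Lipschitz on $U$, that $\overline U$ is compact (by \eqref{bcom}), that $\overline U\cap F=\emptyset$ (possible since $z\notin F$ and $F$ is closed), and, by \eqref{Riem_m}, that there are constants $0<c<C$ with $c^2\|u\|^2\le\tilde g_x(u,u)\le C^2\|u\|^2$ for $x\in\varphi(\overline U)$, $u\in\R^2$. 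Put $d:=d_F\circ\varphi^{-1}$; it is locally semiconcave, hence locally DC, on $\varphi(U)$, and by Definition~\ref{D_crit} and Remark~\ref{R_crit} one has $q\in\Crit(d_F)\iff\varphi(q)\in\Crit(d)$ for $q\in U$.

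Next I would verify the hypothesis of Lemma~\ref{pokr} for $d$ on $\varphi(U)$ with the uniform constant $\alpha:=c$. Given $x\in\varphi(U)$, set $p:=\varphi^{-1}(x)\in M\setminus F$ and pick an $F$-segment $\gamma$ emanating from $p$; since $d_F$ is $1$-Lipschitz and $\dist(\gamma(t),F)\le d_F(p)-t$ for small $t\ge 0$, we get $\D(d_F)(p,\gamma'(0))=-1$ (the Riemannian analogue of \eqref{dmj}; this also follows from \eqref{cosinus}). Writing $u:=D\varphi(p)\gamma'(0)\in\R^2$, so that $\gamma'(0)=D\varphi^{-1}(x)u$ and $\tilde g_x(u,u)=1$, the lower bound in \eqref{Riem_m} gives $\|u\|\le 1/c$, and the chain rule for one-sided directional derivatives yields $d'_+(x,u/\|u\|)=\|u\|^{-1}\,\D(d_F)(p,\gamma'(0))=-1/\|u\|\le -c$. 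Hence Lemma~\ref{pokr} applies: choosing $\rho>0$ with $\overline B(\varphi(z),2\rho)\subset\varphi(U)$, we obtain finitely many Lipschitz graphs $G_1,\dots,G_m\subset\R^2$ with $\Crit(d)\cap B(\varphi(z),2\rho)\subset G_1\cup\dots\cup G_m$.

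Finally I would bring in the Ferry estimate. Corollary~\ref{C_Ferry} provides a neighbourhood $V\subset M\setminus F$ of $z$ and $D_0>0$ with $|d_F(x)-d_F(y)|\le D_0\,\dist(x,y)^2$ for all $x,y\in V$ critical for $d_F$. I would then choose $r>0$ so small that $\overline B(z,r)$ is compact, $\overline B(z,r)\subset V$, and $\varphi(\overline B(z,r))\subset B(\varphi(z),\rho)$, and set $T:=\Crit(d_F)\cap\overline B(z,r)$, a compact set. Then $\varphi(T)\subset\bigcup_k G_k$, so $\varphi(T)=\bigcup_{k=1}^m(\varphi(T)\cap G_k)$, each $\varphi(T)\cap G_k$ being compact, of diameter $\le 2\rho$, and contained in a Lipschitz graph. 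Letting $L$ be a Lipschitz constant of $\varphi^{-1}$ on $\overline B(\varphi(z),\rho)$ and writing $q_i:=\varphi^{-1}(p_i)\in T\subset V$ for $p_1,p_2\in\varphi(T)\cap G_k$, Corollary~\ref{C_Ferry} gives
$$|d(p_1)-d(p_2)|=|d_F(q_1)-d_F(q_2)|\le D_0\,\dist(q_1,q_2)^2\le D_0L^2\,\|p_1-p_2\|^2,$$
so Lemma~\ref{obra2} (with $S:=G_k$, $P:=\varphi(T)\cap G_k$, $\kappa:=d|_P$, $C:=D_0L^2$) shows that $d(\varphi(T)\cap G_k)$ is a $\BT$ set. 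Since $d_F(\Crit(d_F)\cap\overline B(z,r))=d_F(T)=\bigcup_{k=1}^m d(\varphi(T)\cap G_k)$ is compact and a finite union of $\BT$ sets, Corollary~\ref{adbt} finishes the proof.

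The step I expect to be the main obstacle is the middle one: producing a \emph{uniform} rate of decrease $\alpha>0$ valid at \emph{every} point of $\varphi(U)$ — critical points of $d$ included — which is exactly what Lemma~\ref{pokr} requires. This rests on the existence of an $F$-segment at each point of $M\setminus F$ together with the metric comparison \eqref{Riem_m} controlling how the chart distorts directional derivatives (and on a precise chain rule for one-sided derivatives of $d_F\circ\varphi^{-1}$). The remaining difficulty is purely organisational: arranging the nested neighbourhoods — the chart domain, the region where $\varphi^{-1}$ is Lipschitz, the region where Corollary~\ref{C_Ferry} holds, and the ball $\overline B(z,r)$ — so that all the estimates hold simultaneously.
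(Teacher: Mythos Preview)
Your proposal is correct and follows essentially the same route as the paper's own proof: transport to a bi-Lipschitz chart, use \eqref{Riem_m} together with the existence of an $F$-segment to get the uniform decrease rate $\alpha=c$ needed for Lemma~\ref{pokr}, cover $\Crit(d)$ locally by finitely many Lipschitz graphs, feed the Riemannian Ferry estimate (Corollary~\ref{C_Ferry}) through the bi-Lipschitz constant of $\varphi^{-1}$ into Lemma~\ref{obra2}, and conclude by Corollary~\ref{adbt}. The only difference is the order in which you set up the nested neighbourhoods (you fix the chart before invoking Corollary~\ref{C_Ferry}, the paper does the reverse), which is purely cosmetic.
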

\begin{proof}
By Corollary~\ref{C_Ferry}, there exist $\delta, D>0$ such that
\begin{equation}\label{fene}
 |d_F(x) - d_F(y)| \leq D\: \dist(x,y)^2\text{ whenever }x,y \in B(z,\delta) \cap \Crit(d_F).
 \end{equation}
Choose a chart $(V, \varphi)$ and an open set $U\subset V\setminus F$ such that $z\in U$ and $\overline{U}\subset V\subset B(z,\delta)$. Due to Fact~\ref{F_bilip} we can suppose that $\vf:U \to \R^2$
 is bilipschitz; choose $\eta>0$ such that $\vf^{-1}$ is $\eta$-Lipschitz.
Denote $z^*:=\vf(z)$,  $U^*:= \vf(U)$ and $d^* := d_F \circ \vf^{-1}$.
As already noted, $d_F$ is locally semiconcave (see \cite{MM}) and, consequently, $d^*$ is locally semiconcave and thus also locally DC on $U^*$. Observe that if $x\in U$, $x^*:=\vf(x)$, $\gamma$ is an $F$-segment emanating from $x$ and $v^*:=D\varphi(x)(\gamma'(0))$ then
$$(d^*)'_+(x^*,v^*)=\D (d_F)(x,\gamma'(0))=-1.$$
Due to \eqref{Riem_m} there exists $c>0$ (independent of $x\in U$) such that 
$$\|v^*\|^2\leq c^{-2}g_x(\gamma'(0),\gamma'(0))=c^{-2}$$ 
and, hence, $(d^*)'_+(x^*,v^*/\|v^*\|)\leq -c$.
By Lemma \ref{pokr} there exist $\rho>0$ and
 Lipschitz graphs $P_1,\dots, P_s$ in $\R^2$
 such that $B(z^*,\rho) \subset U^*$ and  $\Crit(d^*) \cap B(z^*,\rho) \subset  (P_1\cup\dots\cup P_s)$.
 
Choose $r>0$ such that $B:= \overline{B}(z,r)\subset \vf^{-1}(B(z^*, \rho))$, and set $B^*:= \vf(B)$. 
By Definition~\ref{D_crit}, $\Crit (d_F) \cap B = \vf^{-1}(\Crit(d^*) \cap B^*)$ and, therefore,
\begin{equation}\label{rozls}
d_F(\Crit(d_F) \cap B) = d^*(\Crit(d^*) \cap B^*) 
= \bigcup_{i=1}^s d^*(\Crit(d^*) \cap B^* \cap P_i). 
\end{equation}
Now fix $1\leq i \leq s$ and set $S:=P_i$, $P:= \Crit(d^*) \cap P_i \cap B^*$ and $\kappa:= d^*\restriction_P$. Then $P\subset S$ is compact
 and $\diam P \leq 2 \rho$. 
 For each $p_1,\, p_2 \in P$ we have  $\vf^{-1}(p_1), \vf^{-1}(p_2) \in B(z,\delta) \cap \Crit(d_F)$ and, hence, \eqref{fene} yields
  \begin{multline*}
   |d^*(p_1) - d^*(p_2)| = |d_F(\vf^{-1}(p_1)) - d_F(\vf^{-1}(p_2))|\\
    \leq D \|\vf^{-1}(p_1)- \vf^{-1}(p_2)\|^2
   \leq D \eta^2 \|p_1-p_2\|^2.
 \end{multline*}
Applying now Lemma \ref{obra2} to $\kappa$, $P$, $S$ and $C:=D \eta^2$, we obtain that $\kappa(P)=d^*(P)$ is a $\BT$ set. The assertion of the lemma follows by \eqref{rozls} and Corollary~\ref{adbt}.
\end{proof}

As in the Euclidean case, if $\emptyset\neq F\subset M$ is a closed subset, we write $L_F$ ($T_F$) for the set of all
 $r>0$ for which $S_r=\{x\in M:\, d_F(x)=r\}$ is nonempty and it is not a topological (Lipschitz, resp.) $1$-dimensional manifold. Also, the inclusions
\begin{equation} \label{tlcv_riem}
T_F\subset L_F\subset \cv(d_F)
\end{equation}
hold. The first one is obvious, and the second one can be seen as follows. If $r>0$ is not a critical  value of $d_F$ and $(U,\vf)$ a bi-Lipschitz chart of $M$ (cf.\ Fact~\ref{F_bilip}) then $r$ is not a critical value of $d^*:=d_F\circ\vf^{-1}$ and the Clarke's implicit function theorem (cf. \cite{Fu}*{Theorem~3.1}) implies that $(d^*)^{-1}(r)=\vf(S_r\cap U)$ is empty or a $1$-dimensional Lipschitz manifold and, hence, so is $S_r\cap U$. Thus $r\not\in L_F$.

We are now able to prove Theorem~\ref{proko}. 

\begin{proof}[Proof of Theorem~\ref{proko}]
For each $z \in M \setminus F$, we choose by Lemma \ref{posc} $r(z)>0$ such that
 $d_F(\Crit(d_F) \cap \overline B (z,r(z)))$ is a $\BT$ set. The set $C:= (d_F)^{-1}([\ep,K])$ is clearly bounded and closed, and consequently compact (see \eqref{bcom}). Consequently we can choose points
 $z_1,\dots, z_p \in C$ such that  $C \subset  \bigcup_{i=1}^p  \overline B (z_i,r(z_i))$.
 Therefore
$$ d_F(\Crit(d_F) \cap C) \subset \bigcup_{i=1}^p  d_F(\Crit(d_F) \cap \overline B (z_i,r(z_i))).$$
Since  $\cv (d_F) \cap [\ep,K] = d_F(\Crit(d_F) \cap C)$ is compact, $\cv (d_F) \cap [\ep,K]$
is a $\BT$ set by  Corollary \ref{adbt}.
\end{proof}

A corollary for general closed  sets is rather straightforward.

\begin{theorem}\label{prouz}
 If $X$ is a connected complete two-dimensional Riemannian manifold  and $\emptyset \neq F\subset X$ a closed set,
 then both $\cv(d_F)$ and $L_F$ can be covered by countably many  $\BT$ sets. In particular, 
 $\cv(d_F)$ is a countable union of sets of zero $1/2$-dimensional Minkowski content
 and so $\dim_P(\cv(d_F)) \leq 1/2$.
\end{theorem}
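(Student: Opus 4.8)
The plan is to reduce the general closed case to the compact case settled in Theorem~\ref{proko}, by the same localization that turns Theorem~\ref{odrn} into Theorem~\ref{euprouz} in the Euclidean setting; here the role of ``closed balls in $\R^2$ are compact'' is taken over by bounded compactness \eqref{bcom}. We may assume $F\neq X$ (otherwise $d_F\equiv 0$ and $\cv(d_F)=\{0\}$). Fix a base point $o\in X$ and set $F_n:=F\cap\overline B(o,n)$ for $n\in\N$; by \eqref{bcom} each $F_n$ is compact, and $F_n$ is a nonempty proper subset of $X$ once $n$ is large. The heart of the matter is the inclusion
$$\cv(d_F)\subset\{0\}\cup\bigcup_{n}\cv(d_{F_n}).$$
To prove it, take $r\in\cv(d_F)$ with $r>0$ and $x\in\Crit(d_F)$ with $d_F(x)=r$. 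Put $\sigma:=2r$; then $B(x,\sigma)\cap F\neq\emptyset$, so Lemma~\ref{reduk} gives $d_F=d_{F\cap\overline B(x,3\sigma)}$ on $B(x,\sigma)$. Choosing $n\ge\dist(o,x)+3\sigma$ we have $F\cap\overline B(x,3\sigma)\subset F_n\subset F$, hence $d_{F_n}=d_F$ on the neighbourhood $B(x,\sigma)$ of $x$. Since criticality is detected through the Clarke subdifferential in a chart, which depends only on the germ of the function (Definition~\ref{D_crit}, Remark~\ref{R_crit}), it follows that $x\in\Crit(d_{F_n})$, while $F_n\subset F$ forces $d_{F_n}(x)=d_F(x)=r$; thus $r\in\cv(d_{F_n})$.

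With this inclusion in hand, write $(0,\infty)=\bigcup_{m\ge 2}[\tfrac1m,m]$. For each large $n$, Theorem~\ref{proko} applied to the compact set $F_n$ (with $\ep=\tfrac1m$, $K=m$) shows that $\cv(d_{F_n})\cap[\tfrac1m,m]$ is a $\BT$ set; adding the trivial $\BT$ set $\{0\}$, every $\cv(d_{F_n})$, and therefore $\cv(d_F)$, is covered by countably many $\BT$ sets. By \eqref{tlcv_riem}, $L_F$ (and $T_F$) lies in $\cv(d_F)$ and so is covered by the same countable family. Finally, if $\cv(d_F)\subset\bigcup_k K_k$ with each $K_k$ a $\BT$ set, then $\cv(d_F)=\bigcup_k(\cv(d_F)\cap K_k)$, each piece is bounded and, by monotonicity of Minkowski content together with \eqref{nulmin}, has zero $1/2$-dimensional Minkowski content; hence $\overline{\dim}_M(\cv(d_F)\cap K_k)\le\tfrac12$, and countable stability of the packing dimension together with \eqref{nedi} yields $\dim_P(\cv(d_F))\le\tfrac12$.

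I do not expect a genuine obstacle: the argument is essentially the bookkeeping of the proof of Theorem~\ref{euprouz}. The only step needing a little care is the locality claim that replacing $F$ by $F_n$ leaves unchanged the critical points whose critical value lies in a fixed bounded range; this is exactly what Lemma~\ref{reduk} provides, once combined with the fact that the chart-based Definition~\ref{D_crit} of a critical point is local in nature.
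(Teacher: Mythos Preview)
Your proof is correct and follows essentially the same approach as the paper: reduce to the compact case via Lemma~\ref{reduk} and bounded compactness \eqref{bcom}, then apply Theorem~\ref{proko} on each $[\tfrac1m,m]$. You spell out the locality argument for the inclusion $\cv(d_F)\subset\{0\}\cup\bigcup_n\cv(d_{F_n})$ more carefully than the paper does, which is fine; the remaining steps (covering $L_F$ via \eqref{tlcv_riem}, and deducing the packing-dimension bound from \eqref{nulmin} and the definition \eqref{dimp}) are identical in spirit.
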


\begin{proof}
Assume first that $F$ is compact. Then the assertion follows from Theorem \ref{proko} since
$$\cv(d_F)=\{0\}\cup\bigcup_{m=1}^\infty (\cv(d_F)\cap[\frac 1m,m]).$$
For a closed (noncompact) set $F$ we get using Lemma \ref{reduk} that, for any fixed $a\in X$,
$$ \cv(d_F) \subset  \bigcup_{n=1}^{\infty} \cv(d_{F\cap \overline{B}(a,n)}),$$
and since any bounded and closed subset of $M$ is compact (cf.\ \eqref{bcom}), the proof of the main statement is finished.

The rest follows by \eqref{nuldim} and \eqref{dimp}.
\end{proof}

\begin{remark}\label{inde2}
 We have proved (see \eqref{nuldim}), by different arguments, the result of \cite{RZ1} that
$\cal \cal H^{1/2}(\cv(d_F))=0$. 
\end{remark}

\begin{bibdiv}
\begin{biblist}

\bib{BN}{article}{
   author={Bates, S. M.},
   author={Norton, A.},
   title={On sets of critical values in the real line},
   journal={Duke Math. J.},
   volume={83},
   date={1996},
   number={2},
   pages={399--413},
   issn={0012-7094},
   review={\MR{1390652}},
   doi={10.1215/S0012-7094-96-08313-1},
}

\bib{BT}{article}{
   author={Besicovitch, A. S.},
   author={Taylor, S. J.},
   title={On the complementary intervals of a linear closed set of zero
   Lebesgue measure},
   journal={J. London Math. Soc.},
   volume={29},
   date={1954},
   pages={449--459},
   issn={0024-6107},
   review={\MR{0064849}},
   doi={10.1112/jlms/s1-29.4.449},
}

\bib{BMO}{article}{
   author={Blokh, Alexander},
   author={Misiurewicz, Micha\l },
   author={Oversteegen, Lex},
   title={Sets of constant distance from a compact set in 2-manifolds with a
   geodesic metric},
   journal={Proc. Amer. Math. Soc.},
   volume={137},
   date={2009},
   number={2},
   pages={733--743},
   issn={0002-9939},
   review={\MR{2448596}},
   doi={10.1090/S0002-9939-08-09502-6},
}

\bib{BZ}{book}{
   author={Borwein, Jonathan M.},
   author={Zhu, Qiji J.},
   title={Techniques of variational analysis},
   series={CMS Books in Mathematics/Ouvrages de Math\'{e}matiques de la SMC},
   volume={20},
   publisher={Springer-Verlag, New York},
   date={2005},
   pages={vi+362},
   isbn={0-387-24298-8},
   review={\MR{2144010}},
}

\bib{BH}{book}{
   author={Bridson, Martin R.},
   author={Haefliger, Andr\'{e}},
   title={Metric spaces of non-positive curvature},
   series={Grundlehren der Mathematischen Wissenschaften [Fundamental
   Principles of Mathematical Sciences]},
   volume={319},
   publisher={Springer-Verlag, Berlin},
   date={1999},
   pages={xxii+643},
   isbn={3-540-64324-9},
   review={\MR{1744486}},
   doi={10.1007/978-3-662-12494-9},
}

\bib{Br}{book}{
   author={Bromwich, Thomas John l'Anson},
   title={An introduction to the theory of infinite series},
   edition={3},
   publisher={Chelsea, New York},
   date={1991},
   pages={535},
}

\bib{BBI01}{book}{
   author={Burago, Dmitri},
   author={Burago, Yuri},
   author={Ivanov, Sergei},
   title={A course in metric geometry},
   series={Graduate Studies in Mathematics},
   volume={33},
   publisher={American Mathematical Society, Providence, RI},
   date={2001},
   pages={xiv+415},
   isbn={0-8218-2129-6},
   review={\MR{1835418}},
   doi={10.1090/gsm/033},
}

\bib{BGP92}{article}{
   author={Burago, Yu.},
   author={Gromov, M.},
   author={Perel\cprime man, G.},
   title={A. D. Aleksandrov spaces with curvatures bounded below},
   language={Russian, with Russian summary},
   journal={Uspekhi Mat. Nauk},
   volume={47},
   date={1992},
   number={2(284)},
   pages={3--51, 222},
   issn={0042-1316},
   translation={
      journal={Russian Math. Surveys},
      volume={47},
      date={1992},
      number={2},
      pages={1--58},
      issn={0036-0279},
   },
   review={\MR{1185284}},
   doi={10.1070/RM1992v047n02ABEH000877},
}

\bib{ChE}{book}{
   author={Cheeger, Jeff},
   author={Ebin, David G.},
   title={Comparison theorems in Riemannian geometry},
   note={North-Holland Mathematical Library, Vol. 9},
   publisher={North-Holland Publishing Co., Amsterdam-Oxford; American
   Elsevier Publishing Co., Inc., New York},
   date={1975},
   pages={viii+174},
   review={\MR{0458335}},
}

\bib{Cl}{book}{
   author={Clarke, F. H.},
   title={Optimization and nonsmooth analysis},
   series={Classics in Applied Mathematics},
   volume={5},
   edition={2},
   publisher={Society for Industrial and Applied Mathematics (SIAM),
   Philadelphia, PA},
   date={1990},
   pages={xii+308},
   isbn={0-89871-256-4},
   review={\MR{1058436}},
   doi={10.1137/1.9781611971309},
}

\bib{Fa}{book}{
   author={Falconer, Kenneth},
   title={Fractal geometry},
   edition={2},
   note={Mathematical foundations and applications},
   publisher={John Wiley \& Sons, Inc., Hoboken, NJ},
   date={2003},
   pages={xxviii+337},
   isbn={0-470-84861-8},
   review={\MR{2118797}},
   doi={10.1002/0470013850},
}

\bib{Fa2}{book}{
   author={Falconer, Kenneth},
   title={Techniques in fractal geometry},
   publisher={John Wiley \& Sons, Ltd., Chichester},
   date={1997},
   pages={xviii+256},
   isbn={0-471-95724-0},
   review={\MR{1449135}},
}

\bib{Fe}{article}{
   author={Ferry, Steve},
   title={When $\epsilon $-boundaries are manifolds},
   journal={Fund. Math.},
   volume={90},
   date={1975/76},
   number={3},
   pages={199--210},
   issn={0016-2736},
   review={\MR{0413112}},
   doi={10.4064/fm-90-3-199-210},
}

\bib{Fu}{article}{
   author={Fu, Joseph Howland Guthrie},
   title={Tubular neighborhoods in Euclidean spaces},
   journal={Duke Math. J.},
   volume={52},
   date={1985},
   number={4},
   pages={1025--1046},
   issn={0012-7094},
   review={\MR{816398}},
   doi={10.1215/S0012-7094-85-05254-8},
}

\bib{Greene}{article}{
   author={Greene, R. E.},
   title={Complete metrics of bounded curvature on noncompact manifolds},
   journal={Arch. Math. (Basel)},
   volume={31},
   date={1978/79},
   number={1},
   pages={89--95},
   issn={0003-889X},
   review={\MR{510080}},
   doi={10.1007/BF01226419},
}

\bib{Grove}{article}{
   author={Grove, Karsten},
   title={Critical point theory for distance functions},
   conference={
      title={Differential geometry: Riemannian geometry},
      address={Los Angeles, CA},
      date={1990},
   },
   book={
      series={Proc. Sympos. Pure Math.},
      volume={54},
      publisher={Amer. Math. Soc., Providence, RI},
   },
   date={1993},
   pages={357--385},
   review={\MR{1216630}},
}

\bib{Ha}{article}{
   author={Hawkes, John},
   title={Hausdorff measure, entropy, and the independence of small sets},
   journal={Proc. London Math. Soc. (3)},
   volume={28},
   date={1974},
   pages={700--724},
   issn={0024-6115},
   review={\MR{0352412}},
   doi={10.1112/plms/s3-28.4.700},
}

\bib{Ka}{article}{
   author={Kaufman, Robert},
   title={Representation of linear sets as critical sets},
   journal={Proc. Amer. Math. Soc.},
   volume={25},
   date={1970},
   pages={884--889},
   issn={0002-9939},
   review={\MR{0260940}},
   doi={10.2307/2036772},
}

\bib{Ko}{article}{
   author={Kosi\'{n}ski, A.},
   title={A proof of an Auerbach-Banach-Mazur-Ulam theorem on convex bodies},
   journal={Colloq. Math.},
   volume={4},
   date={1957},
   pages={216--218},
   issn={0010-1354},
   review={\MR{0086324}},
   doi={10.4064/cm-4-2-216-218},
}

\bib{MM}{article}{
   author={Mantegazza, Carlo},
   author={Mennucci, Andrea Carlo},
   title={Hamilton-Jacobi equations and distance functions on Riemannian
   manifolds},
   journal={Appl. Math. Optim.},
   volume={47},
   date={2003},
   number={1},
   pages={1--25},
   issn={0095-4616},
   review={\MR{1941909}},
   doi={10.1007/s00245-002-0736-4},
}

\bib{Ma}{book}{
   author={Mattila, Pertti},
   title={Geometry of sets and measures in Euclidean spaces},
   series={Cambridge Studies in Advanced Mathematics},
   volume={44},
   note={Fractals and rectifiability},
   publisher={Cambridge University Press, Cambridge},
   date={1995},
   pages={xii+343},
   isbn={0-521-46576-1},
   isbn={0-521-65595-1},
   review={\MR{1333890}},
   doi={10.1017/CBO9780511623813},
}

\bib{Petersen}{book}{
   author={Petersen, Peter},
   title={Riemannian geometry},
   series={Graduate Texts in Mathematics},
   volume={171},
   edition={3},
   publisher={Springer, Cham},
   date={2016},
   pages={xviii+499},
   isbn={978-3-319-26652-7},
   isbn={978-3-319-26654-1},
   review={\MR{3469435}},
   doi={10.1007/978-3-319-26654-1},
}

\bib{Plaut}{article}{
   author={Plaut, Conrad},
   title={Metric spaces of curvature $\geq k$},
   conference={
      title={Handbook of geometric topology},
   },
   book={
      publisher={North-Holland, Amsterdam},
   },
   date={2002},
   pages={819--898},
   review={\MR{1886682}},
}

\bib{RZ1}{article}{
   author={Rataj, Jan},
   author={Zaj\'{i}\v{c}ek, Lud\v{e}k},
   title={Critical values and level sets of distance functions in
   Riemannian, Alexandrov and Minkowski spaces},
   journal={Houston J. Math.},
   volume={38},
   date={2012},
   number={2},
   pages={445--467},
   issn={0362-1588},
   review={\MR{2954647}},
}

\bib{RZ3}{article}{
   author={Rataj, Jan},
   author={Zaj\'{i}\v{c}ek, Lud\v{e}k},
   title={On the structure of sets with positive reach},
   journal={Math. Nachr.},
   volume={290},
   date={2017},
   number={11-12},
   pages={1806--1829},
   issn={0025-584X},
   review={\MR{3683461}},
   doi={10.1002/mana.201600237},
}

\bib{RZ2}{article}{
   author={Rataj, Jan},
   author={Zaj\'{i}\v{c}ek, Lud\v{e}k},
   title={Properties of distance functions on convex surfaces and
   applications},
   journal={Czechoslovak Math. J.},
   volume={61(136)},
   date={2011},
   number={1},
   pages={247--269},
   issn={0011-4642},
   review={\MR{2782772}},
   doi={10.1007/s10587-011-0010-5},
}

\bib{WZ}{book}{
   author={Wheeden, Richard L.},
   author={Zygmund, Antoni},
   title={Measure and integral},
   note={An introduction to real analysis;
   Pure and Applied Mathematics, Vol. 43},
   publisher={Marcel Dekker, Inc., New York-Basel},
   date={1977},
   pages={x+274},
   isbn={0-8247-6499-4},
   review={\MR{0492146}},
}
\end{biblist}
\end{bibdiv}

\end{document}